\definecolor{forestgreen(traditional)}{rgb}{0.0, 0.27, 0.13}
\definecolor{forestgreen(web)}{rgb}{0.13, 0.55, 0.13}
\definecolor{green(html/cssgreen)}{rgb}{0.0, 0.5, 0.0}
\numberwithin{equation}{section}
\newtheorem{theorem}{Theorem}[section]
\newtheorem{dfn}[theorem]{Definition}
\newtheorem{prp}[theorem]{Proposition}
\newtheorem{lemma}[theorem]{Lemma}
\newtheorem{Corollary}[theorem]{Corollary}
\newtheorem{rmk}[theorem]{Remark}
\newcommand{\R}{\mathbb R}
\newcommand{\rd}{\mathbb{R}^d}
\newcommand{\n}{\Vert}
\newcommand{\C}{\mathbb C}
\newcommand{\K}{\widetilde{K}}
\newcommand{\la}{\mathcal{L}_a}
\newcommand{\hd}{\dot{H}^1(\rd)}
\newcommand{\ha}{\dot{H}_a^1(\rd)}
\newcommand{\F}{\mathcal{F}}
\newcolumntype{P}[1]{>{\centering\arraybackslash}p{#1}}
\newcolumntype{M}[1]{>{\centering\arraybackslash}m{#1}}
\newcommand{\abs}[1]{\left\vert#1\right\vert}
\begin{document}
\baselineskip16pt
\title[Inhomogeneous Strichartz Estimates]{Strichartz Estimates for  Schr{\"o}dinger equation with singular and time dependent Potentials and Application to NLS equations}

\author{Saikatul Haque}
\address{TIFR - Centre for Applicable Mathematics\\
Bengaluru-560065, India}
\email{saikatul@tifrbng.res.in}
\subjclass[2010]{Primary: 35Q41, 35Q55; Secondary: 35P25, 35Q40, 47J35}
\keywords{Strichartz Estimates, Inhomogeneous Strichartz Estimates, Non-admissible pairs, Inverse square and Lorentz potentials, Time dependent potentials}

\maketitle
\begin{abstract}  
We establish inhomogeneous Strichartz Estimates for the Schr{\"o}dinger equation with singular and time-dependent potentials for some non-admissible pairs. Our work extends the results of Vilela \cite{vilela2007inhomogeneous} and Foschi \cite{foschi2005inhomogeneous}, where they proved the results in the absence of potential. It also extends the works of Pierfelice \cite{pierfelice2006strichartz} and Burq, Planchon, Stalker, Tahvildar-Zadeh \cite{burq2003strichartz}, who proved the estimates for admissible pairs. We also extend the recent work of Mizutani, Zhang, Zheng \cite{mizutani2020uniform}, and as an application of it, we improve the stability result of Kenig-Merle \cite{kenig2006global}, which in turn establishes a proof (alternative to \cite{yang2020scattering0}) of the existence of scattering solution for the energy-critical focusing NLS with inverse-square potentials. 
\end{abstract}


\section{Introduction}
Let us consider the following Cauchy problem for the Schr{\"o}dinger equation
\begin{equation}\label{cp}
i\partial_t u+\Delta u+Vu=F\ \text{in }\R\times\rd,\quad
u(0,\cdot)=0\ \text{on }\rd
\end{equation} 
where $u:\R\times\rd\to\C$ is the unknown, $V:\rd\times\R\to\R$ is a real-valued potential and $F:\R\times\rd\to\C$ 
is a given function. 
This equation plays an important role in quantum mechanics and has been studied extensively  when $V=0$, see \cite{strichartz1977restrictions,yajima1987existence,cazenave1988cauchy,keel1998endpoint,ginibre1985global}. 
In this case, since the operator $\Delta$ is self-adjoint in $H^s(\rd)$ for all $s\in\R$, 
by semi-group theory, the existence of the unique solution $e^{it\Delta}f$ of the corresponding {\it homogeneous problem}
\begin{equation}\label{cp0}
i\partial_t v+\Delta v=0\ \text{in }\R\times\rd,\quad
v(0,\cdot)=f\ \text{on }\rd
\end{equation} 
 is ensured for $f\in H^s(\rd)$ for all $s\in\R$, in particular for $f\in L^2(\rd)$. 
Note that, applying Fourier transform $\F$ to \eqref{cp0}, 
and solving the resulting ODE for $\F v$ one finds $\F v(t)=e^{-4\pi^2it|\xi|^2}\F f$. Then taking 
inverse Fourier transform, it follows that $ e^{it\Delta}f=v(t)$ is given by $e^{it\Delta}f=M_tD_t\mathcal{F}M_tf$ for $t\neq0$, where $M_t w=e^{i|\cdot|^2/4t}w$, $D_tw={(4\pi it)^{-d/2}}w \left({\cdot}/{4\pi t}\right)$, see \cite[Remark 2.2.5]{cazenave2003semilinear}. This formula suggests that the operators $e^{it\Delta}$, $t\neq0$ has a lot of similarities with the Fourier transform operator $\F$. In fact it turns out that, $e^{it\Delta}f$ satisfies the $L^\infty$-$L^1$ estimates, called the {\it dispersive estimate} 
\begin{equation}\label{dp}
\left\n e^{it\Delta}f\right\n_{L^\infty}\lesssim t^{-d/2}\n f\n_{L^1},\quad t\neq0,
\end{equation} which can be seen as a variant 
of the estimate $\n\F f\n_{L^\infty}\lesssim\n f\n_{L^1}$. Using the dispersive estimate \eqref{dp}, the {\it inhomogeneous Strichartz estimate}
 \begin{equation}\label{str}
\left\n u\right\n_{L^qL^r}\lesssim\n F\n_{L^{\tilde{q}'}L^{\tilde{r}'}}
\end{equation}
 is established for admissible pairs $(q,r),(\tilde{q},\tilde{r})$ with $q,\tilde{q}\neq2$. From the special case $(q,r)=(\tilde{q},\tilde{r})$, using a duality argument the {\it homogeneous Strichartz estimate}
  \begin{equation}\label{str0}
\left\n e^{it\Delta}f\right\n_{L^qL^r}\lesssim\n f\n_{L^2}
\end{equation} is derived for admissible pair  $(q,r)$ with $q\neq2$, see \cite[Chapter 4]{linares2014introduction} for details. A standard scaling argument shows that
\begin{equation}
\label{adm}
\frac{2}{q}+\frac{d}{r}=\frac{d}{2}
\end{equation}  is necessary for the validity of the estimate \eqref{str0}. Recall that 
 a pair of exponent $(q,r)$ is called an {\it admissible pair} if $q,r\geq2$, $(q,r,d)\neq(2,\infty,2)$ and the necessary condition \eqref{adm} is satisfied. 

The Strichartz estimates have many applications in solving non-linear Schr{\"o}dinger (NLS) equations with various kinds of nonlinearities, e.g. power-type nonlinearity (i.e. $F=\pm|u|^\alpha u$), Hartree-type non-linearity (i.e. $F=\pm(|x|^{-\gamma}\ast|u|^2)u$).      Motivated by these non-linear problems (see also e.g. \eqref{61}, in Section \ref{s4}), our focus in this work is to establish Strichartz estimates for solutions to \eqref{cp} involving some wide class of space-time spaces.

 The inequalities \eqref{str} and \eqref{str0} go back to 1977, when Strichartz  \cite{strichartz1977restrictions} proved the special case $q=\tilde{q}=r=\tilde{r}={2(d+2)}/{d}$ as a Fourier restriction Theorem. Later Ginibre-Velo \cite{ginibre1985global} in 1985, 
Yajima \cite{yajima1987existence} in 1987 and Cazenave, Weissler \cite{cazenave1988cauchy} in 1988 proved \eqref{str}, \eqref{str0} assuming $(q,r)$, $(\tilde{q},\tilde{r})$ are admissible pairs and $q\neq2,\tilde{q}\neq2$.  The remaining case for admissible pairs $(q,r)$, $(\tilde{q},\tilde{r})$ i.e. when at least one of $q,\tilde{q}$ is $2$ (the endpoint case)  is due to Keel and Tao  \cite{keel1998endpoint} where they proved \eqref{str}, \eqref{str0} for more general settings in the year 1998.

Let us now concentrate on the {\it inhomogeneous estimate} i.e. \eqref{str}. 
Again by a rescaling argument, $q,\tilde{q},r,\tilde{r}$ must satisfy
\begin{equation}\label{sc}
\frac{2}{q}+\frac{2}{\tilde{q}}+d\left(\frac{1}{r}+\frac{1}{\tilde{r}}\right)=d
\end{equation}
whenever \eqref{str} holds. Note that the relation \eqref{sc} is satisfied for many choices of $q,\tilde{q},r,\tilde{r}$ apart from those for which $(q,r),(\tilde{q},\tilde{r})$ are admissible pairs.  This indicates the possibility of \eqref{str} being true for non-admissible pairs. 
In fact, for non-admissible pairs, various authors including Cazenave, Weissler \cite{cazenave1992rapidly} in 1992, Kato \cite{kato1994q} in 1994, Foschi \cite{foschi2005inhomogeneous} in 2005, Vilela \cite{vilela2007inhomogeneous} in 2007, Koh \cite{koh2011improved} in 2011, proved the inequality \eqref{str} for $q,r,\tilde{q},\tilde{r}$ satisfying \eqref{sc} and other restrictions. But the problem of finding all possible exponents satisfying the estimate \eqref{str}, is still open.

Now one \textbf{question} arises: what happens  when $V$ is non-zero? Assume that $(q,r),(\tilde{q},\tilde{r})$ are admissible pairs. 
 It follows from \cite[Theorem 1.2]{keel1998endpoint} that, for any self-adjoint operator $H$ in $L^2(\rd)$ satisfying the dispersive estimate
 \begin{equation}\label{dp0}
\left\n e^{itH}f\right\n_{L^\infty}\lesssim t^{-d/2}\n f\n_{L^1},\quad t\neq0,
\end{equation} 
 the Strichartz estimates \begin{equation}\label{str1}
 \left\n e^{itH}f\right\n_{L^qL^r}\lesssim\n f\n_{L^2}
 \end{equation} and \eqref{str} hold for $u$ satisfying 
 \begin{equation}\label{cp1}
 i\partial_t u+Hu=F\ \text{in }\R\times\rd,\quad u(0,\cdot)=0\ \text{on }\rd.
 \end{equation}
 Therefore, to have the Strichartz estimates \eqref{str1}, \eqref{str} for a solution to \eqref{cp1}, it is sufficient to have inequality \eqref{dp0}. Let us consider 
the case when $H$ has the particular form 
$H=\Delta+V$ where $V:\rd\to\R$ is a given function. This case is intensively studied, 
e.g. if the positive part of $V$ is not too large, then it has been shown that $H$ is self-adjoint, see Kato \cite{kato2013perturbation}. Schonbek \cite{schonbek1979decay} showed if $\n V\n_{L^1\cap L^\infty}$ is sufficiently small then the estimate \eqref{dp0} holds. It was also proved that  if $V\in C^\infty(\rd)$ is non-positive and $D^\alpha V\in L^\infty(\rd)$ for all $\alpha\geq2$ then \eqref{dp0} holds, see e.g. Fujiwara \cite{fujiwara1979construction}, Weinstein \cite{weinstein1985symbol}, Zelditch \cite{zelditch1983reconstruction} and Oh \cite{oh1988existence}.

In this work, we consider potentials $V$ in  $L^{d/2,\infty}(\rd)$ (and in $L^\infty(\R,L^{d/2,\infty}(\rd))$). Note that these $V$'s need not fall in the previous categories and hence the validity of the dispersive estimate \eqref{dp0} is not ensured.  
Therefore we possibly need a different kind of machinery to deal with such potentials.  For time-independent $V$, the operator $\Delta+V$ is self-adjoint  in $L^2(\rd)$ (via Friedrich’s extension)  
for the cases: 
\begin{itemize}
\item[(i)] $V$ is of the form $a/|x|^2$ with $a<(d-2)^2/4$ for $d\geq3$, by Hardy inequality, (or see \cite[Introduction]{killip2017energy}),
\item[(ii)] $V$ with sufficiently small $\n V\n_{L^{d/2,\infty}}$, see \cite[Section 2]{pierfelice2006strichartz}.
\end{itemize}
The case (i) above was studied by Burq--Planchon--Stalker--Tahvidar-Zadeh \cite{burq2003strichartz} in 2003.  Using spherical harmonics and Hankel transforms the authors established the estimates \eqref{str} and  \eqref{str1}  for admissible pairs $(q,r),(\tilde{q},\tilde{r})$.
On the other hand, the case (ii) above was considered by Pierfelice \cite{pierfelice2006strichartz} in 2006 to prove the inhomogeneous estimate\begin{align}
\left\n u\right\n_{L^qL^{r,2}}&\lesssim\n F\n_{L^{\tilde{q}'}L^{\tilde{r}',2}} \label{37}
\end{align} for admissible pairs $(q,r),(\tilde{q},\tilde{r})$.
Note that by Calder\'on's result i.e. Lemma \ref{cld} below, it follows that, \eqref{37} is stronger than \eqref{str} for $2\leq r,\tilde{r}<\infty$, see Corollary \ref{cld1} below.
The author in \cite{pierfelice2006strichartz} also presented proof of the existence of a solution to \eqref{cp} for time-dependent potentials via fixed point argument. A similar problem was studied by Bouclet and Mizutani \cite{bouclet2018uniform} in 2018, where the authors provided estimates, for potentials in Morrey-Campanato spaces. 

Here we would like to ask another \textbf{question}: what happens when the exponents  $q,r,\tilde{q},\tilde{r}$ are such that $(q,r),(\tilde{q},\tilde{r})$ are not admissible pairs? 
The only result according to our knowledge, answering the above two questions is the very recent (in 2020) work of Mizutani, Zhang, Zheng \cite{mizutani2020uniform}, where they proved the inhomogeneous Strichartz estimate \eqref{37}, for some non-admissible pairs in the case (i) above, see Theorem \ref{56} (ii) below. 
The case (ii) above is completely open for non-admissible pairs according to our knowledge. In this article, we establish the inhomogeneous estimate \eqref{37} for $V$ satisfying the case (ii) above, with appropriate exponents $1\leq q,\tilde{q},r,\tilde{r}\leq\infty$ for which $(q,r),(\tilde{q},\tilde{r})$ need not be admissible, see Theorems \ref{1}, \ref{1'}. 

 To achieve these 
 estimates, first, we improve the result of Vilela \cite[Theorem 2.4]{vilela2007inhomogeneous}. We would like to point out that the author in \cite{vilela2007inhomogeneous} proved the estimate \eqref{str} in the zero potential case, whereas we in the following result establish the stronger estimate \eqref{37}:
 
\begin{theorem}\label{3'}
Let $V=0$ and $(q,r),(\tilde{q},\tilde{r})$ satisfy \eqref{sc}, $r,\tilde{r}>2$ along with
\begin{equation}\label{2}
\frac{d-2}{d}<\frac{r}{\tilde{r}}<\frac{d}{d-2},\quad 
\textcolor{black}{q,\tilde{q}\geq2,\quad}\quad\\
\begin{cases}
\frac{d}{2}\left(\frac{1}{r}-\frac{1}{\tilde{r}}\right)<\frac{1}{\tilde{q}}\quad\text{if }r\leq\tilde{r}\\
\frac{d}{2}\left(\frac{1}{\tilde{r}}-\frac{1}{r}\right)<\frac{1}{q}\quad\text{if }\tilde{r}\leq r.
\end{cases}
\end{equation} Then the inhomogeneous Strichartz estimate \eqref{37} holds for a solution $u$ to \eqref{cp}.
\end{theorem}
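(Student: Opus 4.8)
The plan is to deduce the Lorentz-refined estimate \eqref{37} from the Lebesgue estimate \eqref{str} by real interpolation, exploiting the bilinear structure of the inhomogeneous estimate. Concretely, write the solution operator as the bilinear form
\begin{equation*}
T(F,G)=\int_{\R}\int_{\R}\langle e^{i(t-s)\Delta}F(s),G(t)\rangle\,\mathbf{1}_{\{s<t\}}\,ds\,dt,
\end{equation*}
so that \eqref{37} is equivalent to the boundedness $T:L^{\tilde q'}L^{\tilde r',2}\times L^{q'}L^{r',2}\to\C$, and similarly \eqref{str} corresponds to $T:L^{\tilde q'}L^{\tilde r'}\times L^{q'}L^{r'}\to\C$. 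First I would invoke the theorem of Vilela \cite[Theorem 2.4]{vilela2007inhomogeneous} (equivalently Foschi \cite{foschi2005inhomogeneous}) to get \eqref{str} on an open neighborhood, in $(1/r,1/\tilde r,1/q,1/\tilde q)$-parameter space, of the target point — here the strict inequalities in \eqref{2} are exactly what provide the room to move the Lebesgue exponents. The scaling identity \eqref{sc} is preserved along the relevant segments.

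The key step is a bilinear real-interpolation argument in the spatial variable only. Fixing the time exponents, I would produce two (or four) auxiliary Lebesgue estimates \eqref{str} with spatial exponents $(r_0,\tilde r_0)$ and $(r_1,\tilde r_1)$ chosen symmetrically around $(r,\tilde r)$ so that $(1/r,1/\tilde r)$ lies strictly inside their convex hull, and then apply the bilinear interpolation theorem for Lorentz spaces — in the form that, from $T:L^{p_0}\times L^{q_0}\to X$ and $T:L^{p_1}\times L^{q_1}\to X$ with $(1/p_\theta,1/q_\theta)$ hitting the target, one obtains $T:L^{p,2}\times L^{q,2}\to X$ at the interpolated point, provided the two endpoints are genuinely distinct in the correct transversal direction. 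This upgrades the $L^{r'},L^{\tilde r'}$ norms to $L^{r',2},L^{\tilde r',2}$ while keeping the time exponents $q',\tilde q'$ untouched, which is precisely \eqref{37} for $V=0$. The condition $\frac{d-2}{d}<\frac{r}{\tilde r}<\frac{d}{d-2}$ guarantees that such a transversal pair of admissible Lebesgue estimates actually exists inside Vilela's region, while the case-split in \eqref{2} on $1/q$ versus $1/\tilde q$ ensures the off-diagonal time integrability needed at each endpoint.

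The main obstacle I anticipate is verifying that the perturbed spatial exponents $(r_0,\tilde r_0)$, $(r_1,\tilde r_1)$ can be chosen to \emph{simultaneously} satisfy all of Vilela's hypotheses — the scaling relation \eqref{sc} (which forces a compensating change, so one must perturb $r$ and $\tilde r$ in a coordinated way, possibly adjusting nothing in $q,\tilde q$ only if \eqref{sc} still closes, otherwise making an infinitesimal matching change), the acceptability ranges, and the strict inequalities — and to do so while keeping the interpolation parameter $\theta$ bounded away from $0$ and $1$. This is a careful but essentially elementary convexity/open-condition check in the exponent polytope; the strictness built into \eqref{2} is what makes it go through. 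A secondary point is citing the bilinear Lorentz interpolation in exactly the form needed (real interpolation with second index $2$ on each factor, cf. the multilinear interpolation results in \cite{foschi2005inhomogeneous} and Bergh--Löfström); I would either quote it or sketch the standard $K$-functional decomposition. Once the $V=0$ case is in hand, the potential cases (Theorems \ref{1}, \ref{1'}) follow by the perturbative/Duhamel machinery developed later in the paper.
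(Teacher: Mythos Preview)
Your strategy is sound and should go through, but it is not the route the paper takes. The paper does \emph{not} invoke Vilela's completed Lebesgue estimate~\eqref{str} as a black box; instead it re-enters Vilela's argument at the dyadic level. Concretely, the paper writes $T=\sum_{j\in\Z}T_j$ with $T_j$ as in~\eqref{lop}, applies Vilela's Lemma~2.2 (quoted as Lemma~\ref{11}) to bound each $T_j$ by $2^{-j\beta(\tilde q,q,\tilde r,r)}$ times the product of the $L^{\tilde q'}L^{\tilde r'}$ and $L^{q'}L^{r'}$ norms, and then interpolates at the level of the sequence $\{T_j(F,G)\}_j$ rather than of the sum. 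The key point is that Lemma~\ref{11} does \emph{not} require the scaling relation~\eqref{sc}; the scaling deficit is carried by the exponent~$\beta$. This lets the paper perturb $r$ and~$\tilde r$ \emph{independently} (to three points $(r_0,\tilde r_0),(r_1,\tilde r_0),(r_0,\tilde r_1)$) while holding $q,\tilde q$ fixed, and then apply the three-map bilinear interpolation of Lemma~\ref{18} into $(l_\infty^{\beta_0},l_\infty^{\beta_1})_{\theta,1}=l_1^0$, recovering summability in~$j$ and hence~\eqref{37}.

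Your alternative --- perturb $(r,\tilde r)$ along the one-dimensional line $1/r+1/\tilde r=\text{const}$ so that~\eqref{sc} still holds with $q,\tilde q$ unchanged, apply Vilela's Theorem~2.4 at two nearby points, then use two-endpoint bilinear real interpolation into~$\C$ with both second indices equal to~$2$ --- is more economical once~\eqref{str} is already available, and avoids reopening the dyadic decomposition. What the paper's approach buys is: (i)~it is self-contained, rebuilding the estimate from the dyadic pieces rather than quoting the Lebesgue endpoint; and (ii)~two free perturbation parameters instead of one, which makes the three-point interpolation bookkeeping (matching $\theta_0+\theta_1=\theta$ with the condition $(1-\theta)\beta_0+\theta\beta_1=0$) explicit. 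The obstacle you correctly anticipated --- that~\eqref{sc} rigidly couples the exponents when working with the finished estimate --- is exactly what the dyadic route sidesteps; you resolve it instead by restricting to the scaling line, which is fine but means you must cite the two-point Lions--Peetre bilinear theorem in a form giving Lorentz index~$2$ on \emph{both} factors, and check the vector-valued embedding $L^{q'}_t\big((L^{r_0'},L^{r_1'})_{\theta,2}\big)\hookrightarrow (L^{q'}_tL^{r_0'},L^{q'}_tL^{r_1'})_{\theta,2}$ via Minkowski and $q,\tilde q\ge2$, just as the paper does.
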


Because of the scaling condition \eqref{sc}, once we fix $r,\tilde{r},q$, the exponent $\tilde{q}$ is determined. Theorem \ref{3'} tells that the estimate \eqref{37} with $V=0$, holds on the pentagon $ACDEF$, for some $q$'s, see Figure \ref{f1}. To prove Theorem \ref{3'} we crucially use the Lemma \ref{11} below due to Vilela \cite[Lemma 2.2]{vilela2007inhomogeneous}. 
 It is worth noticing here that the author in \cite{vilela2007inhomogeneous} also presented some negative result, namely, it was shown that  if \eqref{sc} is satisfied and $1/r+1/\tilde{r}<(d-2)/d$ or if $1/r,1/\tilde{r}$ is outside the pentagon $ACD'E'F$ in Figure \ref{f1}, then the estimate \eqref{str} (and hence \eqref{37}) does not hold for $V=0$.

As mentioned earlier, The above result is used to go from zero potential to non-zero potential case by using perturbation technique, incorporated from \cite{pierfelice2006strichartz}, followed by interpolations for mixed Lebesgue/Lorentz spaces. 
By $2^*,p_*$ we mean the standard Sobolev conjugate $2d/(d-2)$ of $2$ and the number $p(d-1)/(d-2)$ (for $d\geq3$) respectively. Set $2_*^*=(2^*)_*$ and note that $2<2_*<2^*<2_*^*$. Here is our main result, answering the questions asked earlier: 

\begin{theorem}\label{1}
Let $d\geq3$ and $(q,r),(\tilde{q},\tilde{r})$
satisfy \eqref{sc}, \eqref{2} and\textcolor{black}{
\begin{equation}\label{c1}
\begin{cases}
2_*<r<2_*^*,\  \textit{(the region BCDE)}\\
d\left|\frac{1}{r}-\frac{1}{2^*}\right|<\frac{1}{2}=\frac{1}{q}
\end{cases}or\ \ 
\begin{cases}
2_*<\tilde{r}<2_*^*,\ \textit{(the region DEFG)}\\
d\left|\frac{1}{\tilde{r}}-\frac{1}{2^*}\right|<\frac{1}{2}=\frac{1}{\tilde{q}}
\end{cases}
\end{equation}}
Let $V$ be a real-valued potential with $c_s(\frac{dr}{2r+d})'\n V\n_{L^{d/2,\infty}}({\it or\ }c_s(\frac{d\tilde{r}}{2\tilde{r}+d})'\n V\n_{L^{d/2,\infty}}{\it  respectively})<1$
(here $c_s$ is the constant appearing in the Strichartz estimates for the unperturbed equation). For the region $DEFG$ i.e. the second set of conditions in \eqref{c1}, we further assume, $\n V\n_{L^{d/2,\infty}}$ is so small that, $\Delta+V$ is self-adjoint. Then the inhomogeneous Strichartz estimate \eqref{37} holds for a solution $u$ to \eqref{cp}. \\
Moreover, a similar result holds for time-dependent potential in the region $BCDE$, if $V$ satisfies the smallness condition $c_s(\frac{dr}{2r+d})'\n V\n_{L^\infty L^{d/2,\infty}}<1$. 
\end{theorem}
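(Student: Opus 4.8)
The plan is to transfer the zero-potential estimate of Theorem~\ref{3'} to the potential case by a perturbative Duhamel argument, following the scheme of Pierfelice~\cite{pierfelice2006strichartz}. Treat first the region $BCDE$, where the first line of \eqref{c1} forces $q=2$ (this is what makes the scheme close). Regard $u$ as the solution of the \emph{free} inhomogeneous equation with forcing $F-Vu$, i.e.\ $u(t)=-i\int_0^t e^{i(t-s)\Delta}(F(s)-V(s)u(s))\,ds$, and bound the two Duhamel pieces separately by the $V=0$ case of \eqref{37}. For the $F$-piece one uses the pairs $(q,r),(\tilde q,\tilde r)$ themselves, which is legitimate since by hypothesis they satisfy \eqref{sc} and \eqref{2}. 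For the $Vu$-piece one keeps the output pair $(q,r)=(2,r)$ but replaces $(\tilde q,\tilde r)$ by the auxiliary pair $(\tilde q_1,\tilde r_1)=(2,\tilde r_1)$ with $\tfrac1{\tilde r_1}=\tfrac{d-2}{d}-\tfrac1r$; this value of $\tilde r_1$ is exactly the one for which the Lorentz--H\"older inequality $\|Vw\|_{L^{\tilde r_1',2}}\le(\tfrac{dr}{2r+d})'\|V\|_{L^{d/2,\infty}}\|w\|_{L^{r,2}}$ (valid since $\tfrac1{\tilde r_1'}=\tfrac2d+\tfrac1r$), together with the matching time exponent $\tilde q_1'=2=q$, brings the estimate back to $\|u\|_{L^2L^{r,2}}$.

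The delicate point --- and the step that pins down the precise shape of the region \eqref{c1} --- is to check that the auxiliary pair $(2,\tilde r_1)$ falls in the range of Theorem~\ref{3'}. A short computation with $\tfrac1{\tilde r_1}=\tfrac{d-2}{d}-\tfrac1r$ shows that the scaling identity \eqref{sc} for $(2,r),(2,\tilde r_1)$ holds automatically; that the inequalities $r>2_*$ and $r<2_*^*$ are equivalent, respectively, to $\tfrac{r}{\tilde r_1}>\tfrac{d-2}{d}$ and $\tfrac{r}{\tilde r_1}<\tfrac{d}{d-2}$, so that the first line of \eqref{2} for the auxiliary pair becomes exactly the condition $2_*<r<2_*^*$ of $BCDE$; that $2<\tilde r_1<\infty$ is then automatic; and that the remaining part of \eqref{2}, which for $q=\tilde q_1=2$ reads $|\tfrac1r-\tfrac1{\tilde r_1}|<\tfrac1d$, coincides with $d\,|\tfrac1r-\tfrac1{2^*}|<\tfrac12$. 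Hence the hypotheses \eqref{c1} are precisely what is needed for Theorem~\ref{3'} to apply to the auxiliary pair; I expect this matching of exponents to be the main obstacle, the rest being routine bookkeeping with mixed Lebesgue/Lorentz norms.

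Granting this, Theorem~\ref{3'} gives $\|\int_0^t e^{i(t-s)\Delta}(Vu)(s)\,ds\|_{L^2L^{r,2}}\le c_s\|Vu\|_{L^2L^{\tilde r_1',2}}\le c_s(\tfrac{dr}{2r+d})'\|V\|_{L^{d/2,\infty}}\|u\|_{L^2L^{r,2}}$, with $c_s$ the Strichartz constant of Theorem~\ref{3'} and $(\tfrac{dr}{2r+d})'$ the Lorentz--H\"older constant. Adding the $F$-piece one gets $\|u\|_{L^2L^{r,2}}\le c_s\|F\|_{L^{\tilde q'}L^{\tilde r',2}}+c_s(\tfrac{dr}{2r+d})'\|V\|_{L^{d/2,\infty}}\|u\|_{L^2L^{r,2}}$, and the smallness hypothesis $c_s(\tfrac{dr}{2r+d})'\|V\|_{L^{d/2,\infty}}<1$ lets one absorb the last term, which is \eqref{37}. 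To keep the absorption non-circular I would run it as a contraction-mapping argument as in \cite{pierfelice2006strichartz}: the map $u\mapsto -i\int_0^t e^{i(t-s)\Delta}(F-Vu)\,ds$ is a contraction on a ball of $L^2L^{r,2}$ with ratio $c_s(\tfrac{dr}{2r+d})'\|V\|_{L^{d/2,\infty}}<1$, and its fixed point is the solution of \eqref{cp} with the claimed bound. For a time-dependent potential $V\in L^\infty(\R,L^{d/2,\infty})$ on $BCDE$ the same computation applies verbatim --- the Strichartz and H\"older inequalities act slicewise in $t$ and the time exponents still match ($q=2=\tilde q_1'$) --- after replacing $\|V\|_{L^{d/2,\infty}}$ by $\|V\|_{L^\infty L^{d/2,\infty}}$; no self-adjointness of a single operator $\Delta+V(t)$ is needed, the whole argument going through the free Duhamel formula.

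Finally, the region $DEFG$ is obtained from $BCDE$ by duality, which is why one there assumes $\|V\|_{L^{d/2,\infty}}$ so small that $H=\Delta+V$ is self-adjoint (case (ii) of the Introduction): for such $H$ the retarded and advanced inhomogeneous propagators are mutually adjoint, so \eqref{37} for the pair $(q,r;\tilde q,\tilde r)$ is equivalent --- using that the dual of $L^qL^{r,2}$ is $L^{q'}L^{r',2}$ for $1<q,r<\infty$ --- to \eqref{37} for the swapped pair $(\tilde q,\tilde r;q,r)$. Since the second set of conditions in \eqref{c1} is exactly the first set written with $(\tilde q,\tilde r)$ playing the role of the output pair, the already-established $BCDE$ case applies to the swapped pair and yields \eqref{37} on $DEFG$. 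The same remark explains why the time-dependent statement is claimed only on $BCDE$: this duality step requires a genuine (time-independent) self-adjoint operator, which is unavailable when $V$ depends on $t$.
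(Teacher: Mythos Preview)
Your proposal is correct and follows essentially the same route as the paper's proof: the paper likewise writes $u=u_1+u_2$ with $u_1,u_2$ solving the free equation forced by $F$ and $-Vu$, applies Theorem~\ref{3'} to $u_1$ with the given pair $(q,r),(\tilde q,\tilde r)$ and to $u_2$ with the auxiliary pair $(q,r),(q',(\tfrac{dr}{2r+d})')$ (which is your $(2,r),(2,\tilde r_1)$), uses the Lorentz--H\"older Lemma~\ref{lh} to produce the constant $(\tfrac{dr}{2r+d})'\|V\|_{L^\infty L^{d/2,\infty}}$, and absorbs under the smallness hypothesis; the $DEFG$ region is handled, as you do, by the self-adjoint duality remark preceding the theorem. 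Your verification that the auxiliary pair satisfies \eqref{2} precisely when \eqref{c1} holds, and your contraction-mapping formulation to avoid assuming $\|u\|_{L^2L^{r,2}}<\infty$ a priori, make explicit points the paper leaves implicit.
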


\begin{rmk}{\rm
 Our results Theorems \ref{1} and \ref{1'} below extend the results of Pierfelice  \cite[Theorems 1, 3]{pierfelice2006strichartz}, Cazenave, Weissler \cite{cazenave1992rapidly}, Kato \cite{kato1994q}, and Vilela \cite[Theorem 2.4]{vilela2007inhomogeneous}. }
\end{rmk}

By symmetry of the problem we conclude that if the estimate \eqref{37} is true for $(q,r)=(q_0,r_0),(\tilde{q},\tilde{r})=(\tilde{q}_0,\tilde{r}_0)$ then the estimate \eqref{37} is also true for $(q,r)=(\tilde{q}_0,\tilde{r}_0),(\tilde{q},\tilde{r})=(q_0,r_0)$ provided $\Delta+V$ is self-adjoint. In that case, the plot of $(1/r,1/\tilde{r})$, for which the estimate \eqref{37} holds for some $q$, becomes symmetric with respect to the line $1/r=1/\tilde{r}$, in $1/r$ verses $1/\tilde{r}$ coordinate, see Figure \ref{f1}. 
Therefore it is enough to prove this result for the first set of conditions in \eqref{c1}, as we impose further smallness of $\n V\n_{L^{d/2,\infty}}$ for the region $DEFG$ so that $\Delta+V$ becomes self-adjoint. 
\begin{rmk}\label{rm1}
{\rm
Interpolating the results in the two regions mentioned in \eqref{c1}, we would derive that the estimate \eqref{37} also holds in the triangular region $BGH$.} 
 \end{rmk} 
Now interpolating Theorem \ref{1} (with \textcolor{black}{$2_*<r<2_*^*$}) and the result of Pierfelice \cite[Theorems 1, 3]{pierfelice2006strichartz}  we conclude the following:

\begin{theorem}\label{1'}
Let $d\geq3$ and $V$ be a real-valued potential with $c_s2^*\n V\n_{L^{d/2,\infty}}<1$. 
Then the inhomogeneous Strichartz estimate \eqref{37} holds for a solution $u$ to \eqref{cp} provided
 $(q,r),(\tilde{q},\tilde{r})$
satisfy the scaling condition \eqref{sc}, \textcolor{black}{
$
\frac{2q(d-1)}{q(d-1)-2}<r<\frac{2qd(d-1)}{qd(d-1)-6d+8},\left|1-\frac{q}{2d}(\frac{1}{2}-\frac{1}{r})\right|<\frac{1}{2},\tilde{r}>2
$ and
\begin{equation}
\begin{cases}
\frac{1}{2}-\frac{1}{\tilde{r}}-\frac{1}{q}<\frac{d}{2}(\frac{1}{r}-\frac{1}{\tilde{r}})<\frac{1}{r}-\frac{1}{2}+\frac{1}{q},\ 2\leq q\leq\tilde{q}\\
\frac{d}{2}(\frac{1}{r}-\frac{1}{\tilde{r}})<\frac{1}{\tilde{q}}\quad\text{if }r\leq\tilde{r}\\
\frac{d}{2}(\frac{1}{\tilde{r}}-\frac{1}{r})<\frac{1}{q}\quad\text{if }\tilde{r}\leq r.
\end{cases}or\ \ 
\begin{cases}
\frac{d-1}{d}(1-\frac{2}{q})-\frac{1}{\tilde{r}}<\frac{d}{2}(\frac{1}{{r}}-\frac{1}{\tilde{r}})<\frac{1}{r}, \ \tilde{q}\geq2\\
\frac{d}{2}(\frac{1}{{r}}-\frac{1}{\tilde{r}})<\frac{1}{\tilde{q}} \quad\text{if }\frac{1}{r}-\frac{1}{\tilde{r}}\geq\frac{1}{d}(1-\frac{2}{q})\\
\frac{d}{2}(\frac{1}{\tilde{r}}-\frac{1}{r})<\frac{1}{q}\quad\text{if }\frac{1}{r}-\frac{1}{\tilde{r}}\leq\frac{1}{d}(1-\frac{2}{q})
\end{cases}
\end{equation}}
Moreover, a similar result holds for time-dependent potential, if $c_s2^*\n V\n_{L^\infty L^{d/2,\infty}}<1$. 
\end{theorem}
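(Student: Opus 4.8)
The plan is to obtain Theorem~\ref{1'} from Theorem~\ref{1} and from Pierfelice's estimates \cite[Theorems 1, 3]{pierfelice2006strichartz} purely by operator interpolation, with no new analytic input. Fix a real-valued $V$ with $c_s2^*\n V\n_{L^{d/2,\infty}}<1$; this smallness puts us in case (ii) above, so that $\Delta+V$ is self-adjoint via Friedrichs' extension, and it is calibrated (as checked in the bookkeeping below) to dominate the perturbative smallness thresholds of the two results being combined. For such $V$ the solution map $F\mapsto u$ of \eqref{cp} is a fixed \emph{linear} operator $\mathcal T_V$, namely $\mathcal T_VF(t)=-i\int_0^te^{i(t-s)(\Delta+V)}F(s)\,ds$ (in the time-dependent case, the operator delivered by the Neumann-series/fixed-point construction of \cite{pierfelice2006strichartz}); hence every inequality of the form \eqref{37}, for a given pair of quadruples $(q,r),(\tilde q,\tilde r)$, is exactly a boundedness statement for $\mathcal T_V$ between mixed-norm Lebesgue--Lorentz spaces, and such statements interpolate. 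The only interpolation fact needed is the standard identity
\[
\big[\,L^{q_0}(\R;L^{r_0,2}(\rd)),\ L^{q_1}(\R;L^{r_1,2}(\rd))\,\big]_\theta=L^{q_\theta}(\R;L^{r_\theta,2}(\rd)),\qquad \tfrac1{q_\theta}=\tfrac{1-\theta}{q_0}+\tfrac\theta{q_1},\ \tfrac1{r_\theta}=\tfrac{1-\theta}{r_0}+\tfrac\theta{r_1},
\]
valid for $1<r_0\neq r_1<\infty$ (a harmless perturbation of one endpoint always arranges $r_0\neq r_1$), and likewise on the input side for $L^{\tilde q'}(L^{\tilde r',2})$: complex interpolation of Lorentz spaces keeps the second index pinned at $2$.

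The two ends of the interpolation are: on one side, any quadruple $(q_0,r_0,\tilde q_0,\tilde r_0)$ admitted by Theorem~\ref{1} in the region $BCDE$, so $q_0=2$, $r_0\in(2_*,2_*^*)$, with $(\tilde q_0,\tilde r_0)$ determined by \eqref{sc} and restricted by \eqref{2}; on the other side, an admissible pair $(q_1,r_1)$ together with an admissible $(\tilde q_1,\tilde r_1)$ as allowed by \cite[Theorems 1, 3]{pierfelice2006strichartz}. Each end furnishes \eqref{37} for $\mathcal T_V$: the Pierfelice side requires a smallness of the same form, implied by the present hypothesis, while the $BCDE$ side of Theorem~\ref{1} requires $c_s(\tfrac{dr_0}{2r_0+d})'\n V\n_{L^{d/2,\infty}}<1$ --- and part of the bookkeeping below is to check that the $BCDE$ endpoint can always be chosen so that this is implied by $c_s2^*\n V\n_{L^{d/2,\infty}}<1$, consistent with the fact that $(\tfrac{dr}{2r+d})'$ equals exactly $2^*$ at $r=2^*$, which is precisely the pair $(q,r)=(2,2^*)$ at which, by \eqref{sc}, $(\tilde q_0,\tilde r_0)$ is forced to be admissible and where the $BCDE$ region meets the admissible--admissible region. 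Interpolating at $\theta\in(0,1)$ then yields \eqref{37} for the quadruple with $1/q=(1-\theta)/q_0+\theta/q_1$, etc. Since the scaling relation \eqref{sc} is affine in $(1/q,1/\tilde q,1/r,1/\tilde r)$ it is automatically inherited, and the admissibility relations built into the Pierfelice endpoint propagate into the inequalities appearing in the conclusion.

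The remaining --- and principal --- task is the exponent bookkeeping, which is where I expect all the care to go: one must compute the set of quadruples reachable this way, which at each fixed $q$ is a slice (in reciprocal coordinates) of the convex hull of the two source regions, and match it with the two alternative systems of inequalities in the statement. The window $\big(\tfrac{2q(d-1)}{q(d-1)-2},\,\tfrac{2qd(d-1)}{qd(d-1)-6d+8}\big)$ for $r$ should come out as the image of $(2_*,2_*^*)$ under the affine map sending $q_0=2$ to $q$ (both endpoints of the window collapse to $2_*$ and $2_*^*$ at $q=2$); the constraint $\big|1-\tfrac q{2d}(\tfrac12-\tfrac1r)\big|<\tfrac12$ records how far the admissible endpoint can be pushed along the Strichartz line; and the split into the two cases (``$2\le q\le\tilde q$'' versus ``$\tilde q\ge2$'') reflects which branch of the last condition in \eqref{2} is active together with which of the two admissible endpoints $(r_1,\tilde r_1)$ is used, i.e. on which side of the diagonal $1/r=1/\tilde r$ the pair $(r,\tilde r)$ lies. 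One then verifies that for every target quadruple in the stated region the required $\theta$, $(r_0,\tilde r_0)\in BCDE$ and admissible $(r_1,\tilde r_1)$ actually exist with $(\tfrac{dr_0}{2r_0+d})'\le 2^*$, so that $c_s2^*\n V\n_{L^{d/2,\infty}}<1$ suffices throughout. The time-dependent statement follows verbatim, replacing $L^{d/2,\infty}$ by $L^\infty(\R;L^{d/2,\infty}(\rd))$ and invoking the time-dependent halves of Theorem~\ref{1} and of \cite[Theorems 1, 3]{pierfelice2006strichartz}, since $\mathcal T_V$ is still a fixed linear operator there.
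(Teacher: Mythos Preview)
Your plan is essentially the paper's proof: interpolate Theorem~\ref{1} (taken at $q_0=2$, $r_0\in(2_*,2_*^*)$) against a Pierfelice admissible--admissible estimate, then read off the conditions. The paper does exactly this, pinning the Pierfelice endpoint to $(q_1,r_1,\tilde q_1,\tilde r_1)=(\infty,2,\infty,2)$ for the first alternative (obtained there by a direct energy argument, though it is equally a special case of \cite{pierfelice2006strichartz}) and to $(\infty,2,2,2^*)$ for the second; with the single choice $\theta=2/q$ the listed inequalities are then line-by-line rewritings of the Theorem~\ref{1} hypotheses at the moving endpoint $(2,r_1,\tilde q_1,\tilde r_1)$, which makes the bookkeeping you anticipated entirely mechanical.
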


From Theorems \ref{1},  \ref{1'} and Remark \ref{rm1}, we conclude that if $\n V\n_{L^\infty L^{d/2,\infty}}$ is small enough, then the estimate \eqref{37} holds for $1/r,1/\tilde{r}$ in the pentagonal region $ACDEF$ (in Figure \ref{f1}, the line $DE$ is excluded), with some $q$'s, for which the pairs $(q,r),(\tilde{q},\tilde{r})$ need not be admissible.

\begin{center}
\begin{figure}\label{f1}
\begin{tikzpicture}
\draw[->] (0,0)--(5,0) node[anchor=west] {\tiny{$\frac{1}{r}$}};
\draw[->] (0,0)--(0,5) node[anchor=north east] {\tiny{$\frac{1}{\tilde{r}}$}};
\draw (0,0) rectangle (4,4);
\fill[gray!40!white](2,2)--(2/3,2)--(1/3,1)--(1,1/3)--(2,2/3)--(2,2);
\draw[dashed,gray] (0,2)--(4,2)node[anchor=west]{\textcolor{black}{\tiny{${1}/{2}$}}};
\draw[dashed,gray] (2,0)--(2,4)node[anchor=south]{\textcolor{black}{\tiny{$\frac{1}{2}$}}};
\draw[dashed,gray] (0,0)--(4,4/3)node[anchor=west]{\textcolor{black}{\tiny{${2}/{2^*}$}}};
\draw[dashed,gray] (0,0)--(4/3,4)node[anchor=south]{\textcolor{black}{\tiny{$\frac{2}{2^*}$}}};
\draw[dashed,gray] (0,4/3)--(8/3,4)node[anchor=south]{\textcolor{black}{\tiny{$\frac{d+1}{2^*}$}}};
\draw[dashed,gray] (4/3,0)--(4,8/3)node[anchor=west]{\textcolor{black}{\tiny{${(d+1)}/{2^*}$}}};
\draw[dashed,gray] (4/3,0)--(0,4/3);
\draw[dashed,gray] (1/3,0)--(1/3,4)node[anchor=south]{\textcolor{black}{\tiny{$\frac{1}{2_*^*}$}}};
\draw[dashed,gray] (2/3,0)--(2/3,4)node[anchor=south]{\textcolor{black}{\tiny{$\frac{1}{2^*}$}}};
\draw[dashed,gray] (1,0)--(1,4)node[anchor=south]{\textcolor{black}{\tiny{$\frac{1}{2_*}$}}};
\draw[dashed,gray] (0,1/3)--(4,1/3)node[anchor=west]{\textcolor{black}{\tiny{${1}/{2_*^*}$}}};
\draw[dashed,gray] (0,2/3)--(4,2/3)node[anchor=west]{\textcolor{black}{\tiny{${1}/{2^*}$}}};
\draw[dashed,gray] (0,1)--(4,1)node[anchor=west]{\textcolor{black}{\tiny{${1}/{2_*}$}}};
\draw[dashed,gray] (2,1)--(1,2);
\filldraw[black](4,0)circle(1pt)node[anchor=north ]{\tiny{$(1,0)$}};
\filldraw[black](0,4)circle(1pt)node[anchor=east ]{\tiny{$(0,1)$}};
\filldraw[black](2,2)circle(0pt)node[anchor=south west ]{\tiny{$A$}};
\filldraw[black](1.15,1.9)circle(0pt)node[anchor=south]{\tiny{$B$}};
\filldraw[black](.55,1.9)circle(0pt)node[anchor=south]{\tiny{$C$}};
\filldraw[black](.4,.85)circle(0pt)node[anchor=east]{\tiny{$D$}};
\filldraw[black](.85,.4)circle(0pt)node[anchor=north]{\tiny{$E$}};
\filldraw[black](2,2/3)circle(0pt)node[anchor=north west]{\tiny{$F$}};
\filldraw[black](1.9,1.15)circle(0pt)node[anchor=west]{\tiny{$G$}};
\filldraw[black](0,4/3)circle(0pt)node[anchor=east]{\tiny{$D'$}};
\filldraw[black](4/3,0)circle(0pt)node[anchor=north]{\tiny{$E'$}};
\filldraw[black](.9,.9)circle(0pt)node[anchor=south west]{\tiny{$H$}};
\end{tikzpicture}
\caption{\small{Strichartz estimate \eqref{37} holds in the shaded region (side $DE$ excluded) for certain values of $q$ for $\n V\n_{L^\infty L^{d/2,\infty}}$ sufficiently small. Note that it is only accurate for the case $d=3$, for larger $d$ the points $D'(\frac{d-3}{2d},\frac{d-1}{2d})$, $E'(\frac{d-1}{2d}\frac{d-3}{2d})$ might be inside the first quadrant, and the line $D'E'$ might cut the lines $AC$ and $AF.$}}
\end{figure}
\end{center}

Next, we state the inhomogeneous estimates for inverse-square potentials. Note that the first two results are from \cite{burq2003strichartz} and \cite{mizutani2020uniform} and we derive the third case as a generalization of the first two cases. 

\begin{theorem}\label{56}
Let $d\geq3$, $a\in(-\infty,\frac{(d-2)^2}{4})$, $V=\frac{a}{|\cdot|^2}$ and $0<\gamma,\tilde{\gamma}\leq1$. 
Then the  Strichartz estimate
\begin{enumerate}
\item[(i)] \eqref{str}, \eqref{str1} holds for admissible pairs $(q,r),(\tilde{q},\tilde{r})$,
\item[(ii)] \eqref{37} holds for $q=\tilde{q}=2,r=\frac{2d}{d-2s},\tilde{r}=\frac{2d}{d-2(2-s)}$ provided $s\in A_{a,1}$, 
\item[(iii)] \eqref{str} holds for $q=\frac{2}{\gamma},\tilde{q}=\frac{2}{\tilde{\gamma}},r=\frac{2d}{d-2(s+\gamma-1)},\tilde{r}=\frac{2d}{d-2(1+\tilde{\gamma}-s)}$ provided $s\in A_{a,\gamma\tilde{\gamma}}$, 
\end{enumerate}
where $A_{a,\lambda}=\left(1-\frac{d-2}{2(d-1)}\lambda,1+\frac{d-2}{2(d-1)}\lambda\right)\cap R_{a,\lambda}$, ($0<\lambda\leq1$) and $R_{a,\lambda}$ is given by\begin{equation*}
R_{a,\lambda}=\begin{cases}
\hspace{2.40cm}\left(1-\frac{\lambda}{2},1+\frac{\lambda}{2}\right),\hspace{2.85cm}\text{if }\ \quad\sqrt{\frac{(d-2)^2}{4}-a}>\frac{1}{2}\\
\left(1-\frac{(d-2)^2-4a}{2(2+4a-(d-2)^2)}\lambda,1+\frac{(d-2)^2-4a}{2(2+4a-(d-2)^2)}\lambda\right),\quad\text{if } 0<\sqrt{\frac{(d-2)^2}{4}-a}\leq\frac{1}{2}.
\end{cases}
\end{equation*}
\end{theorem}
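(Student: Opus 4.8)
The plan is to obtain (iii) from parts (i) and (ii) by applying complex interpolation twice to the linear (retarded) Duhamel operator $\mathcal{I}\colon F\mapsto u$, where $u$ is the solution of \eqref{cp} with $V=a/|\cdot|^2$; this operator is well defined because $\Delta+a/|\cdot|^2$ is self-adjoint for $a<(d-2)^2/4$, $d\ge3$. Since in (ii) one has $r=\tfrac{2d}{d-2s}>2$ and $\tilde r=\tfrac{2d}{d-2(2-s)}>2$ for every $s\in A_{a,1}$ (the interval $A_{a,1}$ is centred at $1$ with radius at most $\tfrac{d-2}{2(d-1)}<\tfrac12$), the Lorentz estimate \eqref{37} of (ii) already implies the plain estimate \eqref{str} through the embeddings $L^{r,2}\hookrightarrow L^{r}$ and $L^{\tilde r'}\hookrightarrow L^{\tilde r',2}$ (Corollary \ref{cld1}); hence it suffices to interpolate ordinary mixed Lebesgue estimates. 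The three inputs I would use are: the one-parameter family coming from (ii), that is, $\n\mathcal{I}F\n_{L^{2}L^{2d/(d-2\sigma)}}\lesssim\n F\n_{L^{2}L^{(2d/(d-2(2-\sigma)))'}}$ for $\sigma\in A_{a,1}$; and, from (i), the two admissible-pair estimates $\n\mathcal{I}F\n_{L^{\infty}L^{2}}\lesssim\n F\n_{L^{2}L^{(2^*)'}}$ (for the pairs $(\infty,2)$ and $(2,2^*)$) and $\n\mathcal{I}F\n_{L^{2/\gamma}L^{2d/(d-2\gamma)}}\lesssim\n F\n_{L^{1}L^{2}}$ (for the pairs $(\tfrac{2}{\gamma},\tfrac{2d}{d-2\gamma})$ and $(\infty,2)$), both of which are admissible for every $d\ge3$ and $\gamma\in(0,1]$.

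Stage 1 (introducing $q=2/\gamma$). I would interpolate the family from (ii) against $\n\mathcal{I}F\n_{L^{\infty}L^{2}}\lesssim\n F\n_{L^{2}L^{(2^*)'}}$ with parameter $\theta_1=1-\gamma\in[0,1]$. By the standard identity $[L^{p_0}(L^{b_0}),L^{p_1}(L^{b_1})]_{\theta}=L^{p}(L^{b})$ for mixed norms, the time exponent on the range side moves from $2$ to $2/\gamma$ while that on the source side stays $2$ (it equals $2$ at both endpoints), and a short computation of the space exponents shows that the result is exactly the estimate asserted in (iii) in the special case $\tilde\gamma=1$, with $s$ replaced by $s_1:=1+\gamma(\sigma-1)$. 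Moreover $A_{a,\lambda}$ is precisely the dilation of $A_{a,1}$ by the factor $\lambda$ about its centre $1$ --- because both $(1-\tfrac{d-2}{2(d-1)}\lambda,1+\tfrac{d-2}{2(d-1)}\lambda)$ and $R_{a,\lambda}$ scale that way --- so $\sigma\mapsto s_1$ carries $A_{a,1}$ onto $A_{a,\gamma}$, and Stage 1 is exactly (iii) with $\tilde\gamma=1$.

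Stage 2 (introducing $\tilde q=2/\tilde\gamma$). I would then interpolate the family produced in Stage 1 (now indexed by $s_1\in A_{a,\gamma}$) against $\n\mathcal{I}F\n_{L^{2/\gamma}L^{2d/(d-2\gamma)}}\lesssim\n F\n_{L^{1}L^{2}}$ with parameter $\theta_2=1-\tilde\gamma\in[0,1]$. The reason for choosing this particular admissible endpoint is that it shares the range-side time exponent $2/\gamma$ with the Stage 1 family, so interpolation keeps $q=2/\gamma$ fixed while the source-side time exponent moves from $2$ to $2/\tilde\gamma$; the same exponent bookkeeping then gives exactly the estimate of (iii) for the given $\gamma$ and $\tilde\gamma$, with $s:=1+\tilde\gamma(s_1-1)$. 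Since $s-1=\tilde\gamma(s_1-1)=\gamma\tilde\gamma(\sigma-1)$, the hypothesis $\sigma\in A_{a,1}$ becomes exactly $s\in A_{a,\gamma\tilde\gamma}$, which is the hypothesis of (iii); and when $\gamma=1$ (resp.\ $\tilde\gamma=1$) Stage 1 (resp.\ Stage 2) is vacuous, so the construction degenerates correctly to (ii).

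The substance is exponent arithmetic rather than analysis, and the points I would need to check carefully are: that the two auxiliary endpoints really are admissible pairs of (i) for the whole range of $d$ and $\gamma$ (in particular $(\tfrac{2}{\gamma},\tfrac{2d}{d-2\gamma})$ has $q\ge2$, $2\le r<\infty$, and is never the excluded triple $(2,\infty,2)$); that every Lebesgue exponent produced lies in $[1,\infty]$, which follows from $s+\gamma-1>0$ and $1+\tilde\gamma-s>0$, both guaranteed by $|s-1|<\tfrac{d-2}{2(d-1)}\gamma\tilde\gamma<\min(\gamma,\tilde\gamma)$; and that the mixed-norm complex interpolation identity applies here, which it does since the underlying measure spaces are $\sigma$-finite and in each application at most one of the outer exponents is $\infty$. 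I expect the only genuine subtlety to be setting up Stage 2 with the right endpoint: a naive choice would move $q$ as well as $\tilde q$, whereas the pair $(\tfrac{2}{\gamma},\tfrac{2d}{d-2\gamma})$, $(\infty,2)$ both leaves $q=2/\gamma$ untouched and contracts the admissible window for $s$ by the extra factor $\tilde\gamma$, and it is exactly this that produces the product $\gamma\tilde\gamma$ in $A_{a,\gamma\tilde\gamma}$.
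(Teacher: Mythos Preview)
Your proposal is correct and follows essentially the same approach as the paper: a two-stage interpolation between parts (i) and (ii), first introducing $q=2/\gamma$ while keeping $\tilde q=2$, then introducing $\tilde q=2/\tilde\gamma$ while keeping $q=2/\gamma$ fixed. The only cosmetic difference is that you fix the admissible endpoints to be the extremal pairs $(\infty,2),(2,2^*)$ and $(2/\gamma,2d/(d-2\gamma)),(\infty,2)$ with fixed interpolation parameters $\theta_1=1-\gamma$, $\theta_2=1-\tilde\gamma$, whereas the paper lets the admissible endpoint and $\theta$ vary with the target $s$; both parametrizations sweep out exactly $A_{a,\gamma\tilde\gamma}$.
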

Part (iii) of the above result can be rewritten in relatively less complicated way as follows: \eqref{str} holds  provided $q,\tilde{q},r,\tilde{r}$ satisfy \eqref{sc}, $q,\tilde{q}\geq2$ and $\big|d\big(\frac{1}{2}-\frac{1}{r}\big)-\frac{2}{q}\big|<\frac{2}{q\tilde{q}}\min\big\{\frac{d-2}{d-1},\frac{(d-2)^2-4a}{2+4a-(d-2)^2}\big\}$.
Now we compare the Theorems \ref{1}, \ref{1'} with Theorem \ref{56}: 
As inverse square potentials belong to $L^{d/2,\infty}(\rd)$, Theorems \ref{1} and \ref{1'}  are applicable to potentials of the form $a/\abs{\cdot}^2$, with $\abs{a}$ sufficiently small. 
Since 
 $\{a/\abs{\cdot}^2:a\in\R\}\subsetneq L^{d/2,\infty}(\rd)$,  
 Theorems \ref{1} and \ref{1'} cover larger collection of potentials than Theorem \ref{56}\footnote{ the original version of 
Theorem \ref{56} (ii) i.e. \cite[Theorem 1.3]{mizutani2020uniform} 
covers more general potential $V$ of the form $V(x)=v(\theta)r^{-2}$ where $r=|x|,\theta=x/|x|$, $v\in C^1(\mathbb{S}^{d-1})$ but this is also a proper subclass of $L^{d/2,\infty}(\rd)$.} when $|a|$ is sufficiently small. Another advantage of Theorems \ref{1}, \ref{1'} is they accommodate time-dependent potentials which Theorem \ref{56} does not cover. 
 Note that we can take $(1/r,1/\tilde{r})$ close to $(1/2^*,1/2)$  in the shaded region in figure \ref{f1} so that estimate \eqref{37} holds (for certain choices of $q$) using Theorem \ref{1'}, but such choice of $r,\tilde{r}$ is not applicable to Theorem \ref{56}\footnote{This can be proved by noting that  $A_{a,\gamma\tilde{\gamma}}=\big(1-\frac{d-2}{2(d-1)}\gamma\tilde{\gamma},1+\frac{d-2}{2(d-1)}\gamma\tilde{\gamma}\big)$ for sufficiently small $|a|$.  For this choice of $r,\tilde{r}$, one has $\gamma\tilde{\gamma}\sim(s-1)(2-s)$.  But $s\not\in A_{a,(s-1)(2-s)}\sim A_{a,\gamma\tilde{\gamma}}$.
}. 
On the other hand, there are exponents which are in acceptable range of Theorem \ref{56} but not covered by Theorems \ref{1}, \ref{1'} (e.g. one can choose $r=\infty$ in Theorem \ref{56} but it is not applicable to  Theorems \ref{1}, \ref{1'}).
A big advantage of Theorem \ref{56} is that, it covers all $a/\abs{\cdot}^2$ with $a<(d-2)^2/4$ even if $\abs{a}$ is large.
 Thus Theorem \ref{1'} (together with  Theorem \ref{1})  and Theorem \ref{56} have edge over one another in the acceptable range of various parameters.

As an application of Theorem \ref{56} (iii), we obtain a  {\bf Long time perturbation} result with inverse square potential (see Theorem \ref{ltp}), improving the result by Kenig-Merle \cite[Theorem 2.14]{kenig2006global}. This, in turn, gives a proof (an alternative to \cite{yang2020scattering0}) of the scattering result for focusing energy-critical NLS with inverse-square potential, see Theorem \ref{mt}.

We organize the material as follows: In Section \ref{s2} the notations and some known results are mentioned, in Section \ref{s3} we present the proofs of results.  In the end, in Section \ref{s4}, we provide the Long time perturbation result and its application to NLS.

\section{Preliminaries}\label{s2}
\subsection{Notations}
Throughout this article we denote by $\n \cdot \n$ and $\langle\cdot,\cdot\rangle$ the $L^2(\rd)$ norm and inner product respectively unless otherwise specified.\\
By $l_q^\beta$, we denote the weighted sequence space $L^q(\mathbb{Z},2^{j\beta}dj)$, where $dj$ stands for counting measure.\\
The Lorentz space is the space of all complex-valued measurable functions $f$ such that $\n f\n_{L^{r,s}(\rd)}<\infty$ where $\n f\n_{L^{r,s}(\rd)}$ is defined by
\begin{equation}\label{ls}
\n f\n_{L^{r,s}(\rd)}:=r^{\frac{1}{s}}\left\n t\mu\{|f|>t\}^{\frac{1}{r}}\right\n_{L^s\left((0,\infty),\frac{dt}{t}\right)}
\end{equation} with $0<r<\infty$, $0<s\leq\infty$ and $\mu$ denotes the Lebesgue measure on $\rd$. Therefore 
\begin{equation*}
\n f\n_{L^{r,s}(\rd)}=\begin{cases}
r^{1/s}\left(\int_0^\infty t^{s-1}\mu\{|f|>t\}^{\frac{s}{r}}dt\right)^{1/s}\quad \text{for }s<\infty\\
\sup_{t>0}t\mu\{|f|>t\}^{\frac{1}{r}}\quad\quad\quad \text{for }s=\infty.
\end{cases}
\end{equation*}\\
For an interval $I\subset \mathbb R$  the norm of the space-time Lebesgue space $L^q(I, L^r(\mathbb R^d))$ is defined by
$\|u\|_{L^q (I,L^r(\rd))}:=\left(\int_{I} \|u(t)\|^q_{L^r} dt \right)^{1/q}.$ Similarly $L^q(I, L^{r,s}(\mathbb R^d))$ is defined. We write $\|u\|_{L^q(I,L^r)}$ for $\|u\|_{L^q (I,L^r(\rd))}$ and $\|u\|_{L^qL^r}$ for $\|u\|_{L^q(\R,L^r(\rd))}$. 
By $\n\cdot\n_{S(I)},\n\cdot\n_{W(I)}$ we denote
\begin{equation*}\label{SW}
\|u\|_{S(I)}\ = \ \|u\| _{L^{\frac{2(d+2)}{d-2}}\big(I, L^{\frac{2(d+2)}{d-2}}(\rd)\big) }\ , \ \|u\|_{W(I)} \ = \ \|u\| _{L^{\frac{2(d+2)}{d-2}}\big(I, L^{\frac{2d(d+2)}{d^2+4}}(\rd)\big) }.
\end{equation*}\\
The Fourier transform $\F f$ or $\widehat{f}$ of a function $f\in L^1(\rd)$ is defined by $\widehat{f}(\xi)= \int_{\rd} e^{-2\pi i x\cdot \xi}f(x)dx$. Note that it has a unique extension as an operator in $\mathcal{S}'(\rd)$ with the property that for each $u\in\mathcal{S}'(\rd)$, $\langle\F u,\varphi\rangle=\langle u,\F\varphi\rangle$ for all $\varphi\in \mathcal{S}(\rd)$.\\
The Sobolev conjugate ${2d}/{(d-2)}$ of $2$ is denoted by $2^*$. We set $p_*={p(d-1)}/{(d-2)}$ (for $d\geq3$) and $2_*^*=(2^*)_*$. By $a\vee b$ we mean $\max\{a,b\}$ and $a\lesssim b$ stands for $a\leq cb$ for some (universal) constant $c$.

\subsection{Sobolev spaces adapted with inverse-square potentials }
We denote the Homogeneous Sobolev spaces by $\dot{W}^{s,p}(\rd)$ which is defined as the completion of $C_c^\infty(\rd)$
with the norm$$\n u\n _{\dot{W}^{s,p}} = \n  (- \Delta) ^{s/2}u\n_{L^p}  ,\ \ u\in C_c^\infty(\rd).$$
where $\widehat{(- \Delta) ^{s/2}u }(\xi) = (2 \pi)^s |\xi|^s \widehat{u} (\xi) $. We will denote
$\dot{H}^s(\rd)\ = \ \dot{W}^{s,2}(\rd).$

Consider the operator $\mathcal{L}_a:= \Delta + {a}/{|x|^2}$  (defined on $L^2$ by standard Friedrichs extension) which is self-adjoint in $L^2(\rd)$ for $a<\left(\frac{d-2}{2}\right)^2$. Let us denote the unitary group of operators $\{e^{it\la}\}_{t\in\R}$ in $L^2(\rd)$ by $\{S_a(t)\}_{t\in\R}$.

 We define as before:
$\dot{W}^{s,p}_a(\rd)\ $= the completion of $  C_c^\infty(\rd)$ with $\n\cdot\n _{\dot{W}_a^{s,p}}$,
where
$$\n u\n _{\dot{W}_a^{s,p}} = \n  (- \la) ^{s/2}u\n_{L^p}  ,\ \ u\in C_c^\infty(\rd)$$
and 
$\dot{H}_a^s(\rd)\ = \ \dot{W}_a^{s,2}(\rd).$ We often use $\n\cdot\n_{\dot{H}^1},\n\cdot\n_{\dot{H}_a^1}$ for $\n\cdot\n_{\hd},\n\cdot\n_{\ha}$ respectively.

The fractional derivative $D^\alpha u$ (or $|\nabla|^\alpha u$) of $u\in\mathcal{S}'(\rd)$ is defined as $\widehat{(- \Delta) ^{\alpha/2}u }$ i.e. $\F D^\alpha u=(2\pi)^\alpha|\xi|^\alpha\widehat{u}$ for $\alpha\in\R$.

Note that 
$ \n u\n _{\dot{H}^1}^2 = \langle (- \Delta) ^{1/2}u, (- \Delta) ^{1/2}u\rangle=\langle - \Delta u, u\rangle= \n\nabla u \n^2$ 
and 
$ \n u\n _{\dot{H}_a^1}^2 = \langle (- \la) ^{1/2}u, (- \la) ^{1/2}u\rangle \ = \ \langle - \la u, u\rangle \ = \ \n\nabla u \n^2 \ -\ a\left\n{u}/{|x|}\right\n^2.$ Therefore 
 using the
 Hardy's inequality 
\begin{equation*}\label{hardy}
\int_{\rd}|\nabla u(x)|^2dx\ \ge \ (\frac{d-2}{2})^2\int_{\rd}\frac{|u(x)|^2}{|x|^2}dx\ , \ u\in C_c^\infty(\rd)
\end{equation*} we have the following result:
\begin{lemma}
The homogeneous spaces $\hd$ and $\ha$ are the same when $a<(d-2)^2/4$.
\end{lemma}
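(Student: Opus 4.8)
The plan is to establish the norm equivalence $\n u\n_{\dot H^1} \sim \n u\n_{\dot H^1_a}$ on $C_c^\infty(\rd)$, which by density of $C_c^\infty(\rd)$ in both completions gives that the two completions are the same space. First I would recall the two identities already computed in the preamble, namely
\begin{equation*}
\n u\n_{\dot H^1}^2 = \n\nabla u\n^2, \qquad \n u\n_{\dot H_a^1}^2 = \n\nabla u\n^2 - a\left\n u/|x|\right\n^2,
\end{equation*}
valid for $u\in C_c^\infty(\rd)$. The goal is then two-sided control of $\n\nabla u\n^2 - a\n u/|x|\n^2$ by $\n\nabla u\n^2$, with constants depending only on $d$ and $a$ (crucially not on $u$).

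I would split into the cases $a\ge 0$ and $a<0$. If $a<0$, then $-a\n u/|x|\n^2 \ge 0$, so immediately $\n u\n_{\dot H_a^1}^2 \ge \n\nabla u\n^2$; for the reverse, Hardy's inequality gives $\n u/|x|\n^2 \le \frac{4}{(d-2)^2}\n\nabla u\n^2$, hence $\n u\n_{\dot H_a^1}^2 \le \big(1 + \frac{4|a|}{(d-2)^2}\big)\n\nabla u\n^2$. If $0\le a < (d-2)^2/4$, set $\lambda = \frac{4a}{(d-2)^2}\in[0,1)$. The upper bound $\n u\n_{\dot H_a^1}^2 \le \n\nabla u\n^2$ is trivial since we subtract a nonnegative term; for the lower bound, Hardy gives $a\n u/|x|\n^2 \le \lambda\n\nabla u\n^2$, so $\n u\n_{\dot H_a^1}^2 \ge (1-\lambda)\n\nabla u\n^2$, and $1-\lambda>0$ precisely because $a<(d-2)^2/4$. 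In both cases we obtain constants $0<c_1\le c_2<\infty$ with $c_1\n u\n_{\dot H^1} \le \n u\n_{\dot H_a^1}\le c_2\n u\n_{\dot H^1}$ for all $u\in C_c^\infty(\rd)$.

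To conclude, since $\dot H^1(\rd)$ and $\dot H_a^1(\rd)$ are by definition the completions of the \emph{same} space $C_c^\infty(\rd)$ under the \emph{equivalent} norms $\n\cdot\n_{\dot H^1}$ and $\n\cdot\n_{\dot H_a^1}$, their completions coincide as sets (and carry equivalent norms): a sequence is Cauchy in one norm iff it is Cauchy in the other, and two Cauchy sequences are equivalent in one norm iff in the other. This identifies $\dot H^1(\rd) = \dot H_a^1(\rd)$.

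The only subtle point is that Hardy's inequality as stated is for $u\in C_c^\infty(\rd)$, which is exactly the space on which we are comparing the norms, so no extra approximation argument is needed there; the sharp Hardy constant $(d-2)^2/4$ is what makes the threshold $a<(d-2)^2/4$ exactly the right one for the lower bound to survive with a positive constant. I expect the main (mild) obstacle is simply being careful that all constants are uniform in $u$, so that the norm equivalence genuinely descends to the completions rather than just holding on the dense subspace.
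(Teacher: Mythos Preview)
Your proposal is correct and is exactly the argument the paper has in mind: the paper simply remarks that the norm identities $\n u\n_{\dot H^1}^2=\n\nabla u\n^2$ and $\n u\n_{\dot H_a^1}^2=\n\nabla u\n^2-a\n u/|x|\n^2$ together with Hardy's inequality give the lemma, without writing out the case split or the completion argument. You have just made those details explicit.
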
 

\begin{lemma}[See Theorem 1.2 in \cite{killip2018sobolev}]\label{NormEq} Let $d\geq 3$ and $a<\left(\frac{d-2}{2}\right)^2$. Then the norms $\n \cdot\n _{\dot{W}^{s,p}}$ and $ \n \cdot\n _{\dot{W}^{s,p}_a}$ are equivalent and hence $\dot{W}^{s,p}_a(\rd)\ = \  \dot{W}^{s,p}(\rd) $ under any of the following two conditions :
\begin{enumerate}
\item[(i)] $p=2,\ -1\le s\le 1$.
\item[(ii)]$\frac{d}{d- q_a}< p< \frac{d}{s+ q_a}$ , $0<s<2$ where $q_a= \frac{d-2}{2}- \sqrt{(\frac{d-2}{2})^2-a}$.
\end{enumerate}
\end{lemma}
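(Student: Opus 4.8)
The final statement is Lemma~\ref{NormEq}, quoted from Killip--Miao--Visan--Zhang--Zheng. Since it is cited as an external result, the honest thing to do is to indicate the structure of its proof rather than attempt a self-contained argument.

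\medskip

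\textbf{Proof plan.} The plan is to reduce the equivalence of the norms $\n\cdot\n_{\dot W^{s,p}}$ and $\n\cdot\n_{\dot W^{s,p}_a}$ to the boundedness on $L^p(\rd)$ of the operators
\[
(-\Delta)^{s/2}(-\mathcal{L}_a)^{-s/2}\qquad\text{and}\qquad (-\mathcal{L}_a)^{s/2}(-\Delta)^{-s/2},
\]
acting on a suitable dense class such as $C_c^\infty(\rd)$ (or $C_c^\infty(\rd\setminus\{0\})$), after which one passes to the completions. First I would treat the base case $s=0$, $p=2$, which is trivial, and the case (i), $p=2$, $-1\le s\le 1$: here one uses the spectral theorem for the self-adjoint operators $-\Delta$ and $-\mathcal{L}_a$ together with the Hardy-inequality bound $0\le \n u\n_{\dot H^1_a}^2\le \n u\n_{\dot H^1}^2$ (and its dual version on $\dot H^{-1}$), and then interpolates/extrapolates the resulting comparison of quadratic forms via the functional calculus to all $-1\le s\le 1$. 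This part is essentially soft.

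\medskip

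The substantive case is (ii), $p\ne 2$. The key analytic input is the heat kernel bound for $e^{t\mathcal{L}_a}$: there exist constants so that, writing $p_t^a(x,y)$ for the kernel,
\[
p_t^a(x,y)\lesssim t^{-d/2}\Bigl(1\vee\tfrac{\sqrt t}{|x|}\Bigr)^{q_a}\Bigl(1\vee\tfrac{\sqrt t}{|y|}\Bigr)^{q_a}\exp\!\Bigl(-\tfrac{|x-y|^2}{ct}\Bigr),
\]
with $q_a=\frac{d-2}{2}-\sqrt{(\frac{d-2}{2})^2-a}$, valid for $a<(\frac{d-2}{2})^2$ (for $a\ge0$ one actually has the Gaussian upper bound without the singular prefactors). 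From this one derives, via subordination (writing $(-\mathcal{L}_a)^{-s/2}$ as a Laplace-type integral of the semigroup) and the analogous formula for $(-\Delta)^{-s/2}$, pointwise kernel estimates for the composed operators $(-\Delta)^{s/2}(-\mathcal{L}_a)^{-s/2}$; the singular weights $|x|^{-q_a}$, $|y|^{-q_a}$ force the restriction $\frac{d}{d-q_a}<p<\frac{d}{s+q_a}$ through a Hardy/Schur-type $L^p$ bound on operators with kernels comparable to $|x|^{-q_a}|x-y|^{-(d-s)}$ near the origin. One then runs the symmetric argument for $(-\mathcal{L}_a)^{s/2}(-\Delta)^{-s/2}$, and combines the two boundedness statements to get the two-sided norm equivalence; density of $C_c^\infty$ in both spaces finishes the identification $\dot W^{s,p}_a=\dot W^{s,p}$.

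\medskip

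The main obstacle is precisely the $L^p$-boundedness of the composed fractional-derivative operators when $p\neq2$: one cannot use Plancherel, and the inverse-square potential is not a lower-order perturbation in any scaling sense, so the argument genuinely requires the sharp heat-kernel bounds above and a careful accounting of the singular weights to pin down the endpoint conditions $\frac{d}{d-q_a}<p<\frac{d}{s+q_a}$ (and $0<s<2$). Since this is exactly the content of \cite[Theorem~1.2]{killip2018sobolev}, in the write-up I would simply invoke that theorem and refer the reader there for the kernel estimates, rather than reproducing the subordination and Schur-test computations.
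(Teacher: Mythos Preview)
Your proposal is appropriate: the paper itself gives no proof of this lemma at all, simply citing \cite{killip2018sobolev} and moving on. Your sketch of the argument from that reference (reduction to $L^p$-boundedness of $(-\Delta)^{s/2}(-\mathcal{L}_a)^{-s/2}$ and its inverse, Hardy inequality and spectral theory for $p=2$, heat kernel bounds and Schur-type estimates for $p\neq 2$) is a faithful outline of the Killip--Miao--Visan--Zhang--Zheng approach and goes well beyond what the paper does; for the purposes of this paper, a bare citation would suffice.
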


%

\begin{Corollary}\label{st_a}
Let $d\geq 3$ and $(q,r),(\tilde{q},\tilde{r})$ are  admissible pairs satisfying $q>\frac{2}{\sqrt{\left(\frac{d-2}{2}\right)^2-a}}$, then 
\begin{itemize}
\item[(i)] $\|\nabla S_a(t)f\|_{L^qL^r} \lesssim \|f\|_{\dot{H}^1},$
\item[(ii)] $\left\| \nabla \int_0^t S_a(t-s)F(s,x) ds \right\|_{L^{q}L^r}\lesssim \|\nabla F\|_{L^{\tilde{q}'}L^{\tilde{r}'}}.$
\end{itemize}
\end{Corollary}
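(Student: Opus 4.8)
The plan is to derive Corollary~\ref{st_a} from the $L^2$-based Strichartz estimates for the propagator $S_a(t)=e^{it\la}$ (cf.\ Theorem~\ref{56}(i), due to Burq--Planchon--Stalker--Tahvildar-Zadeh \cite{burq2003strichartz}) by commuting the fractional power $(-\la)^{1/2}$ through $S_a(t)$ and then exchanging the potential-twisted Sobolev norm for the standard one via Lemma~\ref{NormEq}. Since the Riesz transforms are bounded on $L^p(\rd)$ for $1<p<\infty$, one has $\|\nabla u\|_{L^p}$ comparable to $\|(-\Delta)^{1/2}u\|_{L^p}=\|u\|_{\dot W^{1,p}}$ for every such $p$; and for $p=2$ the potential-twisted norm already coincides with the usual one, $\|u\|_{\dot H^1_a}=\|(-\la)^{1/2}u\|_{L^2}=\|u\|_{\dot H^1}$.

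The arithmetic input I would verify first is that, writing $q_a=\tfrac{d-2}{2}-\sqrt{(\tfrac{d-2}{2})^2-a}$ as in Lemma~\ref{NormEq}(ii), the admissibility relation \eqref{adm} together with the hypothesis $q>2/\sqrt{(\tfrac{d-2}{2})^2-a}$ forces the solution-side exponent $r=\tfrac{2dq}{dq-4}$ to lie in the open interval $\big(\tfrac{d}{d-q_a},\tfrac{d}{1+q_a}\big)$: after clearing denominators, the upper bound $r<\tfrac{d}{1+q_a}$ is literally equivalent to $\tfrac2q<\sqrt{(\tfrac{d-2}{2})^2-a}$, while $r>\tfrac{d}{d-q_a}$ is automatic because $q_a<\tfrac d2$. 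Hence Lemma~\ref{NormEq}(ii) (with $s=1$, $p=r$) applies with a constant independent of $t$, giving $\|\nabla g\|_{L^r}$ comparable to $\|(-\la)^{1/2}g\|_{L^r}$. Applying this to $g=S_a(t)f$, using that $(-\la)^{1/2}$ commutes with $S_a(t)$ by the functional calculus, and then invoking the homogeneous $L^2$-Strichartz estimate for $S_a$, one obtains
\[
\|\nabla S_a(t)f\|_{L^qL^r}\lesssim\|S_a(t)(-\la)^{1/2}f\|_{L^qL^r}\lesssim\|(-\la)^{1/2}f\|_{L^2}=\|f\|_{\dot H^1},
\]
which is (i).

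For (ii) the scheme is identical. After moving $\nabla$ inside via the same application of Lemma~\ref{NormEq}(ii) for the exponent $r$ (uniformly in $t$), and commuting $(-\la)^{1/2}$ past both $S_a(t-s)$ and the time integral, the inhomogeneous $L^2$-Strichartz estimate for $S_a$ yields $\big\|(-\la)^{1/2}\int_0^tS_a(t-s)F(s)\,ds\big\|_{L^qL^r}\lesssim\|(-\la)^{1/2}F\|_{L^{\tilde q'}L^{\tilde r'}}$. It then remains to convert the data term back: $\|(-\la)^{1/2}F\|_{L^{\tilde r'}}=\|F\|_{\dot W^{1,\tilde r'}_a}$ is comparable to $\|F\|_{\dot W^{1,\tilde r'}}$, hence to $\|\nabla F\|_{L^{\tilde r'}}$, which needs $\tilde r'\in\big(\tfrac{d}{d-q_a},\tfrac{d}{1+q_a}\big)$; since $\tilde r\in[2,2^*]$ we have $\tilde r'\in[\tfrac{2d}{d+2},2]$, and both endpoints of this interval already sit strictly inside $\big(\tfrac{d}{d-q_a},\tfrac{d}{1+q_a}\big)$ precisely because $q_a<\tfrac{d-2}{2}$, which holds whenever $a<(\tfrac{d-2}{2})^2$ (at the endpoint $\tilde r'=2$ one simply uses part (i) of Lemma~\ref{NormEq} instead). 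So no hypothesis on $\tilde q$ beyond admissibility is actually used on the data side.

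The main obstacle is purely bookkeeping: making sure the admissibility constraints \eqref{adm} pin the two Lebesgue exponents that matter ($r$ on the solution side, $\tilde r'$ on the data side) into the narrow window where Lemma~\ref{NormEq}(ii) is valid, and recognizing that the stated condition on $q$ is exactly the borderline case of this on the solution side. Once that is in place the estimate is the concatenation of three black boxes---$L^p$-boundedness of Riesz transforms, the norm equivalence of Lemma~\ref{NormEq}, and the scalar Strichartz estimates for $S_a$---together with the observation that $(-\la)^{1/2}$ commutes with the functional calculus of $\la$, all of whose constants are uniform in the time variables and may therefore be pulled outside the space-time norms.
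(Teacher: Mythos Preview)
Your proof is correct and follows exactly the route the paper indicates: the paper's own proof is the single line ``Follows from Theorem~\ref{56} and Lemma~\ref{NormEq},'' and you have written out precisely what this means---commute $(-\la)^{1/2}$ through the propagator, apply the scalar Strichartz estimates of Theorem~\ref{56}(i), and use Lemma~\ref{NormEq} to pass between $\dot W^{1,p}_a$ and $\dot W^{1,p}$ at the exponents $r$ and $\tilde r'$. Your verification that the hypothesis $q>2/\sqrt{(\tfrac{d-2}{2})^2-a}$ is exactly the condition $r<d/(1+q_a)$ needed for Lemma~\ref{NormEq}(ii), and your observation that no additional constraint on $\tilde q$ is needed on the data side (since $\tilde r'\in[\tfrac{2d}{d+2},2]$ always lies in the admissible window when $a<(\tfrac{d-2}{2})^2$), are both correct and fill in details the paper leaves implicit.
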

\begin{proof}[{\bf Proof}]
Follows from Theorem \ref{56} and Lemma \ref{NormEq}.
\end{proof}
\subsection{Interpolation spaces }
Here we recall some results on interpolation spaces. For details on this subject, one can see the book \cite{bergh2012interpolation} of Bergh and L\"ofstr\"om. We define the real interpolation space $(A_0,A_1)_{\theta,\rho}$ ($0<\theta<1,1\leq\rho\leq\infty$) of two Banach spaces $A_0,A_1$ via the norm
\[\n u\n_{(A_0,A_1)_{\theta,\rho}}=\left(\int_0^\infty(t^{-\theta} K(t,u))^\rho\frac{dt}{t}\right)^{1/\rho},\quad K(t,u)=\inf_{u=u_0+u_1}\n u_0\n_{A_0}+t\n u_1\n_{A_1}
\]where the infimum is taken over $(u_0,u_1)\in A_0\times A_1$ such that $u=u_0+u_1$.

\begin{lemma}[Theorem 3.4 in \cite{o1963convolution}]\label{lh}
Let $\frac{1}{r}=\frac{1}{r_0}+\frac{1}{r_1}<1$ and $s\geq1$ is such that $\frac{1}{s}\leq\frac{1}{s_0}+\frac{1}{s_1}$. Then $f\in L^{r_0,s_0}(\rd)$ and $g\in L^{r_1,s_1}(\rd)$ imply $fg\in L^{r,s}(\rd)$ and $\n fg\n_{L^{r,s}}\leq r'\n f\n_{L^{r_0,s_0}}\n g\n_{L^{r_1,s_1}}$.
\end{lemma}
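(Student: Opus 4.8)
The plan is to reduce the claim to a one‑dimensional Hölder inequality for decreasing rearrangements. Writing $f^*,g^*,(fg)^*$ for the decreasing rearrangements on $(0,\infty)$, I would first recall the exact identity $\n h\n_{L^{r,s}}=\n t^{1/r}h^*(t)\n_{L^s((0,\infty),\,dt/t)}$ (this is a Fubini computation that matches the normalizing factor $r^{1/s}$ in the definition \eqref{ls}), so that everything is phrased against the weighted measure $dt/t$.

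The first step is the elementary submultiplicativity bound $(fg)^*(t)\le f^*(t/2)\,g^*(t/2)$. This follows from the set inclusion $\{\,|fg|>\alpha\beta\,\}\subseteq\{\,|f|>\alpha\,\}\cup\{\,|g|>\beta\,\}$, which at the level of distribution functions reads $d_{fg}(\alpha\beta)\le d_f(\alpha)+d_g(\beta)$; taking $\alpha=f^*(t/2)$ and $\beta=g^*(t/2)$ forces $d_{fg}\bigl(f^*(t/2)g^*(t/2)\bigr)\le t$, which is the asserted bound.

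The second step is the Hölder estimate itself. Assuming first the equality case $\tfrac1s=\tfrac1{s_0}+\tfrac1{s_1}$, the rearrangement identity, the submultiplicativity, and the substitution $t\mapsto 2t$ give
\[
\n fg\n_{L^{r,s}}=\Bigl(\int_0^\infty\bigl(t^{1/r}(fg)^*(t)\bigr)^s\,\tfrac{dt}{t}\Bigr)^{1/s}\lesssim\Bigl(\int_0^\infty\bigl(t^{1/r}f^*(t)\,g^*(t)\bigr)^s\,\tfrac{dt}{t}\Bigr)^{1/s}.
\]
Splitting $t^{1/r}=t^{1/r_0}\,t^{1/r_1}$ and applying Hölder's inequality on $\bigl((0,\infty),\tfrac{dt}{t}\bigr)$ with exponents $s_0/s$ and $s_1/s$ bounds the right‑hand side by $\n t^{1/r_0}f^*(t)\n_{L^{s_0}(dt/t)}\,\n t^{1/r_1}g^*(t)\n_{L^{s_1}(dt/t)}=\n f\n_{L^{r_0,s_0}}\n g\n_{L^{r_1,s_1}}$, up to a harmless constant. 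To pass from here to the stated hypothesis $\tfrac1s\le\tfrac1{s_0}+\tfrac1{s_1}$, I would invoke the monotonicity of Lorentz norms in the second index, $\n h\n_{L^{r,s}}\lesssim\n h\n_{L^{r,\bar s}}$ whenever $\bar s\le s$, applied with $\tfrac1{\bar s}=\tfrac1{s_0}+\tfrac1{s_1}$.

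The main obstacle is extracting the \emph{precise} constant $r'$ and seeing where the hypothesis $\tfrac1r<1$ is used — the crude argument above only yields a dimensionless constant (a power of $2$, coming from the dilation $t\mapsto 2t$). Following O'Neil, one avoids that dilation by working with the Hardy average $h^{**}(t)=\tfrac1t\int_0^t h^*(\sigma)\,d\sigma$, for which Hardy's inequality gives $\n h\n_{L^{r,s}}\le\n t^{1/r}h^{**}(t)\n_{L^s(dt/t)}\le r'\,\n h\n_{L^{r,s}}$ — this is exactly the point where $r'<\infty$, i.e. $\tfrac1r<1$, is needed — together with the Hardy–Littlewood rearrangement inequality $\int_0^t(fg)^*\le\int_0^t f^*g^*$ to control $(fg)^{**}$ by quantities built from $f^{**}$ and $g^{**}$. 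Carrying out this comparison and the second‑index reduction so that the final constant comes out to be exactly $r'$ is the only delicate bookkeeping; the remaining steps are routine.
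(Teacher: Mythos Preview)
The paper does not supply its own proof of this lemma --- it is quoted directly from O'Neil's 1963 paper and used as a black box. Your sketch is correct and is in fact O'Neil's original argument: the passage through the maximal average $h^{**}(t)=\tfrac1t\int_0^t h^*$ together with Hardy's inequality is exactly how the sharp constant $r'$ appears, and you have correctly located the role of the hypothesis $r>1$ (so that $r'<\infty$) in that step. One small bookkeeping point: in the final reduction from $\tfrac1s=\tfrac1{s_0}+\tfrac1{s_1}$ to $\tfrac1s\le\tfrac1{s_0}+\tfrac1{s_1}$, applying the monotonicity lemma on the \emph{left} side introduces an extra constant; it is cleaner to absorb the slack on the right by enlarging one of $s_0,s_1$ before the H\"older step (equivalently, insert a trivial $L^\infty(dt/t)$ factor), which keeps the final constant equal to $r'$.
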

\begin{lemma}[See e.g. \cite{bergh2012interpolation,huang2019function}]\label{int}
Let $q_j,r_j,\tilde{q}_j,\tilde{r}_j\in[1,\infty]$, $j=0,1$. Let $q,r$ is such that $\frac{1}{q}=\frac{1-\theta}{q_0}+\frac{\theta}{q_1},$  $\frac{1}{r}=\frac{1-\theta}{r_0}+\frac{\theta}{r_1},$ $\frac{1}{\tilde{q}}=\frac{1-\theta}{\tilde{q}_0}+\frac{\theta}{\tilde{q}_1},$ $\frac{1}{\tilde{r}}=\frac{1-\theta}{\tilde{r}_0}+\frac{\theta}{\tilde{r}_1}$ for some $\theta\in[0,1]$. Then for $\mathcal{T}$ linear,
\begin{enumerate}
\item[(i)] $\mathcal{T}:L^{q_0}L^{r_0}\to L^{\tilde{q}_0}L^{\tilde{r}_0}$ and $\mathcal{T}:L^{q_1}L^{r_1}\to L^{\tilde{q}_1}L^{\tilde{r}_1}$ imply $\mathcal{T}:L^qL^r\to L^{\tilde{q}}L^{\tilde{r}}$. 
\item[(ii)] $\mathcal{T}:L^{q_0}L^{r_0,2}\to L^{\tilde{q}_0}L^{\tilde{r}_0,2}$ and $\mathcal{T}:L^{q_1}L^{r_1,2}\to L^{\tilde{q}_1}L^{\tilde{r}_1,2}$ imply $\mathcal{T}:L^qL^{r,2}\to L^{\tilde{q}}L^{\tilde{r},2}$.
\end{enumerate}
\end{lemma}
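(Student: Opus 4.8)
The final statement is Lemma~\ref{int}, the interpolation lemma for mixed Lebesgue and Lorentz spaces.

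\medskip

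The plan is to deduce both parts from the standard bilinear (Stein--Weiss / Riesz--Thorin style) interpolation theorem for the real method, together with the description of the interpolation spaces of mixed-norm Lebesgue and Lorentz spaces that can be found in \cite{bergh2012interpolation,huang2019function}. The essential point is that the scale $\{L^{q}(\R,L^{r,\sigma}(\rd))\}$ behaves well under real interpolation when one interpolates along a line on which all the exponents vary compatibly.

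\medskip

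For part (ii), first I would recall the two ingredients. The first is that for a fixed exponent pair, real interpolation of Lorentz spaces reproduces Lorentz spaces: $(L^{r_0,s_0},L^{r_1,s_1})_{\theta,s}=L^{r,s}$ whenever $r_0\neq r_1$, $\frac1r=\frac{1-\theta}{r_0}+\frac{\theta}{r_1}$, and this is exactly \cite[Theorem 5.3.1]{bergh2012interpolation}; in particular $(L^{r_0,2},L^{r_1,2})_{\theta,2}=L^{r,2}$. The second is the corresponding statement for the outer $L^q(\R)$ integration: iterating the vector-valued version of the same theorem gives $\big(L^{q_0}(\R,L^{r_0,2}),L^{q_1}(\R,L^{r_1,2})\big)_{\theta,2}$ embeds into (is, up to equivalent norms) a mixed Lorentz space whose inner index is $L^{r,2}$ and whose outer index is $L^{q,2}$, from which the continuous embedding $L^q(\R,L^{r,2})\hookrightarrow L^{q,2}(\R,L^{r,2})$ (valid when $q\le 2$) or the reverse inclusion handling (when $q\ge 2$) lets one land in $L^q L^{r,2}$. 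Actually the cleanest route, and the one I would take to avoid the index bookkeeping, is: apply the abstract bilinear interpolation theorem \cite[Theorem 3.13.1 (b)]{bergh2012interpolation} to $\mathcal T$ viewed as acting between the couples $\big(L^{q_0}L^{r_0,2},L^{q_1}L^{r_1,2}\big)$ and $\big(L^{\tilde q_0}L^{\tilde r_0,2},L^{\tilde q_1}L^{\tilde r_1,2}\big)$; this yields boundedness $\mathcal T:(\cdot)_{\theta,\rho}\to(\cdot)_{\theta,\rho}$ for any $\rho$. Choosing $\rho=2$ and invoking the identification $(L^{q_0}L^{r_0,2},L^{q_1}L^{r_1,2})_{\theta,2}=L^{q}L^{r,2}$ (with the analogous identity on the target side) — which follows from Minkowski/Fubini-type arguments reducing the mixed-norm interpolation to the scalar Lorentz interpolation, precisely the content recorded in \cite{huang2019function} — finishes part (ii). Part (i) is the same argument with $L^{r_j}=L^{r_j,r_j}$; here one takes $\rho$ equal to the running value along the segment (or simply uses the classical Riesz--Thorin/Stein complex-interpolation statement for mixed-norm Lebesgue spaces, for which no Lorentz refinement is needed), so (i) is in fact the more elementary of the two and can even be quoted directly.

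\medskip

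The main obstacle — really the only nontrivial point — is justifying the identification of the real interpolation space of a \emph{mixed-norm} family $L^{q_j}(\R,L^{r_j,2}(\rd))$ with the ``diagonal'' mixed Lorentz space $L^{q}(\R,L^{r,2}(\rd))$ when \emph{both} the inner and the outer indices move simultaneously; the one-parameter statements in \cite{bergh2012interpolation} handle one index at a time, and one must either iterate carefully (fixing the inner space, interpolating the outer $L^q$, then vice versa, and checking the two procedures are compatible via the reiteration theorem) or appeal to the vector-valued interpolation result with values in the intermediate Lorentz space. Since the excerpt explicitly allows us to cite \cite{bergh2012interpolation,huang2019function}, I would simply state the needed identity with a reference and spend the proof on assembling the bilinear interpolation of $\mathcal T$; no computation beyond tracking the linear relations among $1/q,1/r,1/\tilde q,1/\tilde r$ is required, and those are exactly the hypotheses as stated.
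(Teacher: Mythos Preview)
The paper does not give a proof of this lemma at all; it is stated with the parenthetical ``See e.g.\ \cite{bergh2012interpolation,huang2019function}'' and then used as a black box. Your proposal therefore goes well beyond what the paper does, sketching an actual argument via the real interpolation method and the identification of the interpolation spaces of mixed-norm families.

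A couple of small comments on your sketch. First, you invoke a ``bilinear'' interpolation theorem, but $\mathcal T$ here is linear; what you actually need is just the defining property of an interpolation functor, namely that $\mathcal T:(A_0,A_1)_{\theta,\rho}\to(B_0,B_1)_{\theta,\rho}$ whenever $\mathcal T:A_j\to B_j$ for $j=0,1$. Second, the identity $(L^{q_0}L^{r_0,2},L^{q_1}L^{r_1,2})_{\theta,2}=L^{q}L^{r,2}$ with \emph{both} indices moving is, as you yourself flag, the only nontrivial point, and it is not a direct consequence of the one-parameter results in \cite{bergh2012interpolation}; it genuinely requires a vector-valued or iterated argument of the sort recorded in \cite{huang2019function}. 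Since you end by citing that reference for precisely this step, your treatment is consistent with the paper's --- both ultimately defer the substance to the cited sources. For part (i) the classical complex-method Riesz--Thorin for mixed norms is indeed the cleanest route, as you note.
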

\begin{lemma}[Section 3.13, exercise 5(b) in \cite{bergh2012interpolation} and Theorems 1, 2 in  \cite{janson1988interpolation}]\label{18}
Let $A_0,A_1,B_0,B_1,C_0,C_1$ are Banach spaces and $\mathcal{T}$ be a bilinear operator such that
\begin{align*}\mathcal{T}:\begin{cases}
A_0\times B_0\longrightarrow C_0,\\
A_0\times B_1\longrightarrow C_1,\\
A_1\times B_0\longrightarrow C_1,
\end{cases}
\end{align*}
then whenever $0<\theta_0,\theta_1<\theta=\theta_0+\theta_1<1$, $1\leq p,q,r\leq\infty$ and $1\leq\frac{1}{p}+\frac{1}{q}$, we have
$$\mathcal{T}:(A_0,A_1)_{\theta_0,pr}\times (B_0,B_1)_{\theta_1,qr}\longrightarrow(C_0,C_1)_{\theta,r}.$$
\end{lemma}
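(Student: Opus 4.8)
\textbf{Proof plan for Lemma \ref{18}.}
The statement is a multilinear (here bilinear) real-interpolation result of the Lions--Peetre type, and the plan is to prove it by the standard $K$-functional / $J$-functional machinery, reducing the bilinear case to scalar estimates on the $K$-functionals. First I would recall that for a bilinear map $\mathcal T$ bounded $A_i\times B_j\to C_{i+j}$ (indices read mod the diagonal, so $A_0\times B_0\to C_0$ and the three listed mixed cases land in $C_1$), one has, for $a\in A_0\cap A_1$ and $b\in B_0\cap B_1$, the pointwise bound on $K$-functionals
\[
K(t,\mathcal T(a,b);C_0,C_1)\ \lesssim\ \inf\big\{\,\n a_0\n_{A_0}\n b_0\n_{B_0}\ +\ t\big(\n a_0\n_{A_0}\n b_1\n_{B_1}+\n a_1\n_{A_1}\n b_0\n_{B_0}\big)\,\big\},
\]
the infimum over all decompositions $a=a_0+a_1$, $b=b_0+b_1$ with $a_i\in A_i$, $b_j\in B_j$ (the $A_1\times B_1\to C_1$ piece is \emph{not} assumed, so that term is deliberately absent; the contributions of $a_1b_1$ must be routed through one of the three given mappings, which is exactly the place the hypothesis $\theta_0+\theta_1=\theta<1$ and $1/p+1/q\ge1$ will be needed). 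Splitting the infimum over $b$ first at scale $s$ and over $a$ at a scale to be optimized, one gets
\[
K(t,\mathcal T(a,b);C_0,C_1)\ \lesssim\ K(s,a;A_0,A_1)\,\big(\n b\n_{B_0}+ (t/s)\,\n b\n_{B_1}\big)
\]
and symmetrically, and then a judicious choice $s=s(t)$ (for a fixed norming pair, $s\sim t^{\theta_1/\theta}$ up to the $J$-functional normalization) yields the two-sided control one needs.

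Next I would pass from the $K$-functional pointwise bound to the norm inequality on the interpolation spaces. Take $a\in(A_0,A_1)_{\theta_0,pr}$ and $b\in(B_0,B_1)_{\theta_1,qr}$; by density it suffices to treat $a\in A_0\cap A_1$, $b\in B_0\cap B_1$. Using the fundamental lemma of interpolation theory one writes $a=\sum_\nu a_\nu$, $b=\sum_\mu b_\mu$ with discretized $J$-functional estimates $\sum_\nu (2^{-\nu\theta_0}J(2^\nu,a_\nu))^{pr}\lesssim \n a\n^{pr}_{(A_0,A_1)_{\theta_0,pr}}$ and likewise for $b$ with exponent $qr$. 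Then $\mathcal T(a,b)=\sum_{\nu,\mu}\mathcal T(a_\nu,b_\mu)$, and $J(2^{\nu+\mu},\mathcal T(a_\nu,b_\mu);C_0,C_1)\lesssim J(2^\nu,a_\nu)J(2^\mu,b_\mu)$ by the three boundedness hypotheses (here again one uses that there is no $A_1\times B_1$ hypothesis, so one estimates $\mathcal T(a_\nu,b_\mu)$ in $C_0$ using $A_0\times B_0$ and in $C_1$ using whichever mixed map is available; the factor $2^{\min(\nu,\mu)}$ or $2^{\max(\nu,\mu)}$ discrepancy that appears is summable precisely because $\theta_0,\theta_1>0$ and $\theta<1$). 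Grouping by $\lambda=\nu+\mu$ and applying the $J$-method description of $(C_0,C_1)_{\theta,r}$, the remaining task is the discrete inequality
\[
\Big\|\,\sum_{\nu+\mu=\lambda} c_\nu d_\mu\,\Big\|_{\ell^r_\lambda(2^{-\lambda\theta})}\ \lesssim\ \n c\n_{\ell^{pr}_\nu(2^{-\nu\theta_0})}\,\n d\n_{\ell^{qr}_\mu(2^{-\mu\theta_1})},
\]
which, after absorbing the geometric weights into the sequences (using $\theta=\theta_0+\theta_1$), is a weighted Young-type convolution inequality; it holds because $1/(pr)+1/(qr)\le 1/r$ is implied by $1/p+1/q\ge 1$, so $\ell^{pr}\ast\ell^{qr}\hookrightarrow\ell^r$ by Young's convolution inequality on $\Z$.

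The single genuinely delicate point — and the one I would isolate as the \textbf{main obstacle} — is the summation of the ``off-diagonal'' terms $\mathcal T(a_\nu,b_\mu)$ for which $\nu$ and $\mu$ are far apart, because the $A_1\times B_1\to C_1$ mapping is missing from the hypotheses. One has to check that when, say, $\nu$ is much larger than $\mu$, the term can be estimated through $A_1\times B_0\to C_1$ with a gain $2^{-c|\nu-\mu|}$ coming from the interpolation exponents, and symmetrically; the strict inequalities $0<\theta_0,\theta_1$ and $\theta_0+\theta_1<1$ are exactly what make both the near-diagonal and far-diagonal geometric series converge, and the constraint $1/p+1/q\ge1$ is what lets Young's inequality close the final sequence estimate. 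Once this bookkeeping is done carefully, the three conclusions $\mathcal T:(A_0,A_1)_{\theta_0,pr}\times(B_0,B_1)_{\theta_1,qr}\to(C_0,C_1)_{\theta,r}$ follows. (Alternatively, one may simply invoke \cite{janson1988interpolation} or the exercise in \cite{bergh2012interpolation} and only sketch the above; since the lemma is quoted with these references, I would state it with a short proof sketch along these lines and refer the reader there for the full combinatorial details.)
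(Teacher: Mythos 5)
The paper offers no proof of this lemma: it is imported verbatim, with the proof delegated to Bergh--L\"ofstr\"om (Section 3.13, Exercise 5(b)) and to Janson's paper, so there is no in-paper argument to compare yours against. Your overall strategy --- the fundamental lemma of interpolation to decompose $a$ and $b$, bilinear estimates on the pieces $\mathcal T(a_\nu,b_\mu)$, and a discrete summation --- is the standard Lions--Peetre/Janson route and is the right one; you also correctly identify the absence of an $A_1\times B_1\to C_1$ hypothesis as the crux.

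Two of your steps, however, would not survive being made precise. First, the claimed estimate $J(2^{\nu+\mu},\mathcal T(a_\nu,b_\mu);C_0,C_1)\lesssim J(2^\nu,a_\nu)\,J(2^\mu,b_\mu)$ does not follow from the three hypotheses: bounding $2^{\nu+\mu}\Vert\mathcal T(a_\nu,b_\mu)\Vert_{C_1}$ through $A_0\times B_1\to C_1$ leaves an uncontrolled factor $2^{\nu}\Vert a_\nu\Vert_{A_0}$, and through $A_1\times B_0\to C_1$ an uncontrolled $2^{\mu}\Vert b_\mu\Vert_{B_0}$. What the hypotheses actually yield is $K(t,\mathcal T(a_\nu,b_\mu))\lesssim J(2^\nu,a_\nu)\,J(2^\mu,b_\mu)\min\left(1,t\,2^{-\max(\nu,\mu)}\right)$, i.e.\ the effective scale of $\mathcal T(a_\nu,b_\mu)$ is $2^{\max(\nu,\mu)}$, not $2^{\nu+\mu}$; this is precisely the quantitative expression of the missing fourth mapping. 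Second, and consequently, the closing step cannot be Young's convolution inequality. The hypothesis $1/p+1/q\geq1$ gives $1/(pr)+1/(qr)\geq1/r$ (note your inequality is also written with the wrong sense), which is a H\"older-type condition; the embedding $\ell^{pr}\ast\ell^{qr}\hookrightarrow\ell^{r}$ would instead require $1/(pr)+1/(qr)\geq1+1/r$, i.e.\ $1/p+1/q\geq r+1$, far stronger than what is assumed. The genuine content of the lemma is the gain of summability from $pr,qr$ in the sources down to $r$ in the target (e.g.\ $p=q=2$: sources $\ell^{2r}$, target $\ell^{r}$), and this is obtained by grouping the terms by $\max(\nu,\mu)$, exploiting the geometric decay furnished by $0<\theta_0,\theta_1$ and $\theta<1$ off the diagonal, and closing with a H\"older pairing in the surviving index --- not by a convolution embedding. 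With those two corrections your outline matches the argument in the cited sources; since the lemma is in any case quoted by reference, a corrected sketch plus the citations is entirely adequate.
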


\begin{lemma}[Theorems 5.2.1 and 5.6.1 in \cite{bergh2012interpolation}]\label{19}We have the following interpolation results:
\begin{enumerate}
\item[(i)] Let $r_0<r<r_1$ and $0<\theta<1$ be such that $\frac{1}{r}=\frac{1-\theta}{r_0}+\frac{\theta}{r_1}$, then for $r_0<p$ we have
$(L^{r_0},L^{r_1})_{\theta,p}=L^{r,p}$
\item[(ii)] Let $\beta_0<\beta_1$ and $0<\theta<1$ be such that $(1-\theta)\beta_0+\theta\beta_1=\beta$, then $(l_\infty^{\beta_0},l_\infty^{\beta_1})_{\theta,1}=l_1^\beta$.
\end{enumerate}
\end{lemma}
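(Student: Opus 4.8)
The plan is to prove both identities by the classical $K$-functional method: for the relevant couple $(A_0,A_1)$ one computes an equivalent of $K(t,\cdot)=K(t,\cdot;A_0,A_1)$, substitutes it into the interpolation norm $\|u\|_{(A_0,A_1)_{\theta,\rho}}=\big(\int_0^\infty(t^{-\theta}K(t,u))^\rho\,\tfrac{dt}{t}\big)^{1/\rho}$, and evaluates the resulting weighted integral with Hardy-type inequalities. Throughout I would use the rearrangement description $\|f\|_{L^{r,p}}\asymp\big(\int_0^\infty(s^{1/r}f^*(s))^p\,\tfrac{ds}{s}\big)^{1/p}$ ($\sup$ when $p=\infty$), with $f^*$ the decreasing rearrangement of $f$, and likewise $\|a\|_{l_p^\beta}\asymp\big(\sum_{j}(2^{j\beta}|a_j|)^p\big)^{1/p}$.

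For part (ii), the decisive observation is that $\|\cdot\|_{l_\infty^{\beta_0}}$ and $\|\cdot\|_{l_\infty^{\beta_1}}$ are suprema over $j\in\Z$, so the competing decomposition $a=b+c$ decouples across coordinates: assigning $a_j$ entirely to $b$ when $2^{j\beta_0}\le t\,2^{j\beta_1}$ and entirely to $c$ otherwise, I would check
\[
\sup_{m\in\Z}\min\!\big(2^{m\beta_0},t\,2^{m\beta_1}\big)|a_m|\ \le\ K\big(t,a;l_\infty^{\beta_0},l_\infty^{\beta_1}\big)\ \le\ 2\sum_{m\in\Z}\min\!\big(2^{m\beta_0},t\,2^{m\beta_1}\big)|a_m|.
\]
For the upper estimate I would insert the right-hand sum into the $(\theta,1)$-norm, interchange sum and integral, and compute $\int_0^\infty t^{-\theta}\min(2^{j\beta_0},t\,2^{j\beta_1})\,\tfrac{dt}{t}$ by splitting the integral at $t_j:=2^{j(\beta_0-\beta_1)}$; since $\beta_0<\beta_1$ and $0<\theta<1$ both pieces converge and their sum is a fixed multiple of $2^{j\beta}$, giving $\|a\|_{(\cdot)_{\theta,1}}\lesssim\|a\|_{l_1^\beta}$. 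For the matching lower estimate I would use that $j\mapsto t_j$ is strictly decreasing, so the intervals $I_j:=(t_j,t_{j-1})$ tile $(0,\infty)$; on $I_j$ one has $\sup_m\min(2^{m\beta_0},t\,2^{m\beta_1})|a_m|\ge 2^{j\beta_0}|a_j|$, and $\int_{I_j}t^{-\theta-1}\,dt\asymp 2^{-j\theta(\beta_0-\beta_1)}$, whence $\|a\|_{(\cdot)_{\theta,1}}\gtrsim\sum_j 2^{j\beta}|a_j|$. Together these prove $(l_\infty^{\beta_0},l_\infty^{\beta_1})_{\theta,1}=l_1^\beta$.

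For part (i) the corresponding $K$-functional is the crux. I would first establish Holmstedt's formula for the couple $(L^{r_0},L^{r_1})$ on $\rd$: with $\alpha:=\big(\tfrac1{r_0}-\tfrac1{r_1}\big)^{-1}$,
\[
K\big(t,f;L^{r_0},L^{r_1}\big)\ \asymp\ \Big(\int_0^{t^{\alpha}}f^*(s)^{r_0}\,ds\Big)^{1/r_0}+t\Big(\int_{t^{\alpha}}^{\infty}f^*(s)^{r_1}\,ds\Big)^{1/r_1},
\]
the near-optimal decomposition being truncation of $f$ at height $f^*(t^{\alpha})$, with the two pieces controlled by the layer-cake identities for $L^{r_0}$ and $L^{r_1}$. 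Substituting this into $\|f\|_{(L^{r_0},L^{r_1})_{\theta,p}}^p$, changing variables $u=t^{\alpha}$, and applying Hardy's inequality to the first term (exponent $p/r_0$) and the dual Hardy inequality to the second (exponent $p/r_1$, using the monotonicity of $f^*$) collapses the inner integrals and leaves $\int_0^\infty u^{p/r}f^*(u)^p\,\tfrac{du}{u}\asymp\|f\|_{L^{r,p}}^p$ with $\tfrac1r=\tfrac{1-\theta}{r_0}+\tfrac{\theta}{r_1}$; here the hypothesis $r_0<p$ is exactly what makes the first Hardy inequality available, with admissible exponent $p/r_0\ge1$. The reverse bound $\|f\|_{L^{r,p}}\lesssim\|f\|_{(L^{r_0},L^{r_1})_{\theta,p}}$ is easy: keeping only the first summand of the Holmstedt expression and using $\int_0^{t^\alpha}f^*(s)^{r_0}\,ds\ge t^\alpha f^*(t^\alpha)^{r_0}$ already yields $\int_0^\infty u^{p/r}f^*(u)^p\,\tfrac{du}{u}$ up to a constant.

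The main obstacle is precisely the part (i) step: proving Holmstedt's formula (i.e.\ that truncation at the level $f^*(t^\alpha)$ is essentially optimal) and then verifying that the weighted Hardy inequalities genuinely apply with the indicated exponents — this is where the hypothesis $r_0<p$ must be used and where a naive pointwise argument fails, since the $L^{r_1}$-tail of $f^*$ need not be controlled pointwise by $f^*$ itself. Part (ii), by contrast, is essentially bookkeeping once the coordinatewise splitting is observed. Since all of this is standard, an acceptable alternative is simply to invoke \cite[Theorems 5.2.1 and 5.6.1]{bergh2012interpolation}.
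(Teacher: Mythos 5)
Your outline is correct and is essentially the argument the paper itself defers to: Lemma \ref{19} is stated without proof as a direct quotation of Theorems 5.2.1 and 5.6.1 of Bergh--L\"ofstr\"om, whose proofs proceed exactly via the $K$-functional/Holmstedt computation with Hardy inequalities for (i) and the coordinatewise splitting of the $K$-functional for the couple $(l_\infty^{\beta_0},l_\infty^{\beta_1})$ for (ii), as you describe. Nothing further is needed beyond the citation.
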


\begin{lemma}[Calder\'on, see e.g. Lemma 2.5 in \cite{o1963convolution}]\label{cld}
Let $1<r<\infty$ and $s>\sigma$. Then
$\n v\n_{L^{r,s}}\leq(\frac{\sigma}{r})^{{1}/{\sigma}-{1}/{s}}\n v\n_{L^{r,\sigma}}$.
\end{lemma}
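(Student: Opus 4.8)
The plan is to reduce first to the endpoint comparison between $L^{r,\sigma}$ and $L^{r,\infty}$, and then recover the $L^{r,s}$ bound for $\sigma<s<\infty$ by a H\"older-type splitting inside the defining integral. Throughout write $m(t)=\mu\{|v|>t\}$ for the distribution function, which is non-increasing; by the definition \eqref{ls} we have $\n v\n_{L^{r,\sigma}}^\sigma=r\int_0^\infty t^{\sigma-1}m(t)^{\sigma/r}\,dt$ (and the analogous identity for $s$), while $\n v\n_{L^{r,\infty}}=\sup_{t>0}t\,m(t)^{1/r}$. We may assume $\n v\n_{L^{r,\sigma}}<\infty$, otherwise there is nothing to prove.

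First I would prove $\n v\n_{L^{r,\infty}}\le(\sigma/r)^{1/\sigma}\n v\n_{L^{r,\sigma}}$. Fix $t_0>0$; since $m(t)\ge m(t_0)$ for $0<t\le t_0$,
\[
\n v\n_{L^{r,\sigma}}^\sigma\ \ge\ r\int_0^{t_0}t^{\sigma-1}m(t)^{\sigma/r}\,dt\ \ge\ r\,m(t_0)^{\sigma/r}\int_0^{t_0}t^{\sigma-1}\,dt\ =\ \frac{r}{\sigma}\,t_0^\sigma m(t_0)^{\sigma/r},
\]
so $t_0\,m(t_0)^{1/r}\le(\sigma/r)^{1/\sigma}\n v\n_{L^{r,\sigma}}$, and taking the supremum over $t_0>0$ gives the claim. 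This already settles the case $s=\infty$, where $1/\sigma-1/s=1/\sigma$.

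For $\sigma<s<\infty$ I would split the integrand so as to factor out the $L^{r,\infty}$ quantity:
\[
\n v\n_{L^{r,s}}^s\ =\ r\int_0^\infty\big(t\,m(t)^{1/r}\big)^{s-\sigma}\,t^{\sigma-1}m(t)^{\sigma/r}\,dt\ \le\ \n v\n_{L^{r,\infty}}^{\,s-\sigma}\cdot r\int_0^\infty t^{\sigma-1}m(t)^{\sigma/r}\,dt\ =\ \n v\n_{L^{r,\infty}}^{\,s-\sigma}\,\n v\n_{L^{r,\sigma}}^{\,\sigma},
\]
i.e. $\n v\n_{L^{r,s}}\le\n v\n_{L^{r,\infty}}^{1-\sigma/s}\n v\n_{L^{r,\sigma}}^{\sigma/s}$. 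Combining this with the endpoint estimate from the previous paragraph,
\[
\n v\n_{L^{r,s}}\ \le\ \big((\sigma/r)^{1/\sigma}\n v\n_{L^{r,\sigma}}\big)^{1-\sigma/s}\,\n v\n_{L^{r,\sigma}}^{\sigma/s}\ =\ (\sigma/r)^{\frac1\sigma-\frac1s}\,\n v\n_{L^{r,\sigma}},
\]
since $\tfrac1\sigma(1-\tfrac\sigma s)=\tfrac1\sigma-\tfrac1s$, which is exactly the asserted inequality.

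The argument is essentially mechanical, and there is no real obstacle: the only points needing a little care are keeping track of the normalization constant $r^{1/s}$ built into \eqref{ls} (which is what makes the final constant come out as $(\sigma/r)^{1/\sigma-1/s}$ rather than a dimensional mismatch), and dispatching the degenerate cases $\n v\n_{L^{r,\sigma}}=\infty$ and $m(t_0)\in\{0,\infty\}$, all of which are trivial. Note that the hypothesis $1<r<\infty$ is not used in the estimate itself; it only ensures, together with the standing convention $s,\sigma\ge1$, that the quantities in \eqref{ls} are equivalent to genuine norms.
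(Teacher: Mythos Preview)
Your proof is correct. Note that the paper does not actually give a proof of this lemma; it is merely stated with a reference to O'Neil \cite{o1963convolution} and used as a known fact in the proof of Corollary~\ref{cld1}. Your argument --- first bounding $\n v\n_{L^{r,\infty}}$ by $(\sigma/r)^{1/\sigma}\n v\n_{L^{r,\sigma}}$ via monotonicity of the distribution function, then interpolating through the pointwise factorization $t^{s-1}m(t)^{s/r}=(t\,m(t)^{1/r})^{s-\sigma}\cdot t^{\sigma-1}m(t)^{\sigma/r}$ --- is the standard elementary proof of this embedding, and the constant $(\sigma/r)^{1/\sigma-1/s}$ comes out exactly right with the normalization \eqref{ls} used in the paper.
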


\begin{Corollary}\label{cld1}
The inequality  \eqref{37} is stronger than the inequality \eqref{str} for $2\leq r<\infty$ and $2\leq\tilde{r}<\infty$. 
\end{Corollary}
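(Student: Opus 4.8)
The plan is to read off the claim directly from Calder\'on's inequality (Lemma \ref{cld}) applied in the spatial variable, together with Minkowski's inequality to handle the time variable. Recall that ``\eqref{37} is stronger than \eqref{str}'' means: the estimate
$\n u\n_{L^qL^{r,2}}\lesssim\n F\n_{L^{\tilde q'}L^{\tilde r',2}}$
implies the estimate $\n u\n_{L^qL^r}\lesssim\n F\n_{L^{\tilde q'}L^{\tilde r'}}$ whenever $2\le r<\infty$ and $2\le\tilde r<\infty$. So it suffices to establish the two one-sided continuous inclusions
$$\n u\n_{L^qL^r}\lesssim\n u\n_{L^qL^{r,2}},\qquad \n F\n_{L^{\tilde q'}L^{\tilde r',2}}\lesssim\n F\n_{L^{\tilde q'}L^{\tilde r'}},$$
and then chain them with the assumed \eqref{37}.

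First I would handle the left-hand inclusion. For the spatial norm, since $2\le r<\infty$ we may take $s=r>\sigma=2$ in Lemma \ref{cld}, which gives $\n v\n_{L^r}=\n v\n_{L^{r,r}}\le(2/r)^{1/2-1/r}\n v\n_{L^{r,2}}$ pointwise in $t$ (using $L^{r,r}=L^r$). Raising to the power $q$ and integrating in $t$ over $\R$ yields $\n u\n_{L^qL^r}\le(2/r)^{1/2-1/r}\n u\n_{L^qL^{r,2}}$, which is the desired bound with an explicit constant.

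For the right-hand inclusion I would argue on the dual side: here the exponents are $\tilde r'$ and $\tilde r$, and since $2\le\tilde r<\infty$ we have $1<\tilde r'\le2$. Now I use Lemma \ref{cld} with the roles reversed — i.e. with second index $2$ being \emph{larger} than... — wait, that direction is the subtle point, so let me be careful: Calder\'on's inequality only lets one pass from a \emph{smaller} second index to a \emph{larger} one, so $\n v\n_{L^{\tilde r',2}}\le C\n v\n_{L^{\tilde r',\sigma}}$ requires $\sigma\le2$; taking $\sigma=\tilde r'\le2$ (valid since $\tilde r\ge2$) and using $L^{\tilde r',\tilde r'}=L^{\tilde r'}$ gives $\n v\n_{L^{\tilde r',2}}\le(\tilde r'/\tilde r')^{1/\tilde r'-1/2}\n v\n_{L^{\tilde r'}}=\n v\n_{L^{\tilde r'}}$ pointwise in $t$. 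Raising to the power $\tilde q'$ and integrating in $t$ gives $\n F\n_{L^{\tilde q'}L^{\tilde r',2}}\le\n F\n_{L^{\tilde q'}L^{\tilde r'}}$. Composing the three inequalities $\n u\n_{L^qL^r}\lesssim\n u\n_{L^qL^{r,2}}\lesssim\n F\n_{L^{\tilde q'}L^{\tilde r',2}}\lesssim\n F\n_{L^{\tilde q'}L^{\tilde r'}}$ completes the proof.

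The only genuine subtlety — what I would call the main obstacle, though it is minor — is getting the direction of Calder\'on's monotonicity right on the dual exponents: one must use that $r,\tilde r\ge2$ is precisely what makes $2$ sit on the ``correct side'' of both $r$ (so $L^{r,2}\hookrightarrow L^{r,r}=L^r$) and $\tilde r'$ (so $L^{\tilde r'}=L^{\tilde r',\tilde r'}\hookrightarrow L^{\tilde r',2}$), which is why the hypothesis $2\le r<\infty$, $2\le\tilde r<\infty$ appears in the statement; the finiteness of $r,\tilde r$ is needed so that Lemma \ref{cld}'s requirement $r<\infty$ applies to both $r$ and $\tilde r'$. Everything else is the routine Minkowski-type passage from the pointwise-in-$t$ spatial inequality to the mixed space-time norm.
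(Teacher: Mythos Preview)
Your proof is correct and follows essentially the same approach as the paper: both apply Calder\'on's Lemma \ref{cld} pointwise in $t$ to obtain $\n u\n_{L^qL^r}\lesssim\n u\n_{L^qL^{r,2}}$ (using $r\ge2$) and $\n F\n_{L^{\tilde q'}L^{\tilde r',2}}\le\n F\n_{L^{\tilde q'}L^{\tilde r'}}$ (using $\tilde r'\le2$), then chain these with \eqref{37}. Your exposition is simply more detailed about why the direction of the Lorentz embedding is correct on each side; one minor remark is that the passage from the pointwise-in-$t$ inequality to the mixed norm is just monotonicity of the $L^q$-integral, not Minkowski per se.
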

\begin{proof}[{\bf Proof}]
By Lemma \ref{cld} above, 
for $1<r,\tilde{r}<\infty$ we have 
$\n v\n_{L^qL^r}=\n v\n_{L^qL^{r,r}}\leq\left({2}/{r}\right)^{{1}/{2}-{1}/{r}}\n v\n_{L^qL^{r,2}}$ and 
$\n F\n_{L^{\tilde{q}'}L^{\tilde{r}',2}}\leq\left({\tilde{r}'}/{\tilde{r}'}\right)^{{1}/{\tilde{r}'}-{1}/{2}}\n F\n_{L^{\tilde{q}'}L^{\tilde{r}',\tilde{r}'}}=\n F\n_{L^{\tilde{q}'}L^{\tilde{r}'}}.$ Since $r\geq2$ and $2\geq\tilde{r}'$, the result follows.
\end{proof}

\subsection{Compactness in Hilbert space and Dislocation}
Here we discuss  relation between the compactness in a Hilbert space $H$ and compactness in its quotient space $H/G$ associated with a 
Dislocation group $G$ in $H$. 
This will be useful in Section \ref{s4}. 
Let $H$ be a Hilbert space and $G$ be a group of operators in $H$. Then we define an equivalence relation $\sim$ on $H$ by $x\sim y$ if $x=gy$ for some $g\in G$. Let $H/G$ denote the quotient space $H/\sim$. Before defining the dislocation group we recall the notions of strong and weak convergence.
\begin{dfn}[Strong Convergence]
We say a sequence $\{g_n\}$ in $G$ converges to $g\in G$ in the strong operator topology if $\n g_nx-gx\n_H\to0$ for all $x\in H$.
\end{dfn}
\begin{dfn}[Weak convergence]
We say a sequence $\{g_n\}$ in $G$ converges to $g\in G$ in the weak operator topology if $\langle g_nx,y\rangle_H\to\langle gx,y\rangle_H$ for all $x,y\in H$.
\end{dfn}
\begin{dfn}[Dislocation group]
Let $H$ be a Hilbert space and $G$ be a group of unitary operators in $H$. Then $G$ is called a dislocation group if, for any $\{g_n\}$ in $G$ not converging to zero in the weak operator topology on $G$, there exists a subsequence $\{g_{n_k}\}$ of $\{g_n\}$ and a non zero $g\in G$ such that $g_{n_k}\to g$ in the strong operator topology on $G$. 
\end{dfn}

Now we state the main result in this subsection without proof relating compactness in $H$ and in $H/G$ which will be used in Subsection \ref{scat} to achieve the `almost' compactness of the flow of a minimal blow-up solution.
\begin{theorem}\label{63}
Let $G$ be a dislocation in a Hilbert space $H$. Then for any compact set $\K$ in the quotient space $H/G$ with the quotient topology, there exists a compact set $K$ in $H$ such that $\K=P(K)$, where $P:H\to H/G$ is the standard canonical projection. 
\end{theorem}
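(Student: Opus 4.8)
\textbf{Proof proposal for Theorem \ref{63}.}

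The plan is to produce the compact set $K$ in $H$ by choosing, in a measurable/continuous way, a representative of each equivalence class in the given compact set $\K\subset H/G$, together with enough extra points to close up the selection. First I would use the fact that $P:H\to H/G$ is continuous, open, and surjective, so it is a quotient map; in particular every $\k\in\K$ has at least one preimage. The naive idea ``pick one preimage of each point of $\K$'' fails because the selection need not have compact (or even closed) image, so the real work is to engineer compactness using the dislocation property of $G$.

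The key steps, in order, are as follows. (1) Since $\K$ is compact in the metrizable space $H/G$ (the quotient of a metric space by a group of isometries carries the quotient metric $d_{H/G}(Px,Py)=\inf_{g\in G}\n x-gy\n_H$, which metrizes the quotient topology because $G$ acts by unitaries), it is sequentially compact. (2) Take a countable dense subset $\{\k_m\}$ of $\K$ and pick arbitrary preimages $x_m\in P^{-1}(\k_m)$. (3) For a general $\k\in\K$ choose $\k_{m_j}\to\k$; lifting, we get $x_{m_j}$ and elements $g_j\in G$ with $P(g_j x_{m_j})\to\k$ and in fact, passing to a subsequence, $\n g_j x_{m_j} - y\n_H\to0$ for a suitable $y\in P^{-1}(\k)$ — here is where one invokes the dislocation hypothesis to extract, from the sequence of ``gluing'' group elements, a strongly convergent subsequence, so that the lifted points actually converge in $H$ rather than merely modulo $G$. (4) Define $K$ to be the closure in $H$ of the countable set of all such lifted points obtained along these lifting procedures (equivalently, the closure of an orbit-corrected enumeration); verify $K$ is totally bounded, hence compact, because every sequence in it has a strongly convergent subsequence by the argument in step (3), and that $P(K)=\K$ since $P(K)\subseteq\K$ (each chosen point lies over $\K$, which is closed) and $P(K)\supseteq\overline{\{\k_m\}}=\K$.

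The main obstacle I expect is step (3)–(4): arranging that the \emph{countably many} lifting choices can be made coherently so that the resulting set is precompact in the strong topology of $H$, rather than just having the right image under $P$. Concretely, one must show that if $z_n\in K$ then some subsequence converges strongly; writing $z_n$ over points $\k_n\in\K$ and using compactness of $\K$ to assume $\k_n\to\k$, one has $P(z_n)\to P(z)$ for some $z\in P^{-1}(\k)$, hence $h_n z_n\to z$ strongly for suitable $h_n\in G$; the dislocation property applied to $\{h_n\}$ (which cannot drift off to weak-zero, since $h_n z_n$ converges strongly to the nonzero-in-general $z$ while $z_n$ stays bounded) yields $h_{n_k}\to h$ strongly, so $z_{n_k}=h_{n_k}^{-1}(h_{n_k}z_{n_k})\to h^{-1}z$ strongly, giving the required convergence inside $K$. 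Making this loop airtight — in particular handling the case $z=0$ and ensuring $K$ is genuinely closed under these subsequential limits — is the crux; everything else (metrizability of $H/G$, continuity/openness of $P$, existence of preimages) is routine.
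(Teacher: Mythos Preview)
The paper states Theorem~\ref{63} explicitly \emph{without proof} (see the sentence immediately preceding it), so there is nothing in the paper to compare against; on its own merits, however, your outline has a real gap at the compactness step. Your claim that $\{h_n\}$ ``cannot drift off to weak-zero, since $h_nz_n$ converges strongly to the nonzero-in-general $z$ while $z_n$ stays bounded'' is false: take $H=\ell^2(\Z)$, $G$ the group of bilateral shifts (which is a dislocation group), fix $z\neq0$, and set $h_n=S^n$, $z_n=h_n^{-1}z$; then $h_nz_n\equiv z$ and $\|z_n\|=\|z\|$, yet $h_n\to0$ weakly. Worse, your construction allows exactly this inside $K$: in step~(2) you pick the preimages $x_m$ \emph{arbitrarily}, so if $\K=\{P(z)\}$ is a single point you may take $x_m=S^mz$, and then $\overline{\{x_m\}}$ is not compact. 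The dislocation hypothesis does let you correct representatives by group elements so that a given subsequence converges, but step~(2) freezes the $x_m$'s before any such correction is made, and step~(4) only takes a closure.

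What is missing is coherence in the \emph{selection} of the $x_m$'s, not merely in the subsequent analysis. One repair: choose $x_1$ arbitrarily, and for $m\geq2$ pick $x_m\in P^{-1}(\k_m)$ so that its distance to some already-chosen $x_j$ realises, up to $2^{-m}$, the quotient distance $d_{H/G}(\k_m,\k_j)$; the dislocation property is precisely what guarantees such near-minimisers exist in the orbit, and total boundedness of $\{x_m\}$ then inherits from that of $\K$. Without such a mechanism your argument does not close.
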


We end this subsection with an example of dislocation to be used in Section \ref{s4}. Let us consider the Hilbert space $\hd$. For $\lambda>0$ let $T_\lambda:\hd\to\hd$ be the unitary operator
\[
T_\lambda f=\frac{1}{\lambda^{(d-2)/2}}f\left(\frac{\cdot}{\lambda}\right),\quad f\in\hd
\] and set
$
G=\{T_\lambda:\lambda>0\}.
$
 Then $G$ is a group of unitary operators in $\hd$ and   we have the following:

\begin{lemma}\label{16}
The group $G$ defined above is a dislocation in the Hilbert space $\hd$.
\end{lemma}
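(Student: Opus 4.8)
The plan is to verify the two defining properties of a dislocation group for $G=\{T_\lambda:\lambda>0\}$ acting on $H=\hd$: first that each $T_\lambda$ is unitary and $G$ is a group, and second the crucial alternative — any sequence $\{T_{\lambda_n}\}$ either converges weakly to $0$, or a subsequence converges strongly to a nonzero element of $G$. The group structure is routine: a change of variables shows $\n T_\lambda f\n_{\hd}=\n f\n_{\hd}$ because the exponent $(d-2)/2$ is exactly the $\dot H^1$-critical scaling exponent in $\rd$, and $T_\lambda T_\mu=T_{\lambda\mu}$, $T_\lambda^{-1}=T_{1/\lambda}$, so $G\cong(0,\infty)$ as a group.

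For the dichotomy, I would start from a sequence $\{\lambda_n\}\subset(0,\infty)$ and pass to a subsequence so that $\lambda_n\to\lambda_\infty\in[0,\infty]$ in the extended sense. If $\lambda_\infty\in(0,\infty)$, then one expects $T_{\lambda_n}\to T_{\lambda_\infty}$ in the strong operator topology: for $f$ in the dense class $C_c^\infty(\rd)$ this follows from continuity of the scaling action (dominated convergence applied to $\nabla(T_{\lambda_n}f)=\lambda_n^{-(d-2)/2-1}(\nabla f)(\cdot/\lambda_n)$), and then a standard $\varepsilon/3$-argument using uniform boundedness ($\n T_{\lambda_n}\n=1$) extends strong convergence to all of $\hd$; since $T_{\lambda_\infty}\neq 0$ we are in the second case of the definition. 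It therefore remains to show that if $\lambda_n\to0$ or $\lambda_n\to\infty$, then $T_{\lambda_n}\rightharpoonup 0$ weakly, i.e. $\langle T_{\lambda_n}f,g\rangle_{\hd}\to 0$ for all $f,g\in\hd$; combined with the previous paragraph this yields the contrapositive form required by the definition (if $\{T_{\lambda_n}\}$ does \emph{not} go weakly to $0$, then $\{\lambda_n\}$ cannot have $0$ or $\infty$ as a subsequential limit, so it has a subsequence converging to some $\lambda_\infty\in(0,\infty)$, giving strong convergence to $T_{\lambda_\infty}\neq0$).

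To prove the weak-to-zero claim, I would again reduce to $f,g\in C_c^\infty(\rd)$ by density and uniform boundedness. Writing the $\hd$ inner product as $\langle T_{\lambda_n}f,g\rangle_{\hd}=\int_{\rd}\nabla(T_{\lambda_n}f)\cdot\overline{\nabla g}\,dx$ and changing variables $x=\lambda_n y$, one gets an integral of the form $\lambda_n^{-d/2+1+d-1}\cdot$(something), or more cleanly $\langle T_{\lambda_n}f,g\rangle_{\hd}=\lambda_n^{-d/2}\int \nabla f(y)\cdot\overline{\nabla g(\lambda_n y)}\,\lambda_n^{?}\,dy$ — the exact powers being $\lambda_n^{(d-2)/2}$ from one scaling — and then: when $\lambda_n\to\infty$, the supports of $\nabla(T_{\lambda_n}f)$ (which is $\nabla f$ rescaled by $\lambda_n$) spread out so the overlap with the fixed compact support of $\nabla g$ forces the integral to $0$ after bounding by $\n\nabla(T_{\lambda_n}f)\n_{L^\infty}\cdot\n\nabla g\n_{L^1}\lesssim\lambda_n^{-d/2}\to 0$; when $\lambda_n\to0$, by unitarity rewrite $\langle T_{\lambda_n}f,g\rangle=\langle f,T_{1/\lambda_n}g\rangle$ and apply the same estimate with $1/\lambda_n\to\infty$. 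The main obstacle — really the only non-mechanical point — is organizing the $\varepsilon/3$ density argument carefully so that the uniform operator bound $\n T_\lambda\n_{\hd\to\hd}=1$ does all the work of transferring pointwise-on-a-dense-set statements to the whole space, both for the strong-convergence case and the weak-convergence case; once that bookkeeping is set up, each individual limit is an elementary scaling computation. I would also remark that this $G$ is precisely the dilation symmetry group of the energy-critical problem, which is why it reappears in Section \ref{s4}.
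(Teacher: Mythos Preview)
Your argument is correct and follows the standard route for verifying that scaling groups are dislocations. Note, however, that the paper does \emph{not} actually supply a proof of this lemma: it is stated in Section~2.4 and used later in the proof of Proposition~\ref{ec}, but no argument is given. So there is no ``paper's own proof'' to compare against; your proposal fills a genuine gap in the exposition.

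Two small points worth tightening. First, your contrapositive is phrased slightly loosely: from ``$\{T_{\lambda_n}\}$ does not converge weakly to $0$'' you only get the existence of $f,g\in\hd$, $\varepsilon>0$, and a \emph{subsequence} $\{n_k\}$ along which $|\langle T_{\lambda_{n_k}}f,g\rangle_{\hd}|\geq\varepsilon$; it is along \emph{that} subsequence that you then extract a further subsequence with $\lambda_{n_{k_j}}\to\lambda_\infty\in[0,\infty]$ and rule out the endpoints. The full sequence $\{\lambda_n\}$ may well have other subsequences tending to $0$ or $\infty$. Second, the exponent bookkeeping in the weak-to-zero step is hesitant; the clean identity is simply
\[
\nabla\bigl(T_{\lambda}f\bigr)(x)=\lambda^{-d/2}(\nabla f)(x/\lambda),
\]
whence $\|\nabla(T_{\lambda}f)\|_{L^\infty}=\lambda^{-d/2}\|\nabla f\|_{L^\infty}$, and for $f,g\in C_c^\infty(\rd)$ one bounds $|\langle T_{\lambda_n}f,g\rangle_{\hd}|\leq \lambda_n^{-d/2}\|\nabla f\|_{L^\infty}\|\nabla g\|_{L^1}\to 0$ as $\lambda_n\to\infty$, exactly as you state at the end. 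With these cosmetic fixes the proof is complete.
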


\section{Proof of the Theorems}\label{s3}

This section is divided into three subsections. In Subsection \ref{s31} we prove the inhomogeneous estimates for solutions to \eqref{cp} with $V=0$ whereas in Subsection \ref{s32} we establish the same estimates to a solution to \eqref{cp} with potentials in weak Lebesgue space $L^{d/2,\infty}(\rd)$. 
In Subsection \ref{s33} we proved the estimates for solutions to \eqref{cp} with inverse-square potentials. 

\subsection{Improvement in unperturbed case}\label{s31}
The idea here is to use the $TT^*$ method as in \cite{keel1998endpoint} where the end-point Strichartz estimates are proved. Using the duality it essentially boils down to show that the operator $T:L^{\tilde{q}'}(\R,L^{\tilde{r}',2}(\rd))\times L^{q'}(\R,L^{r',2}(\rd))\to\C$ defined in \eqref{bil} below, is bounded. This, in turn, further reduces to the show the operator \begin{equation*}
\mathcal{T}:=\{T_j\}_{j\in\mathbb{Z}}:L^{\tilde{q}'}(\R,L^{\tilde{r}',2}(\rd))\times L^{q'}(\R,L^{r',2}(\rd))\to l_1^0,
\end{equation*}
 where $T_j$ as in \eqref{lop} below, is bounded. 
 Now we choose $r_0,r_1$ near $r$ and $\tilde{r}_0,\tilde{r}_1$ near $\tilde{r}$ in a judicial way so that applying Lemma \ref{11} below with $(q,\tilde{q},r,\tilde{r})=(q,\tilde{q},r_0,\tilde{r}_0),(q,\tilde{q},r_1,\tilde{r}_0),(q,\tilde{q},r_0,\tilde{r}_1)$ we obtain three different bounds for $\mathcal{T}$ (with appropriate domains and ranges). Then the result follows from interpolation results. Note that in \cite{vilela2007inhomogeneous}, the author varied the exponents $q,\tilde{q}$ and established the weaker estimate \eqref{str} whereas here we vary the exponents $r,\tilde{r}$ and obtain the stronger estimate \eqref{37}.

\begin{proof}[{\bf Proof of Theorem \ref{3'}}]
Note that 
$u(t)=\int_0^te^{i(t-\tau)\Delta}F(\tau,\cdot)d\tau.$
Using $TT^*$ method we need to prove
\begin{equation*}
|T(F,G)|\lesssim\n F\n_{L^{\tilde{q}'}L^{\tilde{r}',2}}\n G\n_{L^{q'}L^{r',2}}
\end{equation*}where $T$ is given by\begin{equation}\label{bil}
T(F,G)=\int_\R\int_{-\infty}^t\langle e^{-i\tau\Delta}F(\tau,\cdot),e^{-it\Delta}G(t,\cdot)\rangle d\tau dt.
\end{equation}
Decomposing $T$ by $T=\sum T_j$ where\begin{equation}\label{lop}
T_j(F,G)=\int_\R\int_{t-2^{j+1}}^{t-2^j}\langle e^{-i\tau\Delta}F(\tau,\cdot),e^{-it\Delta}G(t,\cdot)\rangle d\tau dt,
\end{equation}
it is enough to prove
\begin{equation}\label{10}
\sum_{j\in\mathbb{Z}}|T_j(F,G)|\lesssim\n F\n_{L^{\tilde{q}'}L^{\tilde{r}',2}}\n G\n_{L^{q'}L^{r',2}}.
\end{equation}
Set $\mathcal{T}_{F,G}=\{T_j(F,G)\}$, then \eqref{10} is equivalent with
\begin{equation}\label{20}
\n\mathcal{T}_{F,G}\n_{l_1^0}\lesssim\n F\n_{L^{\tilde{q}'}L^{\tilde{r}',2}}\n G\n_{L^{q'}L^{r',2}}.
\end{equation}

Now we quote a result due to Vilela, see \cite[Lemma 2.2]{vilela2007inhomogeneous}. Using this Lemma for three different choices of $(r,\tilde{r})$ we would get three estimates. These estimates together with Lemmata \ref{18} and \ref{19} would finally imply \eqref{20}.

\begin{lemma}\label{11}
Let $d\geq3$ and $r,\tilde{r}$ be such that $2\leq r,\tilde{r}\leq\infty$ and \begin{equation}
\frac{d-2}{d}\leq\frac{r}{\tilde{r}}\leq\frac{d}{d-2}.
\end{equation}
Then for all $q,\tilde{q}$ satisfying \begin{equation}
\begin{cases}\frac{1}{q}+\frac{1}{\tilde{q}}\leq1\\
\frac{d}{2}\left(\frac{1}{r}-\frac{1}{\tilde{r}}\right)<\frac{1}{\tilde{q}}\quad\text{if }r\leq\tilde{r}\\
\frac{d}{2}\left(\frac{1}{\tilde{r}}-\frac{1}{r}\right)<\frac{1}{q}\quad\text{if }r\geq\tilde{r}
\end{cases}
\end{equation}
the following estimates holds for all $j\in\mathbb{Z}$
\begin{equation}
|T_j(F,G)|\leq c2^{-j\beta(\tilde{q},q,\tilde{r},r)}\n F\n_{L^{\tilde{q}'}L^{\tilde{r}',2}}\n G\n_{L^{q'}L^{r',2}}
\end{equation}
where $\beta(\tilde{q},q,\tilde{r},r)=\left(\frac{1}{\tilde{q}'}-\frac{1}{q}\right)+\frac{d}{2}\left(\frac{1}{\tilde{r}'}-\frac{1}{r}\right)-1.$
\end{lemma}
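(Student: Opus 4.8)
\textbf{Proof plan for Lemma \ref{11}.}
The plan is to reduce the bilinear estimate on $T_j$ to a single dispersive–plus–interpolation argument. First I would peel off the time variables: on the dyadic region $\tau\in(t-2^{j+1},t-2^j)$ one has $|t-\tau|\sim 2^j$, so the Schr\"odinger propagator $e^{i(t-\tau)\Delta}$ acting between the two inner factors behaves like an operator of ``size'' $2^j$. Concretely, writing $\langle e^{-i\tau\Delta}F(\tau),e^{-it\Delta}G(t)\rangle=\langle e^{i(t-\tau)\Delta}F(\tau),G(t)\rangle$, I would use the dispersive estimate \eqref{dp} in its Lorentz-refined form, namely $\n e^{is\Delta}h\n_{L^{\rho,2}}\lesssim |s|^{-d(1/2-1/\rho)}\n h\n_{L^{\rho',2}}$ for $2\le\rho\le\infty$ (this is the standard interpolation between the $L^2$ isometry and the $L^\infty$–$L^1$ bound, upgraded to Lorentz scales), to bound the spatial pairing at each fixed $(t,\tau)$ by $2^{-jd(1/2-1/\rho)}\n F(\tau)\n_{L^{\rho',2}}\n G(t)\n_{L^{\rho',2}}$. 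One must be slightly careful and allow two different spatial exponents for $F$ and $G$ — say $\tilde r$ and $r$ — which is exactly why the hypothesis restricts $r/\tilde r$ to the interval $[(d-2)/d,d/(d-2)]$: this is the range in which $L^{\tilde r',2}\to L^{r,2}$ dispersive bounds with the correct scaling exponent are available after a Sobolev-type embedding absorbs the mismatch.

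Next I would handle the two remaining time integrations. After the spatial estimate one is left with a double integral in $(t,\tau)$ over the strip $|t-\tau|\sim 2^j$ of the product $\n F(\tau)\n_{L^{\tilde r',2}}\n G(t)\n_{L^{r',2}}$ against the weight $2^{-jd(1/2-1/\rho)}$ (suitably reinterpreted for the two-exponent case). Applying Young's convolution inequality in the $t$-variable with the characteristic function of an interval of length $\sim 2^j$, together with H\"older in the dyadic time blocks, converts $\n F(\tau)\n_{L^{\tilde r',2}}$ and $\n G(t)\n_{L^{r',2}}$ into the $L^{\tilde q'}_\tau$ and $L^{q'}_t$ norms respectively, at the cost of an additional power $2^{j(1-1/q-1/\tilde q')}$ (this is where the condition $1/q+1/\tilde q\le1$ enters, ensuring the convolution kernel is integrable with the right sign of the exponent). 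Collecting the powers of $2^j$ gives exactly $2^{-j\beta(\tilde q,q,\tilde r,r)}$ with $\beta=(1/\tilde q'-1/q)+\frac d2(1/\tilde r'-1/r)-1$, as claimed. The role of the one-sided conditions $\frac d2(1/r-1/\tilde r)<1/\tilde q$ (when $r\le\tilde r$) and its mirror is to guarantee that the Sobolev embedding used to reconcile the two spatial exponents is admissible, i.e. that the relevant differentiability index is genuinely positive and below the critical threshold.

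I expect the main obstacle to be the careful bookkeeping in the unbalanced case $r\neq\tilde r$: one cannot apply a single $L^{\rho'}\to L^\rho$ dispersive bound, and instead must combine the dispersive estimate with a fractional integration / Sobolev embedding on $\rd$ while keeping track of Lorentz second indices throughout (using the Lorentz H\"older and the Lorentz-space mapping properties of Riesz potentials). Keeping the scaling exponents consistent — so that the final power of $2^j$ is precisely $\beta$ and not merely comparable — is the delicate point, and it is precisely here that the hypotheses $(d-2)/d\le r/\tilde r\le d/(d-2)$ and the case-split conditions are used in full strength. Once the pointwise-in-$j$ bound is in hand, no further work is needed for Lemma \ref{11} itself; the summation over $j$ and the resulting estimate \eqref{20} are carried out afterwards via Lemmata \ref{18} and \ref{19}, as indicated.
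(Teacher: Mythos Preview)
The paper does not give its own proof of this lemma; it is quoted from Vilela \cite[Lemma~2.2]{vilela2007inhomogeneous}, so the comparison is with Vilela's argument.

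Your plan is sound in the diagonal case $r=\tilde r$: the Lorentz--refined dispersive bound $\|e^{is\Delta}h\|_{L^{\rho,2}}\lesssim |s|^{-d(1/2-1/\rho)}\|h\|_{L^{\rho',2}}$ followed by Young's inequality in the time variable produces exactly the power $2^{-j\beta}$, and $1/q+1/\tilde q\le 1$ is precisely the condition needed for the convolution kernel $\chi_{\{|\cdot|\sim 2^j\}}$ to lie in the correct $L^p(\R)$.

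The genuine gap is the off-diagonal case $r\neq\tilde r$. There is no fixed-time estimate
\[
\|e^{is\Delta}h\|_{L^{r,2}}\ \lesssim\ |s|^{-\frac d2\left(1-\frac1r-\frac1{\tilde r}\right)}\|h\|_{L^{\tilde r',2}},\qquad r\neq\tilde r,
\]
and no combination of the dispersive bound with a spatial Sobolev embedding or Riesz potential can manufacture one: Fourier multipliers such as $|D|^\gamma$ commute with $e^{is\Delta}$ and carry no power of $|s|$, so after unwinding them you are forced back to a \emph{diagonal} dispersive bound, now with either the wrong decay rate or an uncontrolled norm (e.g.\ $\||D|^\gamma h\|_{L^{\sigma'}}$ with $\gamma>0$, which is not dominated by $\|h\|_{L^{\tilde r'}}$). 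A direct test with $h=\chi_{B_1}$ and $s\to\infty$ already shows the displayed inequality fails whenever $r\neq\tilde r$, even under the ratio hypothesis $(d-2)/d\le r/\tilde r\le d/(d-2)$. Consequently the spatial pairing $\langle e^{i(t-\tau)\Delta}F(\tau),G(t)\rangle$ cannot be bounded pointwise in $(t,\tau)$ with the asymmetric exponents $(\tilde r',r')$, and the remainder of your outline never gets started.

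In Vilela's argument (which in turn follows the Keel--Tao endpoint machinery) the asymmetry in $(r,\tilde r)$ is produced at the level of the \emph{bilinear form} $T_j$ itself, not at the level of the propagator at a single time: one uses the dyadic time localisation $|t-\tau|\sim 2^j$ together with both the $L^2$ conservation and the $L^1\!\to\!L^\infty$ dispersive estimate, and interpolates the resulting family of bounds on $T_j$. The constraint $(d-2)/d\le r/\tilde r\le d/(d-2)$ is exactly what delimits the accessible interpolation region for $(1/r,1/\tilde r)$, while the one-sided conditions $\tfrac d2\bigl|\tfrac1r-\tfrac1{\tilde r}\bigr|<\tfrac1{\tilde q}$ (resp.\ $\tfrac1q$) arise from the H\"older/Young step in the time variable. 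Neither condition encodes a spatial Sobolev embedding.
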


Let us fix $q,\tilde{q}$ as in \eqref{2} (
this implies ${1}/{q}+{1}/{\tilde{q}}\leq1$ due to \eqref{sc}) and assume we can choose $r_0,\tilde{r}_0,r_1,\tilde{r}_1\geq2$ 
satisfying 
\begin{equation}\label{15}
\beta(\tilde{q},q,\tilde{r}_0,r_1)=\beta(\tilde{q},q,\tilde{r}_1,r_0)\Longleftrightarrow\frac{1}{r_1}-\frac{1}{r_0}=\frac{1}{\tilde{r}_1}-\frac{1}{\tilde{r}_0}
\end{equation} 
\begin{align}\label{23}
\frac{d-2}{d}<\frac{r_j}{\tilde{r_k}}<\frac{d}{d-2},\quad
\left\{
\begin{array}{ll} 
      \frac{d}{2}\left(\frac{1}{r_j}-\frac{1}{\tilde{r}_k}\right)<\frac{1}{\tilde{q}}\quad\text{if }r_j\leq\tilde{r}_k\\
\frac{d}{2}\left(\frac{1}{\tilde{r}_k}-\frac{1}{r_j}\right)<\frac{1}{q}\quad\text{if }r_j\geq\tilde{r}_k
\end{array} ,
\right.
\end{align}for $(j,k)=(0,0),(1,0),(0,1)$,
such that applying Lemma \ref{11},  we achieve
\begin{equation}\label{21}\mathcal{T}:\begin{cases}
L^{\tilde{q}'}L^{\tilde{r}_0'}\times L^{q'}L^{r_0'}\longrightarrow l_\infty^{\beta(\tilde{q},q,\tilde{r}_0,r_0)}=l_\infty^{\beta_0},\\
L^{\tilde{q}'}L^{\tilde{r}_0'}\times L^{q'}L^{r_1'}\longrightarrow l_\infty^{\beta(\tilde{q},q,\tilde{r}_0,r_1)}=l_\infty^{\beta_1},\\
L^{\tilde{q}'}L^{\tilde{r}_1'}\times L^{q'}L^{r_0'}\longrightarrow l_\infty^{\beta(\tilde{q},q,\tilde{r}_1,r_0)}=l_\infty^{\beta_1}.
\end{cases}
\end{equation}
Let us impose the conditions
\begin{align}
(1-\theta)\beta_0+\theta\beta_1=0&\quad\text{for some}\quad0<\theta<1,\label{12}\\
\frac{1}{\tilde{r}}=\frac{1-\theta_0}{\tilde{r}_0}+\frac{\theta_0}{\tilde{r}_1}&\quad\text{for some}\quad0<\theta_0<1,\label{13}\\
\frac{1}{r}=\frac{1-\theta_1}{r_0}+\frac{\theta_1}{r_1}&\quad\text{for some}\quad0<\theta_1<1\label{14},\\
\theta_0+\theta_1=\theta&\label{22}
\end{align}to apply Lemma \ref{18} and Lemma \ref{19}.
Applying Lemma \ref{18} we get
\begin{equation}
\mathcal{T}:(L^{\tilde{q}'}L^{\tilde{r}_0'},L^{\tilde{q}'}L^{\tilde{r}_1'})_{\theta_0,2}\times (L^{q'}L^{r_0'},L^{q'}L^{r_1'})_{\theta_1,2}\longrightarrow (l_\infty^{\beta_0},l_\infty^{\beta_1})_{\theta,1}
\end{equation}which implies $
\mathcal{T}:L^{\tilde{q}'}L^{\tilde{r}',2}\times L^{q'}L^{r',2}\longrightarrow l_1^0
$  (using Lemma \ref{19}, \textcolor{black}{ Minkowski inequality, $q,\tilde{q}\geq2$}) i.e.  \eqref{20}.

Now it is enough to find $r_0,\tilde{r}_0,r_1,\tilde{r}_1>2,\theta_0,\theta_1,\theta$ satisfying  \eqref{15}, \eqref{23}, \eqref{12}, \eqref{13}, \eqref{14} and \eqref{22}.
Since the maps $(x,y)\mapsto\frac{x}{y},(x,y)\mapsto\frac{d}{2}\left(\frac{1}{x}-\frac{1}{y}\right)$ are continuous on $(0,\infty)\times(0,\infty)$, because of \eqref{2}, there exists $\delta>0$ such that
\begin{align*}
\frac{d-2}{d}<\frac{{1}/{r}+a}{{1}/{\tilde{r}}+b}<\frac{d}{d-2},
\end{align*}

\begin{equation*}
\begin{cases}
\frac{d}{2}\left(\frac{1}{r}+a-\frac{1}{\tilde{r}}-b\right)<\frac{1}{\tilde{q}}\quad\text{if }r\leq\tilde{r}\\
\frac{d}{2}\left(\frac{1}{\tilde{r}}+a-\frac{1}{r}-b\right)<\frac{1}{q}\quad\text{if }r\geq\tilde{r}
\end{cases}
\end{equation*}
and $1/r+a,1/\tilde{r}+b>2$ for all $|a|,|b|\leq\delta$.
Set \begin{equation*}
\frac{1}{r_0}=\frac{1}{r}-a,\quad\frac{1}{r_1}=\frac{1}{r}+b,\quad\frac{1}{\tilde{r}_0}=\frac{1}{\tilde{r}}-a,\quad\frac{1}{\tilde{r}_1}=\frac{1}{\tilde{r}}+b
\end{equation*}with \begin{equation*}0<a,b<
\begin{cases}
\min\{\delta,\frac{1}{r},\frac{1}{\tilde{r}},\frac{1}{2}-\frac{1}{r},\frac{1}{2}-\frac{1}{\tilde{r}},\frac{1}{2}\left|\frac{1}{r}-\frac{1}{\tilde{r}}\right|\}\quad\text{if }r\neq\tilde{r}\\
\min\{\delta,\frac{1}{r},\frac{1}{\tilde{r}},\frac{1}{2}-\frac{1}{r},\frac{1}{2}-\frac{1}{\tilde{r}},\frac{1}{2}\}\quad\quad\quad\quad\ \text{if }r=\tilde{r}.
\end{cases}
\end{equation*}
Then \eqref{15} and \eqref{23} are satisfied. Because of \eqref{13} and \eqref{14} we have $a(1-\theta_0)=b\theta_0$ and $a(1-\theta_1)=b\theta_1$. Adding them we have $a(2-\theta)=b\theta$ using \eqref{22}. Therefore we have \begin{equation}\label{27}
\theta=\frac{2a}{a+b}.
\end{equation}
Subtracting $a(1-\theta_0)=b\theta_0$ from $a(1-\theta_1)=b\theta_1$ we get $\theta_0=\theta_1$ and therefore $\theta_0=\theta_1=\frac{a}{a+b}$. Since we require $0<\theta<1$ we choose $a<b$.

Note that \eqref{12} is equivalent to
\begin{align}\label{17}
\left(\frac{1}{\tilde{q}'}-\frac{1}{q}\right)+\frac{d}{2}\left(\frac{1}{\tilde{r}_0'}-\frac{1}{r_0}\right)+\theta\frac{d}{2}\left[\left(\frac{1}{\tilde{r}_0'}-\frac{1}{r_1}\right)-\left(\frac{1}{\tilde{r}_0'}-\frac{1}{r_0}\right)\right]=1
\end{align}
Now using \eqref{2} we have 
$
\frac{1}{\tilde{q}'}-\frac{1}{q}=1-\frac{1}{\tilde{q}}-\frac{1}{q}=1-\frac{d}{2}\left(1-\frac{1}{r}-\frac{1}{\tilde{r}}\right)
$
and therefore \eqref{17} is equivalent with
\begin{align*}
&\frac{d}{2}\left(\frac{1}{\tilde{r}_0'}-\frac{1}{r_0}\right)+\theta\frac{d}{2}\left(\frac{1}{r_0}-\frac{1}{r_1}\right)=\frac{d}{2}\left(1-\frac{1}{r}-\frac{1}{\tilde{r}}\right)\\
\Longleftrightarrow&\frac{1}{r_0}+\frac{1}{\tilde{r}_0}=\frac{1}{r}+\frac{1}{\tilde{r}}+\theta\left(\frac{1}{r_0}-\frac{1}{r_1}\right)
\end{align*}
and by our choice of $r_0,\tilde{r}_0,r_1,\tilde{r}_1,\theta_0,\theta_1,\theta$ this is equivalent to $2a=\theta(a+b)$ which is equivalent to \eqref{27}.  
\end{proof}

\subsection{Potential in $L^{d/2,\infty}(\rd)$}\label{s32}
\begin{proof}[{\bf Proof of Theorem \ref{1}}]
 Let us split $u$ as $u=u_1+u_2$ where $u_1,u_2$ satisfy\\ \begin{minipage}{.5\linewidth}
\[
\begin{cases} i\partial_t u_1+\Delta u_1=F\\
u_1(0,\cdot)=0
\end{cases},
\]
\end{minipage}
\begin{minipage}{.3\linewidth}
\[
\begin{cases} i\partial_t u_2+\Delta u_2=-Vu\\
u_2(0,\cdot)=0.
\end{cases}
\]
\end{minipage}\\
Let $r,\tilde{r},q,\tilde{q}$ satisfy \eqref{2}.
Using Theorem \ref{3'} for exponents $(q,r),(\tilde{q},\tilde{r})$ we have that
\begin{equation*}\label{6}
\n u_1\n_{L^qL^{r,2}}\leq c_s\n F\n_{L^{\tilde{q}'}L^{\tilde{r}',2}}
\end{equation*}
and for exponent $(q,r),(q',(\frac{dr}{2r+d})')$ we have 
\begin{equation*}\label{7}
\n u_2\n_{L^qL^{r,2}}\leq c_s\n Vu\n_{L^qL^{\frac{dr}{2r+d},2}}
\end{equation*}provided we farther assume
\begin{equation*}\begin{cases}
d\left(\frac{1}{r}-\frac{1}{2^*}\right)<\frac{1}{q'}\quad \text{for }\frac{2(d-1)}{d-2}<r\leq2^*,\\
d\left(\frac{1}{2^*}-\frac{1}{r}\right)<\frac{1}{q}\quad \text{for }2^*\leq r<\frac{2^*(d-1)}{d-2}.
\end{cases}
\end{equation*}
Now using H\"{o}lder inequality for Lorentz spaces (see Lemma \ref{lh}) we have
\begin{equation*}
\n Vu\n_{L^qL^{\frac{dr}{2r+d},2}}\leq (\frac{dr}{2r+d})'\n V\n_{L^{\infty}L^{d/2,\infty}}\n u\n_{L^qL^{r,2}}
\end{equation*}
and therefore 
\begin{align*}
\n u\n_{L^qL^{r,2}}&\leq\n u_1\n_{L^qL^{r,2}}+\n u_2\n_{L^qL^{r,2}}\leq c_s\left(\n F\n_{L^{\tilde{q}'}L^{\tilde{r}',2}}+\n Vu\n_{L^{\tilde{q}'}L^{\tilde{r}',2}}\right)\\
&\leq c_s\left(\n F\n_{L^{\tilde{q}'}L^{\tilde{r}',2}}+(\frac{dr}{2r+d})'\n V\n_{L^{\infty}L^{d/2,\infty}}\n u\n_{L^qL^{r,2}}\right).
\end{align*}
Then we have that 
\begin{equation*}\label{9}
\n u\n_{L^qL^{r,2}}\leq\frac{c_s}{1-c_s(\frac{dr}{2r+d})'\n V\n_{L^\infty L^{d/2,\infty}}}\n F\n_{L^{\tilde{q}'}L^{\tilde{r}',2}}
\end{equation*}provided  $c_s(\frac{dr}{2r+d})'\n V\n_{L^\infty L^{d/2,\infty}}<1$.
\end{proof}

\begin{proof}[{\bf Proof of Theorem \ref{1'}}]

{\it Case I:} Here we prove the result assuming the \textcolor{black}{first set of conditions}. Multiplying the equation \eqref{cp} by $\bar{u}$ and integrating by parts we get 
\[
i\int_{\rd}\partial_tu\bar{u}-\int_{\rd}|\nabla u|^2+\int_{\rd}V|u|^2=\int_{\rd}F\bar{u}.
\] 
Taking imaginary part of both side we get $
{\rm Re} \left(\int_{\rd}\partial_tu\bar{u}\right)={\rm Im}\left(\int_{\rd}F\bar{u}\right)
$. Cauchy-Schwartz inequality now implies $\partial_t\n u(t)\n^2\leq2\n u(t)\n\n F(t)\n$ which in turn gives (after cancelling one $\n u(t)\n$ from both side and then integrating in time on $[0,t]$)
\begin{equation}\label{29'}
\n u\n_{L^\infty L^{2,2}}\lesssim\n F\n_{L^1L^{2,2}}
\end{equation}(see proof of Proposition 3 in \cite{pierfelice2006strichartz} for details).
Now we would like to have the estimate
\textcolor{black}{
\begin{equation}\label{46}
\n u\n_{L^{2}L^{r_1,2}}\lesssim\n F\n_{L^{\tilde{q}_1'}L^{\tilde{r}_1',2}}
\end{equation}}
using Theorem \ref{1} for appropriate $q_1,\tilde{q}_1,\tilde{r}_1$. \\
\textcolor{black}{
Choose $0\leq\theta\leq1$ so that $\frac{1}{q}=({1-\theta})0+\frac{\theta}{2}\Longleftrightarrow\theta=\frac{2}{q}$, then take $\tilde{q}_1\geq2$, $2_*<r_1<2_*^*$, $\tilde{r}_1>2$,  so that $\tilde{q}_1=\theta\tilde{q}_1$, $\frac{1}{{r}}=\frac{1-\theta}{2}+\frac{\theta}{{r}_1}$ and $\frac{1}{\tilde{r}}=\frac{1-\theta}{2}+\frac{\theta}{\tilde{r}_1}$. 
Let us now verify the conditions in Theorem \ref{1} so that \eqref{46} holds. Note that by direct computation we have 
\begin{enumerate}
\item[•] $\frac{1}{2}+\frac{1}{\tilde{q}_1}+\frac{d}{2}\left(\frac{1}{r_1}+\frac{1}{\tilde{r}_1}\right)=\frac{d}{2}\Longleftrightarrow\frac{1}{q}+\frac{1}{\tilde{q}}+\frac{d}{2}\left(\frac{1}{r}+\frac{1}{\tilde{r}}\right)=\frac{d}{2}\Longleftrightarrow\eqref{sc}$
\item[•] $\frac{d}{d-2}<\frac{\tilde{r}_1}{r_1}<\frac{d}{d-2}\Longleftrightarrow \frac{1}{2}-\frac{1}{\tilde{r}}-\frac{1}{q}<\frac{d}{2}\left(\frac{1}{r}-\frac{1}{\tilde{r}}\right)<\frac{1}{r}-\frac{1}{2}+\frac{1}{q}$
\item[•] $\tilde{q}_1\geq2\Longleftrightarrow \tilde{q}\geq q$
\item[•] $\frac{d}{2}\left(\frac{1}{{r}_1}-\frac{1}{\tilde{r}_1}\right)<\frac{1}{\tilde{q}_1}\text{ if }r_1\leq\tilde{r}_1\Longleftrightarrow\frac{d}{2}\left(\frac{1}{{r}}-\frac{1}{\tilde{r}}\right)<\frac{1}{\tilde{q}} \text{ if }r\leq\tilde{r}$, $\frac{d}{2}\left(\frac{1}{\tilde{r}_1}-\frac{1}{r_1}\right)<\frac{1}{q_1}\text{ if }r_1\geq\tilde{r}_1\Longleftrightarrow\frac{d}{2}\left(\frac{1}{\tilde{r}}-\frac{1}{r}\right)<\frac{1}{q}\text{ if }r\geq\tilde{r}\Longleftrightarrow\eqref{2}$
\item[•] $2_*<r_1<2_*^*\Longleftrightarrow\frac{2q(d-1)}{q(d-1)-2}<r<\frac{2qd(d-1)}{qd(d-1)-6d+8}$ and $\tilde{r}_1>2\Longleftrightarrow\tilde{r}>2$
\item[•] $d\left|\frac{1}{r_1}-\frac{1}{2^*}\right|<\frac{1}{2}\Longleftrightarrow\left|1-\frac{q}{2d}\big(\frac{1}{2}-\frac{1}{r}\big)\right|<\frac{1}{2}.$
\end{enumerate}}
Now for $c_s2^*\n V\n_{L^\infty L^{d/2,\infty}}<1$, the above conditions ensures \eqref{46}. 
 Interpolating (see Lemma \ref{int}) \eqref{29'} and \eqref{46}, we get the result.\\

%
%
{\it Case II:} Let us assume the \textcolor{black}{second set of conditions}. As $(\infty,2),(2,2^*)$ are admissible pairs, by \cite[Theorems 1, 3]{pierfelice2006strichartz}
, we have
\begin{align}\label{44'}
\n u\n_{L^{\infty}L^{2,2}}\lesssim\n F\n_{L^2L^{{2^*}',2}}
\end{align} for $c_s2^*\n V\n_{L^{d/2,\infty}}<1$.
Here again we would like to have the estimate of the form 
\textcolor{black}{
\begin{equation}\label{46'}
\n u\n_{L^2L^{r_1,2}}\lesssim\n F\n_{L^{\tilde{q}_1'}L^{\tilde{r}_1',2}}
\end{equation}}
using Theorem \ref{1} for appropriate $q_1,\tilde{q}_1,\tilde{r}_1$. \\
\textcolor{black}{
Choose $0\leq\theta\leq1$ so that $\frac{1}{q}=({1-\theta})0+\frac{\theta}{2}\Longleftrightarrow\theta=\frac{2}{q}$, then take $\tilde{q}_1\geq2$, $2_*<r_1<2_*^*$, $\tilde{r}_1>2$,  so that $\frac{1}{\tilde{q}}=\frac{1-\theta}{2}+\frac{\theta}{\tilde{q}_1}$, $\frac{1}{{r}}=\frac{1-\theta}{2}+\frac{\theta}{{r}_1}$ and $\frac{1}{\tilde{r}}=\frac{1-\theta}{2^*}+\frac{\theta}{\tilde{r}_1}$. 
Then again by direct computation we have
\begin{enumerate}
\item[•] $\frac{1}{2}+\frac{1}{\tilde{q}_1}+\frac{d}{2}\left(\frac{1}{r_1}+\frac{1}{\tilde{r}_1}\right)=\frac{d}{2}\Longleftrightarrow\frac{1}{q}+\frac{1}{\tilde{q}}+\frac{d}{2}\left(\frac{1}{r}+\frac{1}{\tilde{r}}\right)=\frac{d}{2}\Longleftrightarrow\eqref{sc}$
\item[•] $\frac{d}{d-2}<\frac{\tilde{r}_1}{r_1}<\frac{d}{d-2}\Longleftrightarrow \frac{d-1}{d}\left(1-\frac{2}{q}\right)-\frac{1}{\tilde{r}}<\frac{d}{2}\left(\frac{1}{{r}}-\frac{1}{\tilde{r}}\right)<\frac{1}{r}$
\item[•] $\tilde{q}_1\geq2\Longleftrightarrow\tilde{q}\geq2$
\item[•]$\frac{d}{2}\left(\frac{1}{{r}_1}-\frac{1}{\tilde{r}_1}\right)<\frac{1}{\tilde{q}_1}\text{ if }r_1\leq\tilde{r}_1\Longleftrightarrow\frac{d}{2}\left(\frac{1}{{r}}-\frac{1}{\tilde{r}}\right)<\frac{1}{\tilde{q}} \text{ if }\frac{1}{r}-\frac{1}{\tilde{r}}\geq\frac{1}{d}\left(1-\frac{2}{q}\right)$, $\frac{d}{2}\left(\frac{1}{\tilde{r}_1}-\frac{1}{r_1}\right)<\frac{1}{q_1}\text{ if }r_1\geq\tilde{r}_1\Longleftrightarrow\frac{d}{2}\left(\frac{1}{\tilde{r}}-\frac{1}{r}\right)<\frac{1}{q}\text{ if }\frac{1}{r}-\frac{1}{\tilde{r}}\leq\frac{1}{d}\left(1-\frac{2}{q}\right)$
\item[•] $2_*<r_1<2_*^*\Longleftrightarrow\frac{2q(d-1)}{q(d-1)-2}<r<\frac{2qd(d-1)}{qd(d-1)-6d+8}$ and $\tilde{r}_1>2\Longleftrightarrow\tilde{r}>2$
\item[•] $d\left|\frac{1}{{r}_1}-\frac{1}{2^*}\right|<\frac{1}{2}\Longleftrightarrow\left|1-\frac{q}{2d}\big(\frac{1}{2}-\frac{1}{r}\big)\right|<\frac{1}{2}.$
\end{enumerate}}

The above set of assumption together with $c_s2^*\n V\n_{L^\infty L^{d/2,\infty}}<1$ imply \eqref{46'}.
Now we interpolate \eqref{44'} and \eqref{46'} to get the result.
\end{proof}

\subsection{Inverse square potential}\label{s33}

\begin{proof}[{\bf Proof of Theorem \ref{56}}]
Note we have to prove the case (iii) only. We prove the result using interpolation twice in two steps.\\

{\it Step I:} Set $\frac{1}{q_0}=\frac{1}{\tilde{q}_0}=\frac{1}{2},\frac{1}{r_0}=\frac{d-2\sigma}{2d}, \frac{1}{\tilde{r}_0}=\frac{d-2(2-\sigma)}{2d}$, $\frac{1}{\tilde{q}_1}=\frac{1}{2}$ and $\frac{1}{\tilde{r}_1}=\frac{d-2}{2d}$. 
From Theorem \ref{56} (ii) 
we have \begin{equation}\label{58}
\left\n\int_0^te^{i(t-\tau)\la}F(\tau)d\tau\right\n_{L^2L^{\frac{2d}{d-2\sigma}}}\lesssim\n F\n_{L^2L^{(\frac{2d}{d-2(2-\sigma)})'}}
\end{equation} for appropriate $\sigma$.
Let us choose $\theta$ so that 
\begin{align}\label{57}
\frac{1}{\tilde{r}}=\frac{1-\theta}{\tilde{r}_0}+\frac{\theta}{\tilde{r}_1}.
\end{align} 
Note that
$
\frac{1}{\tilde{r}_0}-\frac{1}{\tilde{r}}=\frac{d-2(2-\sigma)}{2d}-\frac{d-2(2-s)}{2d}=\frac{\sigma-s}{d}
$ and
$
\frac{1}{\tilde{r}_0}-\frac{1}{\tilde{r}_1}=\frac{d-2(2-\sigma)}{2d}-\frac{d-2}{2d}=\frac{2-2(2-\sigma)}{2d}=\frac{1-(2-\sigma)}{d}=\frac{\sigma-1}{d}.
$
These together with \eqref{57} imply $\theta=\big(\frac{1}{\tilde{r}}-\frac{1}{\tilde{r}_0}\big)/\big(\frac{1}{\tilde{r}_1}-\frac{1}{\tilde{r}_0}\big)=\frac{\sigma-s}{\sigma-1}$.
In order to make $\theta\in(0,1)$ we must have \begin{equation}\label{64}
1<s<\sigma \quad {\rm or } \quad\sigma<s<1.
\end{equation}  Set $q_1$ so that $
 \frac{1}{q}=\frac{1-\theta}{q_0}+\frac{\theta}{q_1}.
 $
 To make $q_1>0$ we need \begin{align}\label{60}
 \frac{1}{q_1}=\frac{1}{\theta}\left[\frac{1}{q}-(1-\theta)\frac{1}{2}\right]>0\Longleftrightarrow\frac{\gamma}{2}>\frac{s-1}{\sigma-1}\frac{1}{2}\Longleftrightarrow\gamma>\frac{s-1}{\sigma-1}.
 \end{align}
 Then  
 $
 \frac{1}{2}-\frac{1}{q_1}=\frac{1}{\theta}\left(\frac{1}{2}-\frac{1}{q}\right)\geq0\Longleftrightarrow q_1\geq2
 $ as $q\geq2$. Now choose $r_1$ so that $(q_1,r_1)$ is an admissible pair. 
 Then by  Theorem \ref{56} (i) we have \begin{equation}\label{59}
\left\n\int_0^te^{i(t-s)\la}F(s)ds\right\n_{L^{q_1}L^{r_1}}\lesssim\n F\n_{L^2L^{{2^*}'}}.
\end{equation}
Interpolating (see Lemma \ref{int}) of \eqref{58} and \eqref{59} we have 
\begin{equation}
\left\n\int_0^te^{i(t-\tau)\la}F(\tau)d\tau\right\n_{L^{2/\gamma}L^{\frac{2d}{d-2(s+\gamma-1)}}}\lesssim\n F\n_{L^2L^{(\frac{2d}{d-2(2-s)})'}}
\end{equation}
where $s\in A_{a,\gamma}=\left(1-\frac{d-2}{2(d-1)}\gamma,1+\frac{d-2}{2(d-1)}\gamma\right)\cap R_{a,\gamma}$.
Note that \eqref{60} is equivalent to
\begin{align*}\begin{cases}
s<1+(\sigma-1)\gamma\text{ if }\sigma>1\\
s>1-(1-\sigma)\gamma\text{ if }\sigma<1.
\end{cases}
\end{align*}This ensures  \eqref{64} and sets the conditions $s\in A_{a,\gamma}$.\\

{\it Step II:} Set $\frac{1}{q_0}=\frac{2}{\gamma},\frac{1}{r_0}=\frac{d-2(\sigma+\gamma-1)}{2d},\frac{1}{\tilde{q}_0}=\frac{1}{2},\frac{1}{\tilde{r}_0}=\frac{d-2(2-\sigma)}{2d},\frac{1}{q_1}=\frac{\gamma}{2}$ and $\frac{1}{r_1}=\frac{d-2\gamma}{2d}$. From Step I we have 
\begin{equation}\label{58'}
\left\n\int_0^te^{i(t-\tau)\la}F(\tau)d\tau\right\n_{L^{2/\gamma}L^{\frac{2d}{d-2(\sigma+\gamma-1)}}}\lesssim\n F\n_{L^2L^{(\frac{2d}{d-2(2-\sigma)})'}}
\end{equation} for appropriate $\sigma$. Set $\theta$ so that 
$
\frac{1}{r}=\frac{1-\theta}{r_0}+\frac{\theta}{r_1}.
$
Then $\frac{1}{r}-\frac{1}{r_0}
=\frac{\sigma-s}{d}$, $\frac{1}{r_1}-\frac{1}{r_0}
=\frac{\sigma-1}{d}$ and hence $\theta=\frac{\sigma-s}{\sigma-1}$. 
 Take $\tilde{q}_1$ so that $\frac{1}{\tilde{q}}=\frac{1-\theta}{\tilde{q}_0}+\frac{\theta}{\tilde{q}_1}$. To make $\tilde{q}_1>0$ as before we need  $\tilde{\gamma}>\frac{s-1}{\sigma-1}$. This ensures $\theta
\in(0,1)$ and sets the condition $s\in A_{a,\gamma\tilde{\gamma}}$. At last choose $\tilde{r}_1$ so that $(\tilde{q}_1,\tilde{r}_1)$ is an admissible pair. Then by Theorem \ref{56} (i) we have \begin{equation}\label{59'}
\left\n\int_0^te^{i(t-s)\la}F(s)ds\right\n_{L^{q_1}L^{r_1}}\lesssim\n F\n_{L^{\tilde{q}_1'}L^{{\tilde{r}_1}'}}.
\end{equation}
 Now the theorem follows from interpolation of \eqref{58'} and \eqref{59'}.
\end{proof}

\section{Application}\label{s4}
In this section, we study the scattering solutions of the Cauchy problem
\[\tag{NLS$_a$}\label{61}
i\frac{\partial}{\partial t}u(t,x)+ \mathcal{L}_a u(t,x)+ |u(t,x)|^{\frac{4}{d-2}}u(t,x) =0, \quad u(t_0,x)= u_0(x).
\]
We show that as an application of Theorem \ref{56} (iii), we can establish a stability result for this problem with $a\neq0$, similar to that of \cite[Theorem 2.14]{kenig2006global} for the case $a=0$. This stability result in turn will establish the existence of scattering solution for radial data in dimensions $3,4$ and $5$ by proceeding exactly as in Kenig and Merle \cite{kenig2006global}. In fact, when this project was in its final stage, we came across the very recent work of Yang \cite{yang2020scattering0} where the same result has been established using a different argument.  Therefore our work serves as an alternative proof of Theorem \ref{mt}.


In Subsection \ref{ss}, we present the stability of solutions to \eqref{61} in detail, and in Subsection \ref{scat} we outline the proof of the scattering result without details as the proof deviates very little from that of \cite{kenig2006global}.
\subsection{Stability of Solution.}\label{ss}
 Let  $I$ be an open interval in $\R, t_0 \in I$ and $u_0\in \hd$ . We say that  $u\in C(I, \dot{H}^1(\mathbb R^d))$ is a solution of \eqref{61} if  $\n\nabla u \n_{W(\tilde{I})} <\infty$ for all $\tilde{I}\subset \subset I$ and satisfy the integral equation
$$  u(t)= e^{i(t-t_0)\la} u_0+ i\int_{t_0}^{t}S_a(t-\tau)f(u(\tau))d\tau,$$
with  $f(u)= |u|^{\frac{4}{d-2}}u$. 
Then proceeding exactly as in the proof of Theorem 2.5 in \cite{kenig2006global}  by using Strichartz estimates with inverse square potential i.e. Theorem \ref{56} (and Corollary \ref{st_a}) we can establish the following local existence theorem.

\begin{prp}[Local existence]\label{gsd} Let $d\in\{3,4,5\}$ and  $a < \left(\frac{d-2}{2}\right)^2-\left(\frac{d-2}{d+2}\right)^2$. 
Then for every $A>0$ there exists  $\delta = \delta (A)>0$ such that for any interval $I\subset \R$ containing $t_0$ and $u_0\in \hd$ satisfying  $\|u_0\|_{\dot{H}^1}<A$ and $\|S_a(t-t_0)u_0\|_{S(I)} < \delta$,
 the Cauchy problem \eqref{61} has a unique solution in  $I$  with $\| \nabla u \|_{W(I)} < \infty,\quad  \|u\|_{S(I)} \leq 2 \delta$ .
Moreover, if $u_{0,k} \to u_0$ in  $\dot{H}^1(\mathbb R^d)$ , the corresponding solutions $u_k\to u$ in
 $C(I, \dot{H}^{1}(\mathbb R^d))$.
\end{prp}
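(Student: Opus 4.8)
The plan is to run the standard contraction-mapping argument for the energy-critical NLS, but now carried out with the Strichartz estimates for $\mathcal{L}_a$ from Theorem \ref{56} and the gradient-Strichartz estimates of Corollary \ref{st_a} in place of the free-equation estimates used in \cite[Theorem 2.5]{kenig2006global}. First I would fix the complete metric space
\[
X = \left\{ u \in C(I,\dot H^1(\rd)) : \|u\|_{S(I)} \le 2\delta,\ \|\nabla u\|_{W(I)} \le 2c_s A \right\},
\]
with metric given by the $S(I)$-norm (or by $\|\nabla(\cdot)\|_{W(I)}$; both work), and define on it the map
\[
\Phi(u)(t) = e^{i(t-t_0)\la}u_0 + i\int_{t_0}^t S_a(t-\tau) f(u(\tau))\,d\tau, \qquad f(u)=|u|^{4/(d-2)}u.
\]
The key analytic inputs are: (a) the homogeneous estimate $\|\nabla S_a(t-t_0)u_0\|_{W(I)} \lesssim \|u_0\|_{\dot H^1}$ and $\|S_a(t-t_0)u_0\|_{S(I)}<\delta$ by hypothesis; (b) the inhomogeneous gradient estimate from Corollary \ref{st_a}(ii), $\big\|\nabla \int_{t_0}^t S_a(t-\tau)G\,d\tau\big\|_{W(I)} \lesssim \|\nabla G\|_{L^{\tilde q'}L^{\tilde r'}}$ for an admissible pair $(\tilde q,\tilde r)$ chosen so that the chain rule / fractional Leibniz rule plus Hölder in space-time closes; the usual choice pairs $W(I)$ against itself via $\|\nabla f(u)\|_{L^{\tilde q'}L^{\tilde r'}} \lesssim \|u\|_{S(I)}^{4/(d-2)}\|\nabla u\|_{W(I)}$; and (c) the Sobolev embedding controlling $\|u\|_{S(I)}$ by $\|\nabla u\|_{W(I)}$ when needed. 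Here the constraint $a < \left(\frac{d-2}{2}\right)^2-\left(\frac{d-2}{d+2}\right)^2$ enters precisely to guarantee that the admissible pair $\big(q,r\big)=\big(\tfrac{2(d+2)}{d-2},\tfrac{2d(d+2)}{d^2+4}\big)$ used in $W(I)$ satisfies the restriction $q>2/\sqrt{(\tfrac{d-2}{2})^2-a}$ required by Corollary \ref{st_a}.

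Next I would show $\Phi$ maps $X$ to itself: applying (a), (b), (c) and the nonlinear estimate gives
\[
\|\nabla \Phi(u)\|_{W(I)} \le c_s\|u_0\|_{\dot H^1} + c\,\|u\|_{S(I)}^{4/(d-2)}\|\nabla u\|_{W(I)} \le c_s A + c\,(2\delta)^{4/(d-2)} 2 c_s A \le 2 c_s A
\]
once $\delta=\delta(A)$ is chosen small (absorbing the nonlinear term), and similarly $\|\Phi(u)\|_{S(I)} \le \delta + c(2\delta)^{4/(d-2)+1} \le 2\delta$. For the contraction property I would estimate $\|\Phi(u)-\Phi(v)\|_{S(I)}$ using the pointwise bound $|f(u)-f(v)| \lesssim (|u|^{4/(d-2)}+|v|^{4/(d-2)})|u-v|$, Hölder, and the Strichartz estimate (this time without a gradient, using Theorem \ref{56}), obtaining a factor $c\,\delta^{4/(d-2)}<\tfrac12$ for $\delta$ small. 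Banach fixed point then yields the unique solution $u\in X$, and in particular $\|u\|_{S(I)}\le 2\delta$, $\|\nabla u\|_{W(I)}<\infty$; continuity $u \in C(I,\dot H^1)$ follows from the integral equation together with strong continuity of $S_a(\cdot)$ on $\dot H^1$ (via Lemma \ref{NormEq}, $\dot H^1_a = \dot H^1$).

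Finally, for the stability/continuous-dependence claim: given $u_{0,k}\to u_0$ in $\dot H^1$, for $k$ large $\|u_{0,k}\|_{\dot H^1}<A$ and, since $S_a(t-t_0)$ is bounded on $\dot H^1 \to S(I)$-type spaces via Strichartz, $\|S_a(t-t_0)u_{0,k}\|_{S(I)}<\delta$ as well (possibly after shrinking $I$ around $t_0$, or it is automatic on the fixed $I$ once $\|S_a(t-t_0)u_0\|_{S(I)}<\delta$ strictly and $k$ is large), so Proposition \ref{gsd} produces solutions $u_k\in X$. Then I would subtract the integral equations for $u_k$ and $u$, apply the gradient inhomogeneous estimate to $f(u_k)-f(u)$, and use the difference nonlinear estimate plus the uniform bounds $\|u_k\|_{S(I)},\|u\|_{S(I)}\le 2\delta$ to get $\|\nabla(u_k-u)\|_{W(I)} \le c_s\|u_{0,k}-u_0\|_{\dot H^1} + c\delta^{4/(d-2)}\|\nabla(u_k-u)\|_{W(I)}$, whence $\|\nabla(u_k-u)\|_{W(I)}\to 0$; feeding this back into the Duhamel formula and using the homogeneous Strichartz bound gives $u_k\to u$ in $C(I,\dot H^1)$.

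The main obstacle I anticipate is bookkeeping the admissible pairs and the fractional Leibniz rule for $\nabla f(u)$ so that every space-time norm that appears is controlled by $S(I)$ and $W(I)$ only, \emph{and} simultaneously every pair used in Corollary \ref{st_a} satisfies $q>2/\sqrt{(\tfrac{d-2}{2})^2-a}$ — this is exactly why the hypothesis on $a$ is shaped the way it is, and in dimensions $d\in\{3,4,5\}$ the exponent $\tfrac{4}{d-2}\ge 1$ so the nonlinearity is smooth enough that the Leibniz rule is elementary; for $d$ larger one would need a more careful fractional chain rule, which is why the statement is restricted to $d\in\{3,4,5\}$. Everything else is the routine small-data contraction, identical in structure to \cite[Theorem 2.5]{kenig2006global}.
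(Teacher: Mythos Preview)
Your proposal is correct and matches the paper's approach: the paper gives no detailed proof, stating only that one proceeds exactly as in \cite[Theorem 2.5]{kenig2006global} using the Strichartz estimates of Theorem \ref{56} and Corollary \ref{st_a}, which is precisely the contraction-mapping scheme you outline. Your identification of the role of the hypothesis on $a$ is also on target; the paper phrases it as the condition needed for the norm equivalence $\|\cdot\|_{\dot W_a^{1,r}} \sim \|\cdot\|_{\dot W^{1,r}}$ with $r=\tfrac{2d(d+2)}{d^2+4}$ via Lemma \ref{NormEq}, which is exactly the inequality $q>2/\sqrt{(\tfrac{d-2}{2})^2-a}$ you isolate (Corollary \ref{st_a} being proved from Lemma \ref{NormEq}).
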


\begin{rmk}
{\rm Note that in the above result we have further restriction on $a$. This restriction comes to achieve equivalence of the norms $\n\cdot\n_{\dot{W}_a^{1,r}}$  and $\n\cdot\n_{\dot{W}^{1,r}}$ with $r=\frac{2d(d+2)}{d^2+4}$ (which would be used to prove Proposition \ref{gsd}, in a way similar to the proof of \cite[Theorem 2.5]{kenig2006global}),  see Corollary \ref{st_a} and Lemma \ref{NormEq}.
}\end{rmk}
Using the above Proposition, we can define the maximal interval of existence. It is easy to see from  Proposition \ref{gsd} by using the Sobolev inequality that \eqref{61} has a global Solution when the initial data is small enough. Also following the very same arguments as \cite[Lemma 2.11]{kenig2006global} we have,
\begin{lemma}[Finite time blow-up criterion]\label{sbu} Let $I=(-T_-(u_0),T_+(u_0))$ be the maximal interval of existence of solution to \eqref{61}.   If $T_{+}(u_0)< \infty,$ then
$\|u\|_{S([t_0, T_{+}(u_0)])}= \infty.$
A similar result holds for $T_{-}(u_ 0 ).$
\end{lemma}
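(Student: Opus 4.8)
\textbf{Proof proposal for Lemma \ref{sbu}.} The plan is to mimic the standard ``blow-up criterion'' argument for energy-critical NLS, now in the presence of the inverse-square potential, using the local theory of Proposition \ref{gsd} together with the Strichartz estimates of Theorem \ref{56} and Corollary \ref{st_a}. I argue by contradiction: suppose $T_+(u_0)<\infty$ but $\|u\|_{S([t_0,T_+(u_0)))}<\infty$. The goal is to show that the solution then extends past $T_+(u_0)$, contradicting maximality.

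First I would record the Duhamel/Strichartz bound on $\nabla u$. Writing $u(t)=S_a(t-t_0)u_0+i\int_{t_0}^{t}S_a(t-\tau)f(u(\tau))\,d\tau$ with $f(u)=|u|^{4/(d-2)}u$, apply $\nabla$ and use Corollary \ref{st_a}(i)--(ii) together with the fractional chain rule (and the norm equivalence $\dot W^{1,r}_a\simeq\dot W^{1,r}$ from Lemma \ref{NormEq}, valid under the restriction on $a$) to get, on any subinterval $J\ni t_0$,
\[
\|\nabla u\|_{W(J)}\lesssim \|u_0\|_{\dot H^1}+\|u\|_{S(J)}^{4/(d-2)}\|\nabla u\|_{W(J)}.
\]
Since $\|u\|_{S([t_0,T_+))}<\infty$, I can partition $[t_0,T_+)$ into finitely many subintervals $J_1,\dots,J_N$ on each of which $\|u\|_{S(J_\ell)}$ is so small that the nonlinear term can be absorbed; iterating over these finitely many pieces yields $\|\nabla u\|_{W([t_0,T_+))}<\infty$, hence also $\|u\|_{S([t_0,T_+))}<\infty$ with control of the full energy-critical norm up to the endpoint.

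Next I would show this forces the solution to extend. For $t$ close to $T_+$, consider the free evolution from time $t$ of the data $u(t)$, i.e. estimate $\|S_a(\cdot-t)u(t)\|_{S([t,T_++\epsilon))}$. Using the integral equation to write $S_a(\cdot - t)u(t)$ in terms of $S_a(\cdot-t_0)u_0$ and the Duhamel term, the Strichartz/dispersive bounds plus the already-established finiteness of $\|\nabla u\|_{W([t_0,T_+))}$ show that $\|S_a(\cdot-t)u(t)\|_{S([t,T_+))}\to 0$ as $t\uparrow T_+$ (tail of a convergent quantity). In particular, choosing $t$ close enough to $T_+$ we get $\|S_a(\cdot-t)u(t)\|_{S([t,T_++\epsilon))}<\delta(A)$ for a suitable $A$ bounding $\|u(t)\|_{\dot H^1}$ (uniformly bounded because $u\in C(I,\dot H^1)$ and, in fact, the energy is conserved). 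Then Proposition \ref{gsd} produces a solution on $[t,T_++\epsilon)$, which by uniqueness agrees with $u$ and extends it beyond $T_+$ — contradicting the definition of $T_+(u_0)$. The statement for $T_-(u_0)$ follows by the time-reversal symmetry $t\mapsto -t$ of \eqref{61}.

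The main obstacle I anticipate is purely the adaptation of the classical argument to $\mathcal L_a$: one must make sure every Strichartz estimate, every norm-equivalence $\dot W^{1,r}_a\simeq \dot W^{1,r}$, and every admissibility requirement $q>2/\sqrt{((d-2)/2)^2-a}$ used for the pair associated with $W(I)$ (here $r=\frac{2d(d+2)}{d^2+4}$) is genuinely available under the standing hypothesis $a<((d-2)/2)^2-((d-2)/(d+2))^2$ — this is exactly why that restriction on $a$ was imposed in Proposition \ref{gsd}. Granting those inputs, the combinatorial step (splitting $[t_0,T_+)$ into finitely many small-$S$-norm pieces and iterating the nonlinear estimate) and the continuity/extension step are routine, and the proof goes through essentially verbatim as in \cite[Lemma 2.11]{kenig2006global}.
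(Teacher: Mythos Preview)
Your proposal is correct and matches the paper's approach: the paper itself gives no proof of Lemma \ref{sbu} beyond stating that it follows ``the very same arguments as \cite[Lemma 2.11]{kenig2006global}'', and your sketch is precisely that argument adapted to $\mathcal{L}_a$, with the necessary caveats about norm equivalence (Lemma \ref{NormEq}) and admissibility under the restriction on $a$ already built into Proposition \ref{gsd}.
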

Now we can state the main theorem of this subsection:
\begin{theorem}[{\bf Long time perturbation}]\label{ltp}  Let $d\in\{3,4,5\}$, $a<\left(\frac{d-2}{2}\right)^2-\left(\frac{d-2}{d+2}\right)^2$ and $I$ be an open interval in $\R$ containing $t_0$. Let $\widetilde{u}$ be defined on $I \times \mathbb R^d$  and satisfy  $\sup_{t\in I} \|\widetilde{u}(t)\|_{\dot{H}^1} \leq A,  \|\widetilde{u}\|_{S(I)} \leq M$ for some constants $M, A>0$. Assume that $\widetilde{u}$ satisfies
$i\partial_t\widetilde{u} +\mathcal{L}_a\widetilde{u} + f (\widetilde{u})=g$, i.e.,
$$ \widetilde{u}(t)=S_a(t-t_0)\widetilde{u}(t_0)+ i \int_{t_0}^tS_a(t-\tau) (f(\widetilde{u}(\tau))-g(\tau))d\tau =0.$$
Then for every $A'>0$, there exists  $\epsilon_0 = \epsilon_0 (M,A, A', d)>0$ such that whenever
$$
\|u_0-\widetilde{u}(t_0)\|_{\dot{H}^1} \leq A',\quad \|\nabla g \|_{L^{2} (I,L^{\frac{2d}{d+2}})} \leq \epsilon,\quad  \|S_a(t-t_0)[u_0-\widetilde{u}(t_0)]\|_{S(I)} \leq \epsilon
$$
for some $0< \epsilon< \epsilon_0,$ then the Cauchy Problem \eqref{61}
has a solution $u$ defined on $I$ satisfying the estimate
 \[\|u\|_{S(I)} \leq  C(M, A, A', d)\quad\text{and}\quad\|u(t)-\tilde{u}(t)\|_{\dot{H}^1} \leq C(A, M, d) (A' + \epsilon) \ \ \text{for all} \ t\in I.\] 
\end{theorem}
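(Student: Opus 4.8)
The plan is to follow the standard long-time perturbation (stability) scheme of Kenig--Merle \cite{kenig2006global}, now fueled by the inverse-square Strichartz estimates from Theorem \ref{56} (iii) and the derivative-Strichartz bounds in Corollary \ref{st_a}, together with the norm equivalences in Lemma \ref{NormEq} (which is exactly why the restriction $a<\left(\frac{d-2}{2}\right)^2-\left(\frac{d-2}{d+2}\right)^2$ is imposed, so that $\n\cdot\n_{\dot W^{1,r}_a}\sim\n\cdot\n_{\dot W^{1,r}}$ for the relevant Strichartz exponents). First I would reduce to a small-data-on-subintervals argument: partition $I$ into finitely many subintervals $I_1,\dots,I_J$ (with $J=J(M,A,A',d)$) on each of which $\n\widetilde u\n_{S(I_j)}\le\eta$ for a small absolute $\eta$ to be fixed. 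On each subinterval one sets $w=u-\widetilde u$ and writes the integral equation
\[
w(t)=S_a(t-t_j)w(t_j)+i\int_{t_j}^tS_a(t-\tau)\bigl(f(\widetilde u+w)-f(\widetilde u)+g(\tau)\bigr)d\tau .
\]
The goal is to run a contraction/continuity argument for $\n\nabla w\n_{W(I_j)}$ (equivalently in the $\la$-adapted norm), controlling the error $\n S_a(t-t_j)w(t_j)\n_{S(I_j)}$ inductively.

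Second, I would estimate the nonlinear difference. By the pointwise bound $|f(\widetilde u+w)-f(\widetilde u)|\lesssim |w|^{\frac{4}{d-2}+1}+|\widetilde u|^{\frac{4}{d-2}}|w|$ and the corresponding bound for $\nabla$ of the difference (here $d\le5$ keeps the power $\frac{4}{d-2}\ge\frac43$ manageable and avoids issues with differentiating a low power), followed by H\"older in space-time and the Sobolev embedding $\dot W^{1,\frac{2d(d+2)}{d^2+4}}\hookrightarrow L^{\frac{2(d+2)}{d-2}}$, one gets
\[
\Bigl\n\nabla\!\!\int_{t_j}^tS_a(t-\tau)\bigl(f(\widetilde u+w)-f(\widetilde u)\bigr)d\tau\Bigr\n_{W(I_j)}\lesssim \bigl(\n\nabla w\n_{W(I_j)}^{\frac{4}{d-2}}+\eta^{\frac{4}{d-2}}\bigr)\n\nabla w\n_{W(I_j)} ,
\]
using Corollary \ref{st_a}(ii) (whose admissibility hypothesis $q>2/\sqrt{(\frac{d-2}{2})^2-a}$ is met for $q=\frac{2(d+2)}{d-2}$ precisely under our assumption on $a$). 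The forcing term contributes $\lesssim\n\nabla g\n_{L^2(I,L^{2d/(d+2)})}\le\epsilon$ by the same inhomogeneous estimate, and the data term contributes $\n S_a(\cdot-t_j)w(t_j)\n_{S(I_j)}$. A standard continuity-in-$\n\nabla w\n_{W(I_j)}$ bootstrap then gives, for $\eta$ small and $\epsilon<\epsilon_0$ small, a bound $\n\nabla w\n_{W(I_j)}\le C_j\epsilon$ on the first subinterval.

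Third, I would propagate the estimate across subintervals. From the bound on $I_j$ one controls $w(t_{j+1})$ in $\dot H^1$ (using the energy/Duhamel identity and $S_a(t)$ unitarity on $\dot H^1_a\sim\dot H^1$, plus $\n S_a(t-t_{j+1})w(t_{j+1})\n_{S(I_{j+1})}\lesssim \n w(t_{j+1})\n_{\dot H^1}$ via Corollary \ref{st_a}(i)), so the smallness of the data term on $I_{j+1}$ is inherited with a constant that grows by a fixed factor per step; after $J$ steps one still has smallness because $J$ depends only on $M,A,A',d$. Summing, $\n u\n_{S(I)}\le\n\widetilde u\n_{S(I)}+\n w\n_{S(I)}\le M+C(M,A,A',d)\epsilon\le C(M,A,A',d)$ and $\n u(t)-\widetilde u(t)\n_{\dot H^1}\le C(A,M,d)(A'+\epsilon)$ for all $t\in I$. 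Existence of the solution on all of $I$ follows by combining this a priori control with the local theory (Proposition \ref{gsd}) and the blow-up criterion (Lemma \ref{sbu}).

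The main obstacle is the bookkeeping of the accumulating constants across the partition together with the fact that the number of subintervals $J$ itself depends on the exit-time decomposition of $\widetilde u$ and hence on $M$ — one must choose $\epsilon_0$ after $J$ is fixed, and verify that the per-step growth (a geometric factor) times $\epsilon_0$ stays below the absolute smallness threshold needed by the contraction on each piece. A secondary technical point is ensuring all the Strichartz/Sobolev manipulations stay within the admissible range dictated by Corollary \ref{st_a} and Lemma \ref{NormEq}; this is exactly where the hypothesis $a<\left(\frac{d-2}{2}\right)^2-\left(\frac{d-2}{d+2}\right)^2$ and the dimensional restriction $d\in\{3,4,5\}$ are used, and I would simply invoke those lemmas rather than re-derive the estimates.
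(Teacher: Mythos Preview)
Your overall architecture (partition, Duhamel for $w=u-\widetilde u$, bootstrap on each piece, then iterate) is right, but the proposal has a genuine gap in the bootstrap step that the paper's argument is specifically designed to repair.

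You run the contraction in $\n\nabla w\n_{W(I_j)}$ using only the \emph{admissible} estimate of Corollary~\ref{st_a}(ii), and then assert that ``the data term contributes $\n S_a(\cdot-t_j)w(t_j)\n_{S(I_j)}$''. These two statements are incompatible: if the iteration norm is $\n\nabla w\n_{W(I_j)}$, the linear piece is $\n\nabla S_a(\cdot-t_j)w(t_j)\n_{W(I_j)}\lesssim\n w(t_j)\n_{\dot H^1}$, which is only $\le A'$, not $\le\epsilon$. Consequently the claim $\n\nabla w\n_{W(I_j)}\le C_j\epsilon$ is unjustified; for large $A'$ the nonlinear term $CB_j^{(d+2)/(d-2)}$ cannot be absorbed and the bootstrap does not close. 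Nor can you run the iteration directly in the $S$-norm, since $(\tfrac{2(d+2)}{d-2},\tfrac{2(d+2)}{d-2})$ is not admissible and Corollary~\ref{st_a} gives no closed inhomogeneous estimate there.

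The paper fixes this by \emph{not} iterating on the full gradient. It introduces a fractional exponent $0<\alpha<1$ close to $1$, sets $q=\tfrac{2(d+2)}{d-2}$, $\tfrac{1}{r}=\tfrac{d-2}{2(d+2)}+\tfrac{\alpha}{d}$, $\tilde q=2$ and a matching $\tilde r$, and iterates on $B_j:=\n D^\alpha w\n_{L^q(I_j,L^r)}$. The pair $(q,r)$ is \emph{non-admissible}, and the closed inhomogeneous estimate
\[
\Bigl\n\int_{t_j}^t S_a(t-\tau)h(\tau)\,d\tau\Bigr\n_{L^qL^r}\lesssim\n h\n_{L^{\tilde q'}L^{\tilde r'}}
\]
comes precisely from Theorem~\ref{56}(iii). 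The payoff is the interpolation bound
\[
\n D^\alpha S_a(\cdot-t_0)w(t_0)\n_{L^q(I,L^r)}\lesssim \n S_a(\cdot-t_0)w(t_0)\n_{S(I)}^{\,1-\alpha}\,\n w(t_0)\n_{\dot H^1}^{\,\alpha}\lesssim \epsilon^{1-\alpha}(A')^{\alpha},
\]
which is small because $\alpha<1$; this is what lets the bootstrap close and the geometric iteration $\gamma_{j+1}\le 5\gamma_j$ propagate. In your sketch Theorem~\ref{56}(iii) is name-checked but never actually used; it is in fact the decisive ingredient, and without it (or an equivalent exotic Strichartz estimate) the long-time perturbation argument in $d\in\{3,4,5\}$ does not go through.
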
 
%
%
%

\begin{proof}[{\bf Proof}]
First note that for any $u_0$ as in the statement of the theorem, the Cauchy problem \eqref{61} has a solution in a maximal interval of existence by Proposition \ref{gsd}. We prove that this solution satisfies the required a priori estimates. By blow-up alternative  i.e. Lemma \ref{sbu}, this will immediately imply that solution has to exist in all of $I$ as  $\|u\|_{S(I)} \leq  C(M, A, A', d)< \infty$.\\ 

\noindent
{\it STEP I:}
Let us show that $\n\nabla\widetilde{u}(t)\n_{W(I)}\leq M'=M'(A,M,d)<\infty$.  

For $\eta>0$ split $I$ into $\gamma=\gamma(M,\eta)$ intervals $I_1,I_2,\cdots,I_\gamma$ so that $\n \widetilde{u}(t)\n_{S(I_j)}\leq\eta$ for $j=1,2,\cdots,\gamma$. Then 
$$\widetilde{u}(t)=S_a(t-t_j)\widetilde{u}(t_j)+i\int_{t_j}^tS_a(t-\tau)f\circ\widetilde{u}(\tau)d\tau+i\int_{t_j}^tS_a(t-\tau) g(\tau)d\tau$$ for some $t_j\in I_j$ fixed. Then
\begin{align*}
\n \nabla\widetilde{u}\n_{W(I_j)}&\leq  cA+c\n \widetilde{u}\n_{S(I_j)}^{\frac{4}{d-2}}\n \nabla \widetilde{u}\n_{W(I_j)}+c\n \nabla g\n_{L^2(I_j,L^{\frac{2d}{d+2}}(\rd))}\\
&\leq cA+c\eta^{\frac{4}{d-2}}\n \nabla \widetilde{u}\n_{W(I_j)}+c\n \nabla g\n_{L^2(I_j,L^{\frac{2d}{d+2}}(\rd))}\leq c(A+\varepsilon)+\frac{1}{2}\n \nabla \widetilde{u}\n_{W(I_j)}
\end{align*}
choosing $\eta=\eta(d)>0$ small enough. Hence we have $\lVert \nabla\widetilde{u}\rVert_{W(I_j)}\leq 2c(A+\varepsilon)$ consequently by taking $\varepsilon_0\leq1$, we have $\lVert \nabla\widetilde{u}\rVert_{W(I)}\leq 2\gamma(\eta(d),M)c(A+1)=:M'(A,M,d).$ \\

\noindent
{\it STEP II:}
A priori estimate. 

 Here we follow  \cite{kenig}, where the case $a=0$ is dealt. Let us set  $q,r,\tilde{q},\tilde{r}$ by $q=\frac{2(d+2)}{d-2}$, $\frac{1}{r}=\frac{d-2}{2(d+2)}+\frac{\alpha}{d}$, $\tilde{q}=2$ and $\frac{1}{\tilde{r}}=\frac{d^2+2(1-\alpha)d-4\alpha}{2d(d+2)}$. If we write $\frac{1}{q}=\frac{\gamma}{2},\frac{1}{\tilde{q}}=\frac{\tilde{\gamma}}{2},\frac{1}{\tilde{r}}=\frac{d-2(2-s)}{2d}$ then we have $\gamma=\frac{d-2}{d+2}<1,\tilde{\gamma}=1$ and $s-1=1-\alpha$. Since Theorem \ref{56} (iii) is valid for $s$ in a neighbourhood of $1$, we conclude  
\begin{equation}\label{istri}
\left\n\int_0^tS_a(t-\tau)h(\tau)d\tau\right\n_{L^qL^r}\lesssim\n h\n_{L^{\tilde{q}'}L^{\tilde{r}'}}
\end{equation} is valid for $0<\alpha<1$ close enough to 1.
Then we have by fractional Hardy inequalities  
\begin{equation}\label{fhi}
\n f\n_{S(I)}\lesssim\n D^\alpha f\n_{L^qL^r}\lesssim\n\nabla f\n_{W(I)}
\end{equation}by interpolation
\begin{equation}\label{fri}
\n D^\alpha f\n_{L^q(I,L^r)}\lesssim\n f\n_{S(I)}^{1-\alpha}\n\nabla f\n_{W(I)}^\alpha
\end{equation} by Holder 
\begin{equation}
\n |u|^{4/(d-2)}D^\alpha u\n_{L^{\tilde{q}'}L^{\tilde{r}'}}\leq\n u\n_{S(I)}^{4/(d-2)}\n D^\alpha u\n_{L^qL^r}.
\end{equation}

Let $\eta>0$. Again split $I$ into $l=l(M,M',\eta)$ intervals $I_0,I_1,\cdots,I_{l-1}$ with $I_j=[t_j, t_{j+1}]$ so that $\lVert \widetilde{u}\rVert_{S(I_j)}\leq\eta$ and $\lVert D^\alpha\widetilde{u}\rVert_{L^q(I_j,L^r)}\leq\eta$ for $j=0,1,\cdots,l-1$.
Let us write $u=\widetilde{u}+w$. Then $w$ solves 
$$i\partial_tw+\la w+f(\widetilde{u}+w)-f(\widetilde{u})=-g$$with $w(t_0)=u_0-\widetilde{u}(t_0)$ if $u$ solves \eqref{61}.
Now in order to solve for $w,$  we need to solve, in $I_j,$ the integral equation
 \begin{equation}\label{p1}
 w(t)=S_a(t-t_j)e^{i(t-t_j) \mathcal{L}_a}w(t_j)+i\int_{t_j}^tS_a(t-\tau)[f(\widetilde{u}+w)-f(\widetilde{u})](\tau)d\tau+i\int_{t_j}^tS_a(t-\tau) g(\tau)d\tau.
 \end{equation}

Put $B_j=\n D^\alpha w\n_{L^q(I_j,L^r)},\gamma_j=\n D^\alpha e^{i (t-t_j) \mathcal{L}_a}w(t_j)\n_{L^q(I_j,L^r)}+c\varepsilon$ and $N_j(w,\widetilde{u})=\n D^\alpha[(f\circ(\widetilde{u}+w))-(f\circ\widetilde{u} )]\n_{L^{\tilde{q}'}(I_j,L^{\tilde{r}'})}$.
Then by \eqref{istri} (see also Remark \ref{rm3} below also)
\begin{align*}
B_j\leq\gamma_j+cN_j(w,\widetilde{u}).
\end{align*}
Now by fractional Leibnitz and chain rule 
\begin{align*}
N_j(w,\widetilde{u})&\lesssim\left(\lVert \widetilde{u}\rVert_{S(I_j)}^{\frac{4}{d-2}}+\lVert w\rVert_{S(I_j)}^{\frac{4}{d-2}}\right)\lVert D^\alpha w\rVert_{L^q(I_j,L^r)}\\
&\quad+\lVert w\rVert_{S(I_j)}\left(\lVert \widetilde{u}\rVert_{S(I_j)}^{\frac{6-d}{d-2}}+\lVert w\rVert_{S(I_j)}^{\frac{6-d}{d-2}}\right)\left(\lVert D^\alpha \widetilde{u}\rVert_{L^q(I_j,L^r)}+\lVert D^\alpha w\rVert_{L^q(I_j,L^r)}\right).
\end{align*}
Therefore $
B_j\leq\gamma_j+c\eta^{\frac{4}{d-2}}B_j+cB_j^{\frac{d+2}{d-2}}
$ and choosing $\eta>0$ small 
\begin{align*}
B_j\leq 2\gamma_j+cB_j^{\frac{d+2}{d-2}}=2\gamma_j+cB_j^{\frac{4}{d-2}}B_j.
\end{align*}
This implies if $B_j\leq\left(\frac{1}{2c}\right)^{\frac{d-2}{4}}=:c_0$ (so that $cB_j^{\frac{4}{d-2}}\leq\frac{1}{2}$) then $B_j\leq 4\gamma_j$.
Hence we have \begin{align*}
\n\nabla w\n_{W(I_j)}\leq 4\left(\n e^{-i (t-t_j) \mathcal{L}_a}w(t_j)\n_{W(I)}+c\varepsilon\right)\quad{\rm provided }\ B_j\leq c_0.
\end{align*}
Now 
put $t=t_{j+1}$ in the integral formula \eqref{p1},  and apply  $S_a(t-t_{j+1})$  to we obtain
\begin{align*}
S_a(t-t_{j+1})w(t_{j+1})  &=S_a(t-t_j)  w(t_j) +i  \int_{t_j}^{t_{j+1}}S_a(t-\tau) [f(\tilde{u}+w)- f(\tilde{u})](\tau) d\tau \\
&\quad + i \int_{t_j}^{t_{j+1}}S_a(t-\tau) g(\tau) d\tau.
\end{align*}
Therefore as before provided $B_j \leq c_0$ we have
\begin{align*}
\n D^\alpha S_a(t-t_{j+1}) w(t_{j+1})\n_{L^q(I_j,L^r)}&\leq\n D^\alpha S_a(t-t_j)w(t_j)\n_{L^q(I_j,L^r)}+c\varepsilon+c\eta^{\frac{4}{d-2}}B_j+cB_j^{\frac{d+2}{d-2}}\\
&\leq\gamma_j+c\eta^{\frac{4}{d-2}}B_j+2\gamma_j\leq3\gamma_j+c\eta^{\frac{4}{d-2}}4\gamma_j
\end{align*}
and choosing $\eta>0$ small we get $\gamma_{j+1}\leq 5\gamma_j$.
 Note that by \eqref{fri}
\begin{align*}
\n D^\alpha e^{-i(t-t_j)\la}w(t_j)\n_{L^q(I,L^r)}&\lesssim\n e^{-i(t-t_j)\la}w(t_j)\n_{S(I)}^{1-\alpha}\n\nabla e^{-i(t-t_j)\la}w(t_j)\n_{W(I)}^\alpha\\
&\lesssim\n e^{-i(t-t_j)\la}w(t_j)\n_{S(I)}^{1-\alpha}\n w(t_j)\n_{\dot{H}^1}^\alpha.
\end{align*}
Therefore by the hypothesis
that $\gamma_0 \leq \varepsilon^{1-\beta}A'+c\varepsilon.$ Iterating, we have $\gamma_j \leq  5^j(\varepsilon^{1-\beta}A'+c \varepsilon)$ if $B_j \leq c_0.$ 
Thus $B_j\leq4\gamma_j\leq 5^j4(\varepsilon^{1-\beta}A'+c \varepsilon)$ if $B_j \leq c_0.$ Choose $\varepsilon_0=\varepsilon_0(c,l)=\varepsilon_0(c,M,M',\eta)=\varepsilon_0(c,M,A,d)>0$ so that $5^l4(\varepsilon_0^{1-\beta}A'+c \varepsilon_0)=c_0$.

Therefore for $0<\varepsilon<\varepsilon_0$ we have $\n D^\alpha w\n_{L^q(I,L^r)}\leq 5^ll4(\varepsilon^{1-\beta}A'+c \varepsilon)$ and hence by \eqref{fhi} $\n w\n_{S(I)}\leq c5^ll4(\varepsilon^{1-\beta}A'+c \varepsilon)$. Using Strichartz again we get $\n w(t)\n_{\dot{H}^1}\leq C(\varepsilon^{1-\beta}A'+c \varepsilon)$ for all $t\in I$. This proves the required estimates and hence the theorem.
\end{proof}

\begin{rmk}\label{rm3}
{\rm
Since  we are applying the Strichartz estimates on the $\alpha$-fractional derivative, the equivalences of the homogeneous Sobolev norms $\n\cdot\n_{\dot{W}_a^{\alpha,r}}$, $\n\cdot\n_{\dot{W}^{\alpha,r}}$ and $\n\cdot\n_{\dot{W}_a^{\alpha,\tilde{r}'}}$, $\n\cdot\n_{\dot{W}^{\alpha,\tilde{r}'}}$ play  roles here. Note that the first equivalence is a consequence of the restriction on $a$ and the second one is true for all $a$. In the case when $a=0$ such issue does not arise. 
}\end{rmk}

\subsection{Scattering of Solutions}\label{scat}
In this subsection, we outline a  proof of the scattering result, 
see Theorem \ref{mt} below for the exact statement. First, we define the ground state solution $W_a$ and energy of a solution of \eqref{61}:

\begin{dfn}\hspace{0cm} 
{\rm(i)} Given $a<\left(\frac{d-2}{2}\right)^2$, we define  $\beta >0 $ via  $a=(\frac{d-2}{2})^2 [1-\beta^2] $. 
Then define the function (ground state solution) by
$W_a(x):=[d(d-2)\beta^2]^{\frac{d-2}{4}} \big[  \frac{|x|^{\beta -1}}{1+ |x|^{2\beta}}\big]^{(d-2)/{2}}.$ \\
{\rm(ii)} By 
$E_a(u(t))= \int_{\mathbb R^d}\big(\frac{1}{2} |\nabla u(t,x)|^2- \frac{a}{2|x|^2}|u(t,x)|^2 - \frac{1}{2^*} |u(t,x)|^{2^*}\big) dx$, we define the Energy $E_a(u)$ of a solution $u$ corresponding to our problem.
\end{dfn}
For details of ground state solutions, one can see \cite{terracini1996positive,catrina2001caffarelli,dolbeault2016rigidity}.
Note that the energy $E(u)$ is conserved for a solution $u$ to \eqref{61} throughout the maximal interval of existence, see \cite[Lemma 3.6]{okazawa2012cauchy}.
Now we are in a position to state the scattering result: 
\begin{theorem}[Scattering of Solution]\label{mt}
 Let $d\in\{3,4,5\}$ and $a<\left(\frac{d-2}{2}\right)^2-\left(\frac{d-2}{d+2}\right)^2.$ Assume that  $E_{a}(u_0) < E_{a\vee 0}(W_{a\vee 0})$ and $\|u_0\|_{\dot{H}^1_a} < \|W_{a\vee 0}\|_{\dot{H}^1_{a\vee0}}$ and $u_0$ is radial. Then the solution $u$ to \eqref{61}
with data at $t=0$  is defined for all time with $\n u\n_{S(\R)}<\infty$ and there exists $u_{0, +}, u_{0, -}$ in $\dot{H}^1$ such that 
$$ \lim_{t\to + \infty} \|u(t)- e^{it \mathcal{L}_a}u_{0, +} \|_{\dot{H}^1}=0,\quad \lim_{t\to - \infty} \|u(t)- e^{it \mathcal{L}_a}u_{0, -} \|_{\dot{H}^1}=0.$$

\end{theorem}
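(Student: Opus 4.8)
The plan is to run the Kenig--Merle concentration--compactness and rigidity scheme, now powered by the inhomogeneous Strichartz estimates for $\la$ in Theorem \ref{56} (and Corollary \ref{st_a}) and by the long-time perturbation statement Theorem \ref{ltp}. \emph{Step 1 (variational analysis and reduction).} Using the sharp Sobolev/Hardy inequality associated with $\la$, one shows that under the hypotheses $E_a(u_0)<E_{a\vee0}(W_{a\vee0})$ and $\n u_0\n_{\dot H^1_a}<\n W_{a\vee0}\n_{\dot H^1_{a\vee0}}$ the kinetic energy $\n u(t)\n_{\dot H^1_a}^2$ stays strictly below the threshold throughout the maximal interval, the energy is coercive, $\n u(t)\n_{\dot H^1_a}^2\sim E_a(u)$ uniformly in $t$, and the sign of $\n u(t)\n_{\dot H^1_a}^2-\n u(t)\n_{L^{2^*}}^{2^*}$ is conserved. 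Together with Lemma \ref{sbu}, this reduces the theorem to the global space-time bound $\n u\n_{S(\R)}<\infty$; once that is available, writing $u$ via the Duhamel formula and invoking Corollary \ref{st_a} produces the asymptotic states $u_{0,\pm}\in\dot H^1$ by a routine Cauchy-in-$\dot H^1$ argument.

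\emph{Step 2 (critical element).} Assume the global bound fails and set
\[
E_c=\sup\{E>0\ :\ \text{every radial subthreshold datum with }E_a(u_0)<E\text{ has }\n u\n_{S(\R)}<\infty\},
\]
so that $0<E_c<E_{a\vee0}(W_{a\vee0})$ by Step 1 and Proposition \ref{gsd}. Take radial data $u_{0,n}$ with $E_a(u_{0,n})\downarrow E_c$ whose solutions fail the global bound, and apply a linear profile decomposition for $\{S_a(\cdot)u_{0,n}\}$ in $\dot H^1$ along the scaling group $G=\{T_\lambda\}$; since $e^{it\la}$ commutes with $T_\lambda$ but not with translations, radiality of the data forces only scales and time shifts to appear. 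Superposing the nonlinear profiles with Theorem \ref{ltp} and using the minimality of $E_c$, exactly one profile survives, yielding a \emph{critical element}: a global radial solution $u_c$ with $E_a(u_c)=E_c$, $\n u_c\n_{S(\R)}=\infty$, whose orbit $\{u_c(t):t\in\R\}$ is precompact in $\dot H^1/G$. By Lemma \ref{16} and Theorem \ref{63}, there is $\lambda:\R\to(0,\infty)$ such that $\{T_{\lambda(t)}^{-1}u_c(t):t\in\R\}$ is precompact in $\dot H^1$; radiality again excludes a spatial-translation parameter.

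\emph{Step 3 (rigidity).} It remains to show $u_c\equiv0$. A localized virial/Morawetz computation — in which the term $a/|x|^2$, homogeneous of degree $-2$, enters with the correct sign and scaling — rules out finite-time concentration, and after a spatial truncation at scale $R$ gives, for the global solution, a monotonicity-plus-coercivity inequality driven by the gap $E_{a\vee0}(W_{a\vee0})-E_c>0$. The precompactness of $\{T_{\lambda(t)}^{-1}u_c(t)\}$ controls the truncation errors and bounds the admissible range of $\lambda(t)$ (no escape to $0$ or $\infty$); letting $R\to\infty$ then forces $\n u_c\n_{\dot H^1_a}=0$, contradicting $E_c>0$. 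Hence $\n u\n_{S(\R)}<\infty$ and, by Step 1, the theorem follows.

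The principal obstacle is \emph{Step 2}: establishing a clean linear/nonlinear profile decomposition adapted purely to the dilation symmetry of $e^{it\la}$ and gluing the profiles via Theorem \ref{ltp} in the fractional-derivative Strichartz setting — this is exactly where the restriction $a<(\frac{d-2}{2})^2-(\frac{d-2}{d+2})^2$ and the radiality hypothesis are genuinely used, and where the Sobolev-norm equivalences of Lemma \ref{NormEq} must be invoked with care (cf. Remark \ref{rm3}). The rigidity step is then a fairly standard adaptation of the $a=0$ argument of Kenig--Merle \cite{kenig2006global}, the homogeneity of $|x|^{-2}$ making the virial identity go through essentially verbatim.
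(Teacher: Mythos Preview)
Your sketch is correct and follows essentially the same Kenig--Merle road map as the paper, which packages your Step~2 as Proposition~\ref{ec} (via Theorems~\ref{ltp}, \ref{cc}, \ref{63} and Lemma~\ref{16}) and your Step~3 as Proposition~\ref{62} (via Theorem~\ref{ct} and Proposition~\ref{vi}), concluding by the same contradiction. One minor imprecision: in Step~2 the critical element $u_c$ is not yet known to be global --- compactness is first obtained only on $I_+$ (or $I_-$) of the maximal interval $I$, and the finite-time case is disposed of inside the rigidity step, as you yourself indicate at the start of Step~3.
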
  

Before giving the proof of Theorem \ref{mt}, we state a few preliminaries form early works:

\begin{theorem}[Coercivity, see Corollary 7.6 in \cite{killip2017energy}] \label{ct} Let $d\geq 3$ and $a<\left( \frac{d-2}{2}\right)^{2}.$ Let  $u:I \times \mathbb R^d \to \mathbb C$ be a solution to $\eqref{61}$ with initial data  $u(t_ 0 ) = u_ 0\in \dot{H}^1(\mathbb R^d)$ for some $t_0 \in I.$ 
Assume $E_a (u_ 0 ) \leq (1 -\delta_0 )E_{a\vee 0} (W_{a\vee 0} )$ for some $\delta_0 >0$. Then there exist positive constants  $\delta_1$ and $c$ depending on $d, a, \delta_0$, such that 
 if $\|u_0\|_{\dot{H}^1_a} \leq \|W_{a\vee 0} \|_{\dot{H}^1_{a\vee 0}},$ then for all $t\in I$
\begin{enumerate}
\item[(i)] \label{uec} $ \|u(t)\|_{\dot{H}^1_a} \leq (1-\delta_1) \|W_{a\vee 0} \|_{\dot{H}^1_{a\vee 0}}$.
\item[(ii)] $\int_{\mathbb R^d} |\nabla u(t,x)|^2 + \frac{a}{|x|^2} |u(t,x)|^2- |u(t,x)|^{\frac{2d}{d-2}} dx \geq c \|u(t)\|_{\dot{H}^1_a}^2.$
\item[(iii)] $ c\|u(t)\|^2_{\dot{H}^1_a} \leq 2 E_a(u) \leq  \|u(t)\|^2_{\dot{H}^1_a}$.
\end{enumerate}
\end{theorem}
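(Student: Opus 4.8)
The plan is to run the Kenig--Merle energy--trapping argument with the free Laplacian replaced by $\la$. The single analytic input about the ground state is the sharp Sobolev inequality $\n u\n_{L^{2^*}}^{2}\le C_{a\vee0}^{2}\,\n u\n_{\dot H^1_{a\vee0}}^{2}$, where $C_{a\vee0}$ is the best constant and, for $a\ge0$, it is attained exactly by the dilates of $W_a$; together with the scaling (Pohozaev) identity $\n W_{a\vee0}\n_{L^{2^*}}^{2^*}=\n W_{a\vee0}\n_{\dot H^1_{a\vee0}}^{2}=:y_*$ this gives $E_{a\vee0}(W_{a\vee0})=(\tfrac12-\tfrac1{2^*})y_*=\tfrac1d y_*$. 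Since $\n u\n_{\dot H^1_a}^{2}=\n\nabla u\n^{2}-a\n u/|x|\n^{2}\ge\n\nabla u\n^{2}=\n u\n_{\dot H^1_0}^{2}$ when $a<0$, the bound $\n u\n_{L^{2^*}}^{2}\le C_{a\vee0}^{2}\,\n u\n_{\dot H^1_a}^{2}$ is valid in all cases $a<(\tfrac{d-2}{2})^{2}$.

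First I would introduce the scalar function $h(y)=\tfrac12 y-\tfrac1{2^*}C_{a\vee0}^{2^*}y^{2^*/2}$ on $[0,\infty)$. From $h'(y)=\tfrac12\big(1-C_{a\vee0}^{2^*}y^{2^*/2-1}\big)$ and the extremality identity $C_{a\vee0}^{2^*}y_*^{2^*/2-1}=1$ one reads off that $h$ increases strictly on $[0,y_*]$, decreases strictly on $[y_*,\infty)$, and $h(y_*)=E_{a\vee0}(W_{a\vee0})$. Writing $y(t)=\n u(t)\n_{\dot H^1_a}^{2}$, the Sobolev bound yields the pointwise estimate $E_a(u(t))=\tfrac12 y(t)-\tfrac1{2^*}\n u(t)\n_{L^{2^*}}^{2^*}\ge h\big(y(t)\big)$ for every $t\in I$.

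Next comes the continuity/bootstrap step. By hypothesis $y(t_0)\le y_*$, and by conservation of $E_a$ along the $C(I,\hd)$-flow, $h(y(t))\le E_a(u(t))=E_a(u_0)\le(1-\delta_0)h(y_*)$ for all $t\in I$. The sublevel set $\{y\ge0:h(y)\le(1-\delta_0)h(y_*)\}$ has the form $[0,y_1]\cup[y_2,\infty)$ with $0<y_1<y_*<y_2$, and quantitative strict monotonicity of $h$ near $y_*$ gives $y_1=(1-\delta_1)y_*$ for some $\delta_1=\delta_1(d,a,\delta_0)>0$. Since $t\mapsto y(t)$ is continuous, $y(t_0)\le y_1$, and $y(t)$ can never cross the gap $(y_1,y_2)$, we conclude $y(t)\le(1-\delta_1)y_*$ for all $t\in I$, which is (i). For (ii) I would use the Sobolev bound once more: $\n u(t)\n_{L^{2^*}}^{2^*}\le C_{a\vee0}^{2^*}y(t)^{2^*/2}=\big(y(t)/y_*\big)^{2/(d-2)}y(t)\le(1-\delta_1)^{2/(d-2)}y(t)$, hence $\n u(t)\n_{\dot H^1_a}^{2}-\n u(t)\n_{L^{2^*}}^{2^*}\ge c\,y(t)$ with $c=1-(1-\delta_1)^{2/(d-2)}>0$, the inequality asserted in (ii). Part (iii) is then elementary: $2E_a(u)\le\n u(t)\n_{\dot H^1_a}^{2}$ follows from $\n u(t)\n_{L^{2^*}}^{2^*}\ge0$, while $2E_a(u)=y(t)-\tfrac2{2^*}\n u(t)\n_{L^{2^*}}^{2^*}\ge\big(1-\tfrac2{2^*}\big)y(t)=\tfrac2d\,y(t)$ using $\n u(t)\n_{L^{2^*}}^{2^*}\le y(t)$ from (ii).

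The hard part is entirely in the first paragraph --- the sharp $\dot H^1_{a\vee0}\hookrightarrow L^{2^*}$ inequality with the correct optimal constant, the identification of the extremizer with $W_{a\vee0}$, and the (mildly delicate) point that for $a<0$ one still controls $\n u\n_{L^{2^*}}$ by $\n u\n_{\dot H^1_a}$ with the free constant. This is precisely the variational analysis of Killip--Miao--Murphy--Visan--Zheng \cite{killip2017energy}, which I would invoke directly; everything downstream is the soft energy-trapping argument, and it transfers from the $a=0$ case of \cite{kenig2006global} once the sharp constant and conservation of $E_a$ are in hand.
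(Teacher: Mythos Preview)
The paper does not supply its own proof of this theorem; it is quoted verbatim as Corollary~7.6 of \cite{killip2017energy}. Your argument is the standard Kenig--Merle energy-trapping scheme adapted to $\la$, which is exactly what the cited reference carries out, so there is nothing to compare against and your sketch is correct. One small remark: in the paper's statement of (ii) the sign in front of $\frac{a}{|x|^2}$ does not match the paper's own convention $\n u\n_{\dot H^1_a}^{2}=\n\nabla u\n^{2}-a\n u/|x|\n^{2}$; what you prove, namely $\n u(t)\n_{\dot H^1_a}^{2}-\n u(t)\n_{L^{2^*}}^{2^*}\ge c\,\n u(t)\n_{\dot H^1_a}^{2}$, is the correct reading and is what is actually used downstream (e.g.\ in the virial/rigidity step).
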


\begin{theorem}[Concentration  compactness, see Lemma 4.3 in \cite{kenig2006global}, Theorem 3.1 in \cite{killip2017energy}, \cite{yang2020scattering}]\label{cc}
Assume $a<\left(\frac{d-2}{2}\right)^2-\left(\frac{d-2}{d+2}\right)^2.$ Let  $\{v_{0,n} \}\in \dot{H}^{1}(\rd)$, $\|v_{0,n}\|_{\dot{H}^1}< A,$ $v_{0,n}$ is radial for all $n\in \mathbb N.$ Assume that $\|e^{it\mathcal{L}_a}v_{0,n}\|_{S(\mathbb R)} \geq \delta >0,$ where $\delta= \delta(A)$ is as in Proposition \ref{gsd}. Then there exist a sequence $\{V_{0, j}\}_{j=1}^{\infty}$ in $\dot{H}^1(\rd)$, a subsequence of $\{v_{0,n}\}$ (which we still call $\{v_{0,n}\}$) and a couple $(\lambda_{j,n}, t_{j,n}) \in(0,\infty) \times \mathbb R,$ with 
$$ \frac{\lambda_{j,n}}{\lambda_{j',n}} + \frac{\lambda_{j',n}}{\lambda_{j,n}} +  \frac{|t_{j,n}- t_{j', n}|}{\lambda^2_{j',n}} \to \infty $$
as $n\to \infty$ for $j\neq j'$ such that 
$
 \|V_{0,1}\|_{\dot{H}^1} \geq \alpha_{0}(A) >0.
$
If $V_{j}^{l} (x,t):= e^{it \mathcal{L}_a} V_{0,j}(x),$ then, given $\epsilon_0 >0,$
there exists  $J=J(\epsilon_0)$ and  $\{w_n\}_{n=1}^{\infty} \in \dot{H}^{1}(\rd),$ so that 
\begin{enumerate}
\item[(i)] $v_{0,n}= \sum_{j=1}^{J} \frac{1}{\lambda_{j,n}^{(d-2)/2}} V_{j}^{l} \left(-\frac{t_{j,n}}{\lambda_{j,n}^2},\frac{x}{\lambda_{j,n}}\right) +w_n$
\item[(ii)] $\n e^{it\mathcal{L}_a} w_n\|_{S(\mathbb R)} \leq \epsilon_0$
\item [(iii)]$\|v_{0,n}\|^2_{\dot{H}^1_a} = \sum_{j=1}^{J} \|V_{0,j}\|^2_{\dot{H}^1_a} + \|w_n\|^2_{\dot{H}^1_a} +o(1)$ as $n\to\infty$
\item[(iv)] $E_a(v_{0,n})= \sum_{j=1}^{J} E_a\left(V_{j}^{l} \left(\frac{-t_{j,n}}{\lambda_{j,n}^2}\right)\right) +E_a(w_n) + o(1)$ as $n\to\infty$.
\end{enumerate}
In addition we may assume that for each $j$ either $\frac{t_{j,n}}{\lambda_{j,n}^2}\equiv0$ or $\frac{t_{j,n}}{\lambda_{j,n}^2}\to\infty$ as $n\to\infty$.
\end{theorem}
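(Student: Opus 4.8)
The plan is to establish this as a linear profile decomposition for the propagator $e^{it\mathcal{L}_a}$ on bounded radial sequences in $\dot H^1(\rd)$, following the Bahouri--G\'erard/Keraani scheme as adapted to dispersive equations by Kenig--Merle, with the one essential structural change that the only admissible symmetries are the $\dot H^1$-isometric dilations $f\mapsto\lambda^{-(d-2)/2}f(\cdot/\lambda)$ together with time translations $t\mapsto t-t_{j,n}$: since $\mathcal{L}_a$ is \emph{not} translation invariant, spatial translations must be excluded, and radiality of the data keeps this consistent (no translation bubbles can arise). The heart of the argument is an \emph{inverse Strichartz inequality}: if $\{f_n\}$ is bounded in $\dot H^1$ with $\liminf_n\|e^{it\mathcal{L}_a}f_n\|_{S(\R)}\geq\delta$, then after passing to a subsequence there are parameters $(\lambda_n,t_n)\in(0,\infty)\times\R$ and a nonzero $V_0\in\dot H^1$ with $\lambda_n^{(d-2)/2}\big(e^{it_n\mathcal{L}_a}f_n\big)(\lambda_n\cdot)\rightharpoonup V_0$ in $\dot H^1$ and a quantitative lower bound $\|V_0\|_{\dot H^1}\gtrsim\delta^{\beta_1}A^{-\beta_2}$ for some exponents $\beta_1,\beta_2>0$.

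To prove the inverse Strichartz inequality I would first run a \emph{refined Strichartz/refined Sobolev} argument in the inverse-square setting. Using the Littlewood--Paley projections $P^a_N$ adapted to $\mathcal{L}_a$ (whose $L^p$-boundedness, Bernstein inequalities and square-function estimates follow from the Gaussian bounds on the heat kernel $e^{t\mathcal{L}_a}$), together with the Strichartz estimates of Theorem \ref{56} and Corollary \ref{st_a} and the Sobolev-norm equivalence $\dot W^{s,p}_a\cong\dot W^{s,p}$ of Lemma \ref{NormEq}, one obtains an estimate of the shape $\|e^{it\mathcal{L}_a}f\|_{S(\R)}\lesssim\|f\|_{\dot H^1}^{1-\theta}\big(\sup_{N}\,\text{localized quantity}\big)^{\theta}$, where in the radial case the frequency-localized supremum need not carry a spatial translation. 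This forces a frequency scale $N_n$ and a time $t_n$ along which a fixed proportion of the Strichartz norm is captured; rescaling by $\lambda_n=N_n^{-1}$ and invoking the exact scaling covariance $S_a(t)\big[\lambda^{(d-2)/2}g(\lambda\cdot)\big](x)=\lambda^{(d-2)/2}\big(S_a(\lambda^2 t)g\big)(\lambda x)$ of $\mathcal{L}_a$ normalizes the bubble, weak-$\ast$ compactness in $\dot H^1$ produces $V_0$, and testing the rescaled free evolution against $V_0$ yields the lower bound on $\|V_0\|_{\dot H^1}$.

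Given the inverse Strichartz inequality, the decomposition is built by the standard inductive extraction: put $v^0_n=v_{0,n}$; at stage $j$, if $\limsup_n\|e^{it\mathcal{L}_a}v^{j}_n\|_{S(\R)}\leq\varepsilon_0$, stop with $w_n=v^j_n$, giving (ii); otherwise extract $(\lambda_{j+1,n},t_{j+1,n},V_{0,j+1})$, set $v^{j+1}_n=v^j_n-\lambda_{j+1,n}^{-(d-2)/2}\big(S_a(t_{j+1,n})V_{0,j+1}\big)(\cdot/\lambda_{j+1,n})$, and pass to the diagonal subsequence. Two ingredients drive the conclusions: asymptotic orthogonality of parameters, $\lambda_{j,n}/\lambda_{j',n}+\lambda_{j',n}/\lambda_{j,n}+|t_{j,n}-t_{j',n}|/\lambda_{j',n}^2\to\infty$ for $j\neq j'$, proved as in Keraani by contradiction (otherwise the $j'$-th bubble would leak into the $j$-th weak limit); and the Pythagorean expansion (iii), which follows from this orthogonality together with $\|\cdot\|_{\dot H^1_a}\cong\|\cdot\|_{\dot H^1}$, so that the cross terms $\langle V_{0,j},\text{rescaled }V_{0,j'}\rangle_{\dot H^1_a}\to0$. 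The uniform bound $\|V_{0,1}\|_{\dot H^1}\geq\alpha_0(A)$ is exactly the gain from the first extraction, and (iii) with $\|v_{0,n}\|_{\dot H^1}\leq A$ shows $\sum_j\|V_{0,j}\|^2_{\dot H^1}<\infty$, whence only finitely many profiles carry Strichartz norm above $\varepsilon_0/2$ and a finite $J=J(\varepsilon_0)$ suffices in (ii). For the energy decoupling (iv) I would combine (iii) (which handles the quadratic, potential-including part of $E_a$) with a Brezis--Lieb/$L^{2^*}$-orthogonality argument for the $\frac{1}{2^*}\|\cdot\|_{L^{2^*}}^{2^*}$ term, using again the parameter orthogonality and the scaling invariance of $\dot H^1\hookrightarrow L^{2^*}$. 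The dichotomy that each $t_{j,n}/\lambda_{j,n}^2$ is either identically $0$ or tends to $\pm\infty$ is arranged by one more diagonal extraction: the rescaled times converge in $[-\infty,\infty]$, and any finite limit $t_\infty$ is absorbed by replacing $V_{0,j}$ with $e^{it_\infty\mathcal{L}_a}V_{0,j}$.

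The step I expect to be the main obstacle is the inverse Strichartz inequality itself: unlike the free case one cannot conjugate by $e^{it\Delta}=M_tD_t\mathcal F M_t$, so the refined bound must be assembled from the heat-kernel/square-function machinery for $\mathcal{L}_a$, and one must verify that the extracted bubbles genuinely solve the $\mathcal{L}_a$-equation, which is precisely where exact scaling covariance of $\mathcal{L}_a$ and radiality are indispensable. Once that localized estimate and Lemma \ref{NormEq} are in hand, the orthogonality and decoupling steps are the classical ones and transfer with only cosmetic changes from \cite{kenig2006global,killip2017energy}.
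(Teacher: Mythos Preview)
The paper does not supply its own proof of this theorem: Theorem~\ref{cc} is quoted from the literature (Lemma~4.3 in \cite{kenig2006global}, Theorem~3.1 in \cite{killip2017energy}, and \cite{yang2020scattering}) and used as a black box, with only Remark~\ref{rm2} noting that radiality forces $x_{j,n}\equiv 0$. There is therefore no in-paper argument to compare against.

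Your outline is the standard Bahouri--G\'erard/Keraani profile decomposition as adapted by Kenig--Merle and then by Killip--Miao--Visan--Zhang--Zheng to $\mathcal{L}_a$, and it matches the strategy of those cited sources: an inverse Strichartz inequality built from Littlewood--Paley theory adapted to $\mathcal{L}_a$ (via heat-kernel bounds) and the Sobolev equivalence of Lemma~\ref{NormEq}, followed by inductive extraction, parameter orthogonality, and $\dot H^1_a$/energy decoupling. The key structural point you single out---that only dilations and time translations survive because $\mathcal{L}_a$ is not translation invariant, and that radiality prevents translation bubbles---is exactly the content of Remark~\ref{rm2} and the reason the paper restricts to radial data. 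So your proposal is a faithful sketch of what the references do; the only caveat is that the technical work you flag (the refined/inverse Strichartz estimate for $e^{it\mathcal{L}_a}$ and the $\mathcal{L}_a$-adapted square-function machinery) is genuinely nontrivial and is carried out in detail in \cite{killip2017energy}, not in this paper.
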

\begin{rmk}\label{rm2}
{\rm The original result \cite[Theorem 3.1]{killip2017energy} says we would get a sequence $\{x_{j,n}\}$ along with $\{\lambda_{j,n}\},\{t_{j,n}\}$. But due to the radial situation we can take $x_{j,n}=0$ for all $j,n$'s. }
\end{rmk}
\begin{prp}[Localized virial identity] \label{vi} Let $\phi \in C_0^{\infty}(\mathbb R^d), t \in [0, T_{+}(u_0)).$ Then for $u$ satisfying $i\partial_tu+\Delta u-Vu+|u|^{4/(d-2)}u=0$ we have
\begin{enumerate}
\item[(i)]$\frac{d}{dt} \int_{\rd} |u|^2 \phi  = 2 \text{Im} \int_{\rd} \bar{u} \nabla u\cdot \nabla \phi dx$
\item[(ii)] $\frac{d^2}{dt^2} \int_{\rd} |u|^2\phi  =4\sum_{i,j}\text{Re}\int_{\rd}\partial_{x_ix_j}\phi \partial_{x_i}u\partial_{x_j}\bar{u}-\int_{\rd}[ \Delta^2\phi+2\nabla\phi\cdot\nabla V]|u|^2-\frac{4}{d}\int_{\rd}\Delta\phi|u|^{2^*}.$
\end{enumerate}
\end{prp}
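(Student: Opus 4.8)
The plan is to obtain both identities by differentiating $M(t):=\int_{\rd}|u(t,x)|^2\phi(x)\,dx$ and integrating by parts, each time substituting from the equation written as $\partial_t u=i\big(\Delta u-Vu+|u|^{4/(d-2)}u\big)$; since $\phi\in C_c^\infty(\rd)$, no boundary terms ever appear.

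For part (i) I would start from $\partial_t\big(|u|^2\big)=2\,\mathrm{Re}\big(\bar u\,\partial_t u\big)$ and note that the contributions of $-Vu$ and $|u|^{4/(d-2)}u$ drop out, because $V|u|^2$ and $|u|^{2^*}$ are real, so that $\partial_t\big(|u|^2\big)=-2\,\mathrm{Im}\big(\bar u\,\Delta u\big)$. Integrating this against $\phi$ and performing a single integration by parts, the term $\int_{\rd}\phi\,|\nabla u|^2$ is real and hence drops out of the imaginary part, leaving $M'(t)=2\,\mathrm{Im}\int_{\rd}\bar u\,\nabla u\cdot\nabla\phi\,dx$, which is (i).

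For part (ii) I would differentiate (i) once more, insert the equation for $\partial_t u$ and for $\partial_t\bar u$, and split the resulting vector-valued integrand into a kinetic term $i\big(\bar u\,\nabla\Delta u-\Delta\bar u\,\nabla u\big)$, a potential term, and a nonlinear term. The potential term collapses, using $\nabla(Vu)=V\nabla u+u\nabla V$, to $-i|u|^2\nabla V$, which is $-i$ times a real vector and hence contributes $-2\int_{\rd}|u|^2\,\nabla\phi\cdot\nabla V$. For the nonlinear term, the algebraic identity $\bar u\,\nabla\big(|u|^{4/(d-2)}u\big)-|u|^{4/(d-2)}\bar u\,\nabla u=\tfrac{2}{d-2}|u|^{4/(d-2)}\nabla\big(|u|^2\big)=\tfrac{2}{d}\nabla\big(|u|^{2^*}\big)$ shows it too is $i$ times a real gradient, and one integration by parts turns it into $-\tfrac{4}{d}\int_{\rd}|u|^{2^*}\Delta\phi$. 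For the kinetic term I would use $\mathrm{Im}(iz)=\mathrm{Re}\,z$ and then integrate by parts twice so that two derivatives land on $\phi$; feeding in the pointwise identities $2\,\mathrm{Re}\big(\partial_{x_jx_k}\bar u\,\partial_{x_k}u\big)=\partial_{x_j}\big(|\nabla u|^2\big)$ and $2\,\mathrm{Re}\big(\bar u\,\Delta u\big)=\Delta\big(|u|^2\big)-2|\nabla u|^2$ makes the spurious $\int|\nabla u|^2\,\Delta\phi$ contributions cancel, and what survives is $4\sum_{i,j}\mathrm{Re}\int_{\rd}\partial_{x_ix_j}\phi\,\partial_{x_i}u\,\partial_{x_j}\bar u-\int_{\rd}\Delta^2\phi\,|u|^2$. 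Adding the three pieces gives (ii).

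The remaining points are essentially technical. The computation above is formal for a general $\dot{H}^1$ solution, so I would first run it for a smooth, rapidly decaying solution and then pass to the limit using the continuous dependence on the data from Proposition \ref{gsd}. The one step that genuinely needs care is the finiteness of $\int_{\rd}|u|^2\,\nabla\phi\cdot\nabla V$: for $V=-a/|x|^2$ one has $|\nabla V|\lesssim|x|^{-3}$ near the origin, but in the applications $\phi$ is taken quadratic near $0$, so $|\nabla\phi|\lesssim|x|$ there and $|\nabla\phi\cdot\nabla V|\lesssim|x|^{-2}$; Hardy's inequality then bounds $\int_{\rd}|u|^2|x|^{-2}$ by $\|u\|_{\dot{H}^1}^2$, and the analogous (easier) estimates handle every other integral, so all the expressions in the statement are well defined. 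The main obstacle, such as it is, lies in the bookkeeping of the kinetic integration by parts, where the $|\nabla u|^2$ terms must be tracked carefully so that they cancel.
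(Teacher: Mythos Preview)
Your computation is correct and is the standard derivation of the localized virial identities; the paper itself does not carry out this computation but simply refers to \cite[Lemma 7.2]{killip2013nonlinear}, where essentially the same integration-by-parts argument appears. Your additional remark that for $V=-a/|x|^2$ the term $\int|u|^2\nabla\phi\cdot\nabla V$ is only guaranteed to be finite when $\nabla\phi(0)=0$ (as in the actual application, where $\phi$ is a cutoff of $|x|^2$) is a pertinent observation that the paper does not make explicit.
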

\begin{proof}[{\bf Proof}]
See \cite[Lemma 7.2]{killip2013nonlinear} by Killip and Visan.
\end{proof}

Now let us give a shorthand notation to an $u_0\in\hd$ for which scattering happens:
\begin{dfn}
Let $u_0\in \dot{H}_a^1(\rd)$ with $\|u_0\|_{\dot{H}^1_a} < \|W_{a\vee0}\|_{\dot{H}^{1}_{a\vee0}}$ and $E_a(u_0)< E_{a\vee 0}(W_{a\vee 0})$. We say that $(SC)(u_0)$ holds, if the maximal interval $I$ of existence of the solution $u$ to \eqref{61} with initial data $u_0$ at $t_0$, is $\R$ and $\n u\n_{S(\R)}<\infty$.
\end{dfn}


Note that, because of Proposition \ref{gsd}, Strichartz and Sobolev inequality, if  $\|u_0\|_{\dot{H}^1_a} \leq \delta,$
 $(SC)(u_0)$ holds.
Thus, in light of Theorem \ref{ct}, there exists  $\eta_0 >0$   such that $\|u_0\|_{\dot{H}_a^1} < \|W_{a \vee 0}\|_{ \dot{H}^1_{a\vee0}}$, 
   $E_a(u_0)< \eta_0$,  then $(SC)(u_ 0)$ holds. 
Thus, there exists a number $E_C$,
with  $0<\eta_0 \leq E_C \leq E_{a\vee 0 }(W_{a\vee 0})$, such that, if $ \|u_0\|_{\dot{H}_a^1} < \|W_{a \vee 0}\|_{ \dot{H}^1_{a\vee0}}$ 
and  $E_a(u_ 0) < E_ C$, then $(SC)(u_ 0)$ holds and $E_C$ is optimal with this property.
Note that \begin{align*}
E_C=\sup\left\{E \in (0, E_{a \vee 0} (W_{a \vee 0})):\  \|u_0\|_{\dot{H}_a^1} < \|W_{a \vee 0}\|_{ \dot{H}^1_{a\vee0}} , E_a(u_0) < E  \ \Rightarrow  (SC)(u_0) \text{ holds}\right\}
\end{align*}
and  $E_C\leq E_{a\vee 0} (W_{a \vee 0})$. Assuming  $E_ C < E_{a\vee 0}(W_{a \vee 0} )$, we have existence of a critical solution with some compactness property, namely we have the following result:

\begin{prp}\label{ec}  Let $E_ C < E_{a\vee 0}(W_{a \vee 0} )$. Then there exists   $u_{0, C} \in \dot{H}^1(\rd)$ with
$$ E_a(u_{0, C})= E_{C} < E_{a\vee 0}(W_{a\vee 0}),\quad \|u_{0, C} \|_{\dot{H}^1_a} < \|W_{a\vee 0}\|_{\dot{H}^1_{a\vee0}}$$
such that,  if $u_C$ is the solution of  $(NLS)_a$ with initial data $u_{ 0,C}$ at $t=0$ and maximal interval of existence $I$, then $\|u_{C}\|_{S(I)}= \infty$. In addition $u_C$ has the following property:  If $\|u_{C}\|_{S(I_+)}= \infty$ then there exists a function $\lambda:I_+\to(0,\infty)$ such that the set 
$$K= \left\{ v(t,x): v(t,x) = \frac{1}{\lambda(t)^{(d-2)/2}}u_{C} \left(t,\frac{x}{\lambda(t)}\right) \right\}$$
has compact closure  in $\dot{H}^{1}(\rd)$. A corresponding conclusion is
reached if  $\|u_{C}\|_{S(I_-)}= \infty,$ where   $I_+=(0,\infty)\cap I,I_-= (-\infty, 0) \cap I.$
\end{prp}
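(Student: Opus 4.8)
The plan is to run the Kenig--Merle concentration--compactness argument of \cite{kenig2006global} closely, with the long time perturbation result Theorem \ref{ltp} (now available for the inverse-square potential under the standing hypothesis on $a$) replacing its $a=0$ analogue. First I would produce a minimizing sequence: since $E_C<E_{a\vee0}(W_{a\vee0})$, the optimality of $E_C$ together with the blow-up criterion (Lemma \ref{sbu}) gives, for each $n$, a datum $u_{0,n}\in\hd$ with $\|u_{0,n}\|_{\dot H^1_a}<\|W_{a\vee0}\|_{\dot H^1_{a\vee0}}$ and $E_C\le E_a(u_{0,n})<E_C+\tfrac1n$, whose maximal solution $u_n$ of \eqref{61} satisfies $\|u_n\|_{S(I_n)}=\infty$; by Proposition \ref{gsd} this forces $\|e^{it\la}u_{0,n}\|_{S(\R)}\ge\delta$, with $\delta$ the small-data threshold.

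Next I would apply the profile decomposition (Theorem \ref{cc}) to $\{u_{0,n}\}$, obtaining linear profiles $V^l_j=e^{it\la}V_{0,j}$, asymptotically orthogonal parameters $(\lambda_{j,n},t_{j,n})$, and remainders $w_n$ with $\|e^{it\la}w_n\|_{S(\R)}\to0$. Combining the $\dot H^1_a$- and energy-Pythagorean expansions of Theorem \ref{cc}(iii)--(iv) with the energy trapping of Theorem \ref{ct} shows that each profile and each $w_n$ is sub-threshold with nonnegative energy, and that these energies add up in the limit to $E_C$. To each $V^l_j$ I attach its nonlinear profile $U_j$ -- the solution of \eqref{61} with data $V_{0,j}$ at $t=0$ when $t_{j,n}/\lambda_{j,n}^2\equiv0$, and the global solution scattering to $V^l_j$ as $t\to\pm\infty$ when $t_{j,n}/\lambda_{j,n}^2\to\pm\infty$ (built near $\mp\infty$ from small-data theory applied to the Strichartz tail of $V^l_j$) -- and argue by contradiction: if there is more than one nontrivial profile, or the remainder does not vanish in $\dot H^1_a$, then each $U_j$ is sub-threshold with energy $<E_C$, hence scatters by the very definition of $E_C$ (the ones with $\|V_{0,j}\|_{\hd}$ small directly by Proposition \ref{gsd}); only finitely many carry non-negligible energy, so $\widetilde{u}_n:=\sum_{j\le J}\lambda_{j,n}^{-(d-2)/2}U_j\!\big(\tfrac{t-t_{j,n}}{\lambda_{j,n}^2},\tfrac{x}{\lambda_{j,n}}\big)+e^{it\la}w_n$ is an approximate solution of \eqref{61} with $\widetilde{u}_n(0)=u_{0,n}+o(1)$ and error tending to $0$ in the dual Strichartz norm (the nonlinear interactions decoupling via the orthogonality of $(\lambda_{j,n},t_{j,n})$), whence Theorem \ref{ltp} gives $\|u_n\|_{S(\R)}<\infty$ for $n$ large, contradicting the choice of $u_{0,n}$. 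Hence there is a single profile with vanishing remainder; taking $u_{0,C}$ to be the value at $t=0$ of the corresponding nonlinear profile ($u_{0,C}=V_{0,1}$ after rescaling $\lambda_{1,n}$ to $1$, in the case $t_{1,n}/\lambda_{1,n}^2\equiv0$), conservation and continuity of $E_a$ yield $E_a(u_{0,C})=E_C$, and $E_C<E_{a\vee0}(W_{a\vee0})$ together with Theorem \ref{ct}(i) yields $\|u_{0,C}\|_{\dot H^1_a}\le(1-\delta_1)\|W_{a\vee0}\|_{\dot H^1_{a\vee0}}<\|W_{a\vee0}\|_{\dot H^1_{a\vee0}}$. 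If the solution $u_C$ with data $u_{0,C}$ had finite $S$-norm on its maximal interval it would scatter, and then Theorem \ref{ltp} (with $\widetilde{u}=u_C$, $g=0$, $A'\to0$) would make $u_n$ scatter for large $n$; thus $\|u_C\|_{S(I)}=\infty$.

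For the compactness statement, assume $\|u_C\|_{S(I_+)}=\infty$ (the case of $I_-$ is symmetric), so $\|u_C\|_{S([\tau,\sup I_+))}=\infty$ for every $\tau\in I_+$. Given any $\{\tau_n\}\subset I_+$, the data $u_C(\tau_n)$ are (by conservation and Theorem \ref{ct}) uniformly bounded in $\hd$, sub-threshold, and of energy $E_C$, and Proposition \ref{gsd} forces $\|e^{it\la}u_C(\tau_n)\|_{S(\R)}\ge\delta$; running the previous paragraph on $\{u_C(\tau_n)\}$ -- the corresponding translated solutions again failing to scatter forward, after passing to a subsequence -- produces a single profile whose time parameter is $\equiv0$ (the escaping-time alternative being ruled out by non-forward-scattering of $u_C$, which would otherwise force $\|u_C\|_{S((-\infty,\tau_n])}\to0$, impossible once $\tau_n\to\sup I_+$), so $\lambda_{1,n}^{(d-2)/2}u_C(\tau_n,\lambda_{1,n}\,\cdot)\to V_{0,1}$ in $\hd$, i.e. $P(u_C(\tau_n))$ converges in $\hd/G$. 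Thus $\{P(u_C(t)):t\in I_+\}$ has compact closure in $\hd/G$; since $G=\{T_\lambda\}$ is a dislocation (Lemma \ref{16}), Theorem \ref{63} provides a compact $K\subset\hd$ with $P(K)=\overline{\{P(u_C(t)):t\in I_+\}}$, so for each $t\in I_+$ one can choose $\lambda(t)>0$ with $T_{\lambda(t)}u_C(t)\in K$, i.e. $v(t,\cdot)\in K$, and then $K$ (hence $\overline{\{v(t,\cdot):t\in I_+\}}$) is the desired compact set.

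\medskip
\noindent The step I expect to be the main obstacle is the nonlinear profile analysis: constructing the $U_j$ as global scattering solutions in the escaping-time cases, checking that $\widetilde{u}_n$ is an admissible approximate solution for Theorem \ref{ltp} (the asymptotic decoupling of the nonlinear interactions and the uniform control of the tail of small profiles), and keeping the sub-threshold/energy bookkeeping of Theorems \ref{cc}(iii)--(iv) and \ref{ct} sharp enough to apply the optimality of $E_C$ profile-by-profile. Everything here mirrors \cite{kenig2006global}; the one genuinely new input is that Theorem \ref{ltp} -- hence the restriction $a<\big(\tfrac{d-2}{2}\big)^2-\big(\tfrac{d-2}{d+2}\big)^2$ (cf. Remark \ref{rm3}) -- is now available in the presence of the inverse-square potential.
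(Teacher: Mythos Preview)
Your proposal is correct and follows essentially the same approach as the paper: the paper's proof simply refers to \cite[Propositions 4.1 and 4.2]{kenig2006global} together with Theorems \ref{ltp}, \ref{cc}, \ref{63} and Lemma \ref{16}, and what you have written is precisely a detailed unpacking of that argument, with the same ingredients in the same roles. In particular, your use of Theorem \ref{63} and the dislocation group of Lemma \ref{16} to pass from compactness in $\hd/G$ to the existence of $\lambda(t)$ matches the paper's indication exactly.
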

\begin{proof}[{\bf Proof}]
The existence of $u_C$ follows exactly in the same way as in \cite[Proposition 4.1]{kenig2006global} once we have Theorems \ref{ltp} and \ref{cc}. For the existence of $\lambda$ we go in the way of proof of \cite[Proposition 4.2]{kenig2006global} along with Theorem \ref{63} (with $G$ defined as Lemma \ref{16} and $H=\dot{H}^1(\rd)$).
\end{proof}

Now we have the following rigidity result:
\begin{prp}\label{62}
Let $u_0\in\hd$ such that
$ E_a(u_0) < E_{a\vee 0}(W_{a\vee 0}),\  \|u_0 \|_{\dot{H}^1_a} < \|W_{a\vee 0}\|_{\dot{H}^1_{a\vee0}}$ and  $u$ be the solution to \eqref{61} with $u(0,\cdot)=u_0$. Assume there exists a function $\lambda:I_+\to(0,\infty)$ such that the set 
$$K= \left\{ v(t,x): v(t,x) = \frac{1}{\lambda(t)^{(d-2)/2}}u\left(t,\frac{x}{\lambda(t)}\right) \right\}$$
has compact closure  in $\dot{H}^{1}(\rd)$. Then $u=0$.
\end{prp}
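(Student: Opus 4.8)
The plan is to run the Kenig--Merle rigidity argument (\cite[Section~5]{kenig2006global}), checking that the inverse-square potential introduces no new difficulty. Arguing by contradiction, I would suppose $u\not\equiv0$. First I would record the uniform bounds: by conservation of energy and Theorem~\ref{ct}(iii), the quantity $\|u(t)\|_{\dot H^1_a}^2$ is bounded above and below by positive constants independent of $t$, say $\|u(t)\|_{\dot H^1_a}^2\ge c_0>0$, and $\|u(t)\|_{\dot H^1_a}\simeq\|u(t)\|_{\dot H^1}$; then Theorem~\ref{ct}(ii) gives the \emph{coercivity} $\int_{\rd}\bigl(|\nabla u|^2+a|u|^2/|x|^2-|u|^{2^*}\bigr)\,dx\ge c\,c_0>0$ for all $t$. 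Next I would convert the precompactness of $\overline K$ into uniform spatial tightness: for every $\eta>0$ there is $R_0(\eta)$ so that, after undoing the scaling $v(t,x)=\lambda(t)^{-(d-2)/2}u(t,x/\lambda(t))$, one has $\int_{|x|\ge R_0(\eta)/\lambda(t)}\bigl(|\nabla u|^2+|u|^2/|x|^2+|u|^{2^*}\bigr)\,dx\le\eta$ for all $t\in I_+$.

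The core step is the localized virial identity, Proposition~\ref{vi}(ii). I would take a radial cutoff $\phi_R(x)=R^2\psi(x/R)$ with $\psi(x)=|x|^2$ near the origin and $\psi$ compactly supported, and set $z_R(t)=\int_{\rd}|u(t,x)|^2\phi_R(x)\,dx$. On $\{|x|\le R\}$ we have $\partial^2_{ij}\phi_R=2\delta_{ij}$, $\Delta\phi_R=2d$, $\Delta^2\phi_R=0$, $\nabla\phi_R=2x$, and $\nabla V=-2a\,x/|x|^4$, so the identity collapses to $z_R''(t)=8\int_{|x|\le R}\bigl(|\nabla u|^2+a|u|^2/|x|^2-|u|^{2^*}\bigr)\,dx+\mathrm{Err}_R(t)$. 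The key algebraic observation is that the term $-2\nabla\phi_R\cdot\nabla V$ is exactly what upgrades $|\nabla u|^2$ to the $\dot H^1_a$-density $|\nabla u|^2+a|u|^2/|x|^2$, precisely because $V$ is homogeneous of degree $-2$; this is the combination controlled by coercivity, and it is why no sign condition on $a$ beyond the subcriticality in the hypotheses is needed. The error $\mathrm{Err}_R$ is supported in the annulus $\{R\le|x|\le2R\}$ and is bounded by $\int_{|x|\ge R}\bigl(|\nabla u|^2+|u|^2/|x|^2+|u|^{2^*}\bigr)$ (using $R^{-2}\int_{|x|\sim R}|u|^2\lesssim\|u\|_{L^{2^*}(|x|\ge R)}^2$ from Sobolev/H\"older). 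Combining the coercivity lower bound with the tightness estimate, I would fix $\eta$ small depending on $c_0$ and conclude $z_R''(t)\ge\tfrac{1}{2}c\,c_0>0$ for all $t\in I_+$, \emph{provided} $R\ge R_0(\eta)/\lambda(t)$ uniformly in $t$.

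Finally I would close the argument. When $I_+=(0,\infty)$ and $\inf_{t>0}\lambda(t)>0$, choose $R$ large enough that $R\ge R_0(\eta)/\inf\lambda$; then $z_R$ is nonnegative and convex with $z_R''\ge\tfrac{1}{2}c\,c_0$, forcing $z_R(t)\to\infty$, while $z_R(t)\le CR^2\int_{|x|\le2R}|u(t)|^2\le CR^4\|u(t)\|_{\dot H^1}^2$ stays bounded --- a contradiction, so $u\equiv0$. The two remaining scenarios, namely finite-time blow-up $T_+(u_0)<\infty$ (where precompactness forces $\lambda(t)\to\infty$ as $t\uparrow T_+$, handled by the self-similar truncated-virial argument) and $I_+=(0,\infty)$ with $\liminf_{t\to\infty}\lambda(t)=0$, are excluded exactly as in \cite[Section~5]{kenig2006global}, with the potential term absorbed into $\dot H^1_a$ at each use of Proposition~\ref{vi} just as above. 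The main obstacle I anticipate is the bookkeeping of the scaling function $\lambda(t)$ --- making the truncation radius $R$ independent of $t$, and disposing cleanly of the $\lambda\to0$ and $\lambda\to\infty$ regimes; once the algebra showing that $V=a/|x|^2$ fits the $\dot H^1_a$-structure is in place, the rest is the Kenig--Merle machinery essentially verbatim.
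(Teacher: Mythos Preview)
Your proposal is correct and takes essentially the same approach as the paper: the paper's own proof is a one-line pointer, stating that the argument is that of \cite[Proposition~5.3]{kenig2006global} once Theorem~\ref{ct} (coercivity) and Proposition~\ref{vi} (localized virial) are in hand, and this is precisely the machinery you outline. Your added observation---that the term $-2\nabla\phi_R\cdot\nabla V$ in Proposition~\ref{vi}(ii) combines with the gradient term to produce exactly the $\dot H^1_a$-density controlled by Theorem~\ref{ct}(ii), because $V$ is homogeneous of degree $-2$---is the one point where the inverse-square potential requires a word of justification beyond \cite{kenig2006global}, and you have identified it correctly.
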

\begin{proof}[{\bf Proof}]
The proof is similar to that of  \cite[Proposition 5.3]{kenig2006global} once we have Theorem \ref{ct} and Proposition \ref{vi}.
\end{proof}
\begin{proof}[{\bf Proof of Theorem \ref{mt}}]
Note that Theorem \ref{mt} is the assertion $E_C= E_{a\vee 0} (W_{a \vee 0})$. If not assume $E_C< E_{a\vee 0} (W_{a \vee 0})$. By Proposition \ref{ec} we have existence of a minimal solution $u_C$ satisfying the assumption of Proposition \ref{62}.
Applying Proposition \ref{62} to $u_C$ we conclude that $u_C=0$ which is a contradiction as we had $\n u_C\n_{S(I)}=\infty$ from Proposition \ref{ec}. 
 \end{proof}
\begin{rmk}
{\rm The non-radial data can also be dealt in this technique provided one can bound the sequence $\{x_{1,n}\}$ in concentration compactness result, see Remark \ref{rm2} and Theorem \ref{cc}. In fact this is proved in \cite{yang2020scattering} for dimension $d=4,5$.
However, the non-radial case in dimension $d=3$ is still open.}
\end{rmk}

\subsection*{Acknowledgement}I am thankful to Sandeep K. and D. Bhimani for various help and suggestions. 
I thank M. Milman for helping me to find the reference \cite{janson1988interpolation}.
  I am also thankful to the unknown referee for valuable comments and suggestions.
\bibliographystyle{siam}
\bibliography{str}
\end{document}